\newcommand{\IFTHENELSE}[3]{%
    \STATE\algorithmicif\ {#1}\ \algorithmicthen\ {#2},\ \algorithmicelse\ {#3}.%
}
\newtheorem{assume}{Assumption}
\newtheorem{prop}{Proposition}
\newtheorem{thm}{Theorem}
\newtheorem{cor}{Corollary}
\newcommand{\paragraphtitle}[1]{\paragraph{#1}}
\newcommand{\Q}{}
\newcommand{\Qn}{}
\newcommand{\EST}{\text{EST}}
\newcommand{\PDual}{\text{PD}}
\newcommand{\OPT}{\text{OPT}}
\newcommand{\MLE}{\text{mle}}
\newcommand{\lfrac}[2]{{(#1)}/{(#2)}}   
\newcommand{\M}{{\scriptscriptstyle M}}
\begin{document}

\TITLE{Yield Optimization of Display Advertising with Ad Exchange}
\RUNTITLE{Yield Optimization with Ad Exchange}

\ARTICLEAUTHORS{%
\AUTHOR{Santiago Balseiro}
\AFF{Graduate School of Business, Columbia University, New York, NY 10027, \EMAIL{srb2155@columbia.edu}} 
\AUTHOR{Jon Feldman, Vahab Mirrokni, S. Muthukrishnan}
\AFF{Google Research, New York, NY 10011, \EMAIL{jonfeld@google.com}, \EMAIL{mirrokni@google.com}, \EMAIL{muthu@google.com}}
} 

\RUNAUTHOR{Balseiro et al.}

\ABSTRACT{%
\begin{center}\normalsize September 2011\\[1ex] \end{center}

It is clear from the growing role of Ad Exchanges in the real-time sale of advertising slots that web publishers are considering a new alternative to their more traditional reservation-based ad contracts.  To make this choice, the publisher must trade off, in real-time, the short-term revenue from an Ad Exchange with the long-term benefits of delivering good quality spots to the reservation ads.

In this paper, we formalize this combined optimization problem as a stochastic control problem and derive an efficient policy for online ad allocation in settings with general joint distribution over placement quality and exchange prices. We prove asymptotic optimality of this policy in terms of any arbitrary trade-off between quality of delivered reservation ads and revenue from the exchange, and provide a rigorous bound for its convergence rate to the optimal policy.  We also give experimental results on data derived from real publisher inventory, showing that our policy can achieve any Pareto-optimal point on the quality vs. revenue curve.
}
\maketitle

\section{Introduction}

Internet Display Advertising refers generally to the graphical and video ads that are now ubiquitous on the web. These types of ads generated about 10 billion dollars in the US in 2010, and analysts see a clear rising trend~\citep{IAB2011}.  Traditionally, an advertiser would buy display ad placements by negotiating deals directly with a publisher (the owner of the web page), and signing an agreement, called a guaranteed contract. These deals usually take the form of a specific number of ad impressions reserved over a particular time horizon (e.g., one million impressions over a month). A publisher can make many such deals with different advertisers, with potentially sophisticated relationships between the advertisers' targeting criteria. The publisher would then need to assign arriving impressions to the matching reservations so as to maximize the \emph{placement quality} of the contracts. Typically, the probability that a user clicks on an ad (known as {\em click-trough rate}) is used as a metric of placement quality.

Guaranteed contracts can suffer in efficiency: since slots are booked in advance, both parties cannot react to instantaneous changes to traffic patterns or market conditions. However, this has changed in the last couple of years. Advertisers may now purchase ad placements through spot markets for the real-time sale of online ad slots, called Ad Exchanges. Prominent examples of exchanges are Yahoo's RightMedia, Microsoft's AdECN, Google's DoubleClick and OpenX. While exchanges differ in their implementations, in a generic Ad Exchange (AdX)~\citep{Muthu2009}, publishers post an ad slot with a reservation price, advertisers post bids, and an auction is run; this happens between the time a user visits a page and the ad is displayed. Ad exchanges allow advertisers to bid in real time and pay only for valuable customers, instead of bulk buying impressions and targeting large audiences.


In presence of ad exchanges, publishers face the problem of maximizing the overall placement quality of the impressions assigned to the reservations together with the total revenue obtained with AdX, while complying with the contractual obligations. Note that these two objectives are potentially conflicting; in the short-term, the publisher might boost the revenue stream from AdX at the expense of assigning lower quality impressions to the advertisers. In the long term, however, it may be convenient for the publisher to prioritize her advertisers in view of attracting future contracts. So for a given piece of ad inventory, the publisher must quickly decide whether to send the inventory to AdX (and at what price), or to assign it to an advertiser with a reservation.

In this paper, we study the problem faced by the publisher, jointly optimizing over AdX and the reservations.  We bring to bear techniques from revenue management and stochastic optimal control, perform a probabilistic modeling of the problem, and derive an efficient policy for making real-time ad allocation decisions. We prove that our policy is asymptotically optimal in terms of an arbitrary (i.e., publisher-defined) trade-off between quality delivered to reservation ads and revenue from the exchange.  Our policy and analysis is quite general, and works for any joint distribution over placement quality and exchange prices, even allowing correlation between advertisers, or between quality and exchange prices. In particular, we provide a rigorous bound on the convergence rate of our policy to the optimal policy (Theorem~\ref{thm:main}). Typically ad allocation research compares to the optimal offline policy in hindsight; instead, we compare our policy with an optimal online policy, obtaining a bound on additive regret, as in online machine learning.


Since the optimal policy cannot be computed efficiently in most real-world problems, we derive a provably good policy which resembles a bid-price control but extended with a pricing function to take into account for AdX. Our policy assigns each guaranteed contract a bid-price (or dual variable), which may be interpreted as the opportunity cost of assigning one additional impression to the advertiser. When a user arrives, the pricing function quotes a reserve price to submit to the exchange that depends on the opportunity cost of assigning the impression to an advertiser. If AdX price does not exceed this reserve price, the impression is immediately assigned to the advertiser whose placement quality exceeds its opportunity cost by the largest amount. We also give experimental results on data derived from real publisher inventory, showing that our policy can achieve any Pareto-optimal point on the quality vs. revenue curve.

\subsection{Related Work and Contributions}

Our works draws on three streams of literature, namely, that of Display Advertising, Revenue Management, and Online Allocation. Rather than attempting to exhaustively survey the literature on each area, we focus on the work more closely related to ours.

\paragraph{Display Advertising.}
There has been recent work on display ad allocation with both contract-based advertisers and spot market advertisers. \citet{GMPV09} focus on ``fair'' representative bidding strategies in which the publisher bids on behalf of the contract-based advertisers competing with the spot market bidders. This line of work is mainly concerned with computing such fair representative bidding strategies for contract-based advertisers. \cite{Chen2011} considers the case when the publisher runs the exchange, and employing a mechanism design approach he characterizes, through dynamic programming, the optimal dynamic auction for the spot market. In this model both bids from the spot market and the total number of impressions are stochastic. We focus, instead, on combined yield optimization and present a model and an algorithm taking into account any trade-off between quality delivered to reservation ads and revenue from the spot market.

\cite{Yang2010} studied the problem faced by the publisher of allocating between the two markets using multi-objective programming. As in our work, they consider different objectives for the publisher, such as, minimizing the penalty of under-delivery, maximizing the revenue from the spot market and the representativeness of the allocation. However, they employ a deterministic model with no uncertainty in which future inventory and contracts are nodes in a bipartite graph. \cite{Alaei2009} proposed an utility model that accounts for two types of advertisers: one oriented towards campaigns and seeking to create brand equity, and the other oriented towards the spot market and seeking to transform impressions to sales. Here impressions are commodities which can be assigned interchangeably to any advertisers. In this setting they look for offline and online algorithms aiming to maximize the utility of their contracts of the allocation. \cite{RoelsFridgeirsdottir2009} studied the scheduling problem in display advertising in the case without the exchange. In this paper the publisher needs to decide, as new contracts arrive, whether to accept them or not, and then dynamically deliver arriving impressions to them. They take into account uncertainty both in supply and demand, provide a dynamic programming formulation, and propose a certainty-equivalent control.



\paragraphtitle{Revenue Management.} Another stream of relevant work is that of Revenue Management (RM). Even though RM is typically applied to airlines, car rentals, hotels and retailing~\citep{TalluriVanRyzin2004}, our problem formulation and analysis is inspired by RM techniques. As in the prototypical RM problem, we look for a policy maximizing the {\em ex-ante} expected revenue, which can be obtained using dynamic programming (DP). Since the resulting DP is intractable, we aim for a deterministic version in which stochastic quantities are replaced by their expect values and quantities assumed to be continuous. These are common in the literature~\citep{GallegoVanRyzin1994, LiuVanRyzin2008}, and provide policies with provably good performance. Indeed, we show that our policy is asymptotically optimal.

The Display Ad problem can be thought of as a parallel-flight Network RM problem (see, e.g., \cite{TalluriVanRyzin1998}) in which users' click probabilities are requests for itineraries, and advertisers are edges in the network. The are three differences, however, with the traditional Network RM problem. First, we aim to satisfy all contracts, or completely deplete all resources by the end of the horizon. Second, in the traditional problem requests are for only one itinerary (which can be accepted or rejected), while in our model each impression can be potentially assigned to any contract and the publisher needs to decide whom to assign the impression based on possibly correlated placement qualities. Finally, the publishers in display advertising may submit impressions to a spot market to increase their revenues.

\begin{table}
    \centering
    \def\arraystretch{1.4}
	\begin{tabular}{c|c|c}
		& \textbf{\quad Network RM \quad } & \textbf{Display Ads} \\ \hline
		Resources & Legs (edges) & Contracts  \\ 
		Constraints & $\le$ & $ = $ \\ 
		Objective & Fares & Placement Quality \\ 
		Decision & Accept/Reject & Decide whom to assign to \\ 
		Spot market  & No & Yes
	\end{tabular}
    \caption{Comparison of the Display Ad and Network Revenue Management problems.}
\end{table}

A popular method for controlling the sale of inventory in revenue management applications is the use of bid-price controls. These were originally introduced by~\cite{Simpson1989}, and thoroughly analyzed by~\cite{TalluriVanRyzin1998}. In this setting, a bid-price control sets a threshold or bid price for each advertiser, which may be interpreted as the opportunity cost of assigning one additional impression to the advertiser.  This approach is standard in the context of revenue maximization, e.g. the stochastic knapsack problem by~\cite{LeviRadovanovic2010}. From this perspective, our contribution is the inclusion of a spot market, the exchange, as an new sales channel. In this case, our policy is a suitable modification of a bid-price control that takes into account AdX by incorporating a pricing function. This pricing function quotes a reserve price to submit to the exchange that depends on the opportunity cost of assigning the impression to an advertiser.

\begin{new}
Our results can be alternatively interpreted as the publisher bidding on behalf of the guaranteed contracts in a sequence of repeated auctions run by the exchange. Here, users visiting the web-page are a scarce resource and advertisers compete to show an ad in the users' screen. The pricing function and the bid-prices determine a bid for the contracts, which competes with the other bids in the exchange. This bid takes into account the value, in terms of placement quality, perceived by the reservation advertisers together with the capacity of the contracts. In this dual interpretation of the problem the spot market lies in the spotlight, while the guaranteed contracts are pushed to the background. Most publishers, however, aim first to fulfill their reservations and then submit their remnant inventory to the exchange. Thus, given the current state of the industry we believe that the first interpretation is more appealing.
\end{new}

\paragraphtitle{Online Allocation.} Our work is closely related to online ad allocation problems, including the {\em Display Ads Allocation (DA)} problem~\citep{Feldman2009, Feldman2010, AWY09, Vee2010}, and the {\em AdWords (AW)} problem~\citep{Mehta2007, DevenurHayes2009}. In both of these problems, the publisher must assign online impressions to an inventory of ads, optimizing efficiency or revenue of the allocation while respecting pre-specified contracts.

In the DA problem, advertisers demand a maximum number of eligible impressions, and the publisher must allocate impressions that arrive online to them. Each impression has a potentially different value for every advertiser. The goal of the publisher is to assign each impression to one advertiser maximizing the value of all the assigned impressions. The adversarial online DA problem was considered in~\cite{Feldman2009}, which showed that the problem is inapproximable without exploiting {\em free disposal}; using this property (that advertisers are at worst indifferent to receiving more impressions than required by their contract), a simple greedy algorithm is $1\over 2$-competitive, which is optimal. When the demand of each advertiser is large, a $(1-{1\over e})$-competitive algorithm exists~\citep{Feldman2009}, and it is tight. The stochastic model of the DA problem is more related to our problem. Following a training-based dual algorithm by \cite{DevenurHayes2009},  training-based $(1-\epsilon)$-competitive algorithms have been developed for the DA problem and its generalization to various packing linear programs~\citep{Feldman2010, Vee2010, AWY09}. 

In the AW problem, the publisher allocates impressions resulting from search queries. Here each advertiser has a budget on the total spend instead of a bound on the number of impressions. Other than training-based dual algorithms and primal-dual algorithms that get similar bounds as in the DA problem~\citep{DevenurHayes2009}, online adaptive optimization techniques have been applied to online stochastic ad allocation~\citep{srikant}. Such control-based adaptive algorithms achieve asymptotic optimality following an updating rule inspired by the primal-dual algorithms.

Our work differs from all the above in three main aspects: (i) We study both the parametric and non-parametric models, and compare their effectiveness in terms of the size of the sample sizes---both analytically for various distributions and experimentally on real data sets. (ii) Instead of using the framework of competitive analysis and comparing the solution with the optimum solution in hindsight, we compare the performance of our algorithm with the optimal online policy, and present a rate of convergence bound under this model. This is akin to regret bounds found in online Machine Learning; and (iii) None of the above work considers the simultaneous allocation of reservation ads and ads from AdX. In particular, these previous works do not consider the trade-off between the revenue from a spot market based on real-time bidding and the efficiency of reservation-based allocation.


It is tempting to simply reduce to online stochastic packing by considering the AdX as just another ``advertiser.'' The problem with this is that it does not allow adjusting the reserve price, or allocating a reservation advertiser if the AdX rejects.  In fact one can make such a reduction go through by considering online decisions on pairs of (reserve price, advertisers), and formalize the problem as an online allocation problem with general packing constraints. After a couple more steps, one can apply the techniques of ~\cite{DevenurHayes2009,Feldman2010,Vee2010,AWY09} to derive an online algorithm for the combined problem. However this approach discretizes the price space into multiples of $\delta$; thus (a) we lose $1+\delta$ in the yield, (b) we increase the running time by a factor of $1\over \delta$, and (c) for the the competitiveness proof to hold, we need more stringent conditions on the size of the weights. 
In addition, there is no clear way to apply the parametric technique. The method presented in this work not only avoids this dependence, it is a much more natural, extensible solution to the problem.


\section{Model}\label{sec:model}
Consider a publisher displaying ads in a web page. The web page has a single slot for display ads, and each user is shown at most one impression per page. The publisher has signed contracts with a set $A$ of advertisers guaranteeing them a certain number of targeted impressions within a given time horizon. We denote by $\mathcal{A} = \{1,\ldots,A\}$ the set of advertisers.

Even though the number of users visiting a web page is uncertain, publishers usually have fairly good estimates of the total number of expected users that arrive in a given horizon. In this model we index time based on the arrival of each user, and assume that the total number of users is fixed and equal to $N$. We do allow users to have different characteristics (random number of users can be accommodated in our model by considering dummy arrivals). Indeed, depending on the user profile, the impression may be more or less attractive for different advertisers.



We assume that the $n$-th impression is endowed with a vector of placement qualities $Q_n = \{ Q_{n,a} \}_{a \in \mathcal{A}}$, where $Q_{n,a}$ is the predicted quality advertiser $a$ would perceive if the impression is assigned to her. Qualities lie in some compact space $\Omega \subseteq \mathbb{R}^A$. A typical measure of placement quality is the estimated probability that the user click on each ad. In practice, such measure of quality is learned using, for example, logistic regression. Here we abstract from the learning problem and assume that qualities $\{Q_n\}_{n=1,\ldots,N}$ are random and drawn independently from some joint c.d.f $G(\cdot)$. We do allow, however, for qualities to be jointly distributed across advertisers. This captures the fact that advertisers might have similar target criteria, and hence the qualities perceived might be correlated. We do not impose any further restrictions on the qualities, other than finite second moments. Notice that the publisher observes the realization of the placement quality before showing the ad.

The publisher has agreed to deliver exactly $C_a$ impressions to advertiser $a \in \mathcal{A}$; neither over-delivery nor under-delivery is allowed. We denote by $\rho = \{\rho_a\}_{a \in \mathcal{A}}$ with $\rho_a = \frac {C_a} N$, the capacity to impression ratio of each advertiser. Note that a necessary condition for the feasibility of the operation is that the number of arriving impressions suffices to satisfy the contracts, or $\sum_{a \in \mathcal{A}} \rho_a \le 1$. An assumption of this general model is that any user can be potentially assigned to any advertiser. In practice each advertiser may be interested in a particular group of user types. It is important to note that this is not a limitation of our results, but rather a modeling choice; in \S\ref{sec:data-model} we show how to handle targeting criteria by setting $Q_{n,a} = -\tau_a$ for impressions not matching the targeting criteria of an advertiser.  This can also be interpreted as forcing the publisher to pay a good will penalty $\tau_a$ to the advertisers each time an undesired impression is incorrectly assigned.

Arriving impressions may either be assigned to the advertisers, discarded or auctioned in the Ad Exchange (AdX) for profit.  In a general AdX~\citep{Muthu2009}, the publisher contacts the exchange with a minimum price she is willing to take for the slot. Additionally, the publisher may submit some partial information of the user visiting the website. For simplicity, we first assume that no information about the user is revealed. However, in section~\ref{sec:AdX-user-info} we relax this assumption. Internally the exchange contacts different ad networks, and in turn they return bids for the slot. The exchange determines the winning bid among those that exceed the reserve price via an auction, and returns a payment to the publisher. In this case we say that the impressions is \emph{accepted}, and the publisher is contractually obligated to display the winning impression. In the case that no bid attains the reserve price, no payment is made and the impression is \emph{rejected}. We present the formal model of the exchange in Section~\ref{sec:AdX}.  The entire operation above is executed before the page is rendered in the user's screen.
Thus, in the event that the impression is rejected by the exchange, the publisher may still be able to assign it to some advertiser. Figure \ref{fig:decisions} summarizes the decisions involved.

\begin{figure}[t]
\centering
\includegraphics[width=0.6\textwidth]{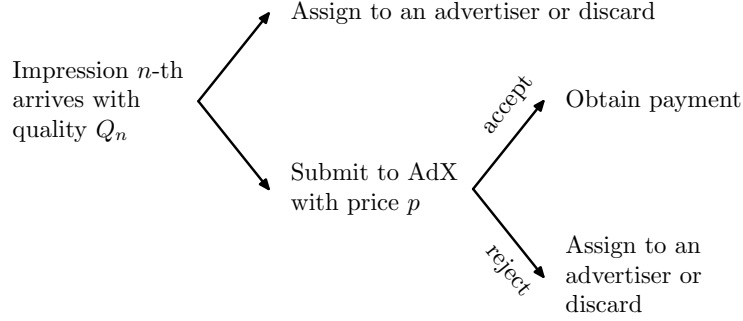}
\caption{Publisher's decision tree for a new impression.}\label{fig:decisions}
\end{figure}

For notational simplicity we extend the set of advertisers to $\mathcal{A}_0 = \{0\} \cup \mathcal{A}$ by including an outside option $0$ that represents discarding an impression.  We set the quality of the outside option identically to zero, i.e. $Q_{n,0} = 0$ for all impressions $n = 1,\ldots,N$. In the following, the terms discarding an impression or assigning it to advertiser $0$ are used interchangeably. We set $\rho_0 = 1 - \sum_{a\in\mathcal{A}}\rho_a$ to be the fraction of impressions that are not assigned to any advertiser. To wit, a fraction of the $\rho_0$ impressions will be assigned to the winning impression of AdX, and the remainder effectively discarded.

\subsection{Objective}\label{sec:obective}
The publisher's problem is to maximize the overall placement quality of the impressions assigned to the advertisers together with the total revenue obtained with AdX, while complying with the contractual obligations. Note that the objectives are potentially conflicting; in the short-term, the publisher might boost the revenue stream from AdX at the expense of assigning lower quality impressions to the advertisers. In the long term, however, it may be convenient for the publisher to prioritize her advertisers, in view of attracting future contracts.

We attack the multi-objective problem by taking a weighted sum of both objectives. The publisher has at her disposal a parameter $\gamma$, which allows her to trade-off between these conflicting objectives. The aggregated objective is given by
$$
    \text{yield} = \text{revenue(AdX)} + \gamma \cdot \text{quality(advertisers)},
$$
Hence, by choosing a suitable large $\gamma$ the advertisers may focus on assigning high quality impressions to the advertisers; while a small $\gamma$ would prioritize the revenue from AdX (the publisher may set different values of the parameter for each advertiser). Without loss of generality, we set $\gamma = 1$ for the remainder of this paper, except when noted otherwise.

\begin{new}
By adjusting the tradeoff parameter $\gamma$ the publisher is able to construct the Pareto efficient frontier of attainable revenue from the AdX and quality for the advertisers. In Section XX we show this curve for real publisher data.
\end{new}

Alternatively, the publisher might impose that the overall quality of the impressions assigned to the advertiser is greater than some threshold, and then maximize the total revenue obtained from AdX; this may have a more natural interpretation for some publishers, and would be simpler than having to set $\gamma$.  We can model this simply by interpreting $\gamma$ as the Lagrange multiplier of the quality of service constraint, and our problem as the Lagrange relaxation of the constrained program. In \S\ref{sec:quality-constraints} we analyze the implications of this formulation, and in \S\ref{sec:impact-adx} we study experimentally the impact of the choice of $\gamma$ on both objectives.

\subsection{AdX Model} \label{sec:AdX}
The publisher submits an impression to AdX with the minimum price
it is willing to take, denoted by $p\ge0$. The impression is accepted
if there is a bid of value $p$ or more. We
denote by $B$ the winning bid random variable. In the following we
assume that bids are independent of the quality of the impression, and
identically distributed according to a c.d.f. $F(\cdot)$. Hence, the
impression is accepted with probability $1-F(p) = \bar{F}(p)$. In this
first model, when the impression is accepted, the publisher is paid
the minimum price $p$. In Sections~\ref{sec:AdX-multiple-bidders} and~\ref{sec:AdX-user-info} we drop this assumption and consider a more general second-price auction with side information.

Suppose the publisher has computed an {\em opportunity cost} $c$ for selling this inventory in the exchange; that is, the publisher stands to gain $c$ if the impression is given to a reservation advertiser.Given opportunity cost $c\ge0$ the publisher picks the price that maximizes its expected revenue. Hence, the publisher solves the optimization problem $R(c) = \max_{p\ge0} \bar{F}(p) p + F(p) c$. Changing variables, we can define $r(s) = s \bar{F}^{-1}(s)$ to be the expected revenue under acceptance probability $s$, and rewrite this as
\begin{align} \label{R(c)}
R(c) &= \max_{s \in [0,1]} r(s) + (1 - s) c.
\end{align}
Also, let $s^*(c)$ be the least maximizer of \eqref{R(c)}, and $p^*(c) = \bar{F}^{-1} \left(s^*(c)\right)$ be the price that verifies the maximum.

\begin{assume}\label{regular} The expected revenue under survival probability $s$ is continuous, concave, non-negative, bounded, and satisfies $\lim_{s \rightarrow 0} r(s) = 0$. We call a function $r(s)$ that satisfies all of the assumptions above a \emph{regular revenue function}.
\end{assume}

These assumptions are common in RM literature (see, e.g., \cite{GallegoVanRyzin1994}). A sufficient condition for the concavity of the revenue is that $B$ has increasing generalized failure rates~\citep{Lariviere2006}. Regularity implies, among other things, the existence of a \emph{null price} $p_\infty$ such that $\lim_{p \rightarrow p_\infty} \bar{F}(p) p = 0$. Additionally, it allows us to characterize the value function $R(c)$. In \S\ref{sec:AdX-multiple-bidders} we show that $r(s)$ remains regular in the presence of multiple bidders in the AdX by considering the joint density of the highest and second-highest bids. Thus, all our results hold in this case too.


\begin{prop} \label{thm:revenue-regularity}
Suppose that $r(s)$ is regular revenue function. Then, $R(c)$ is non-decreasing, convex, continuous, and $R(c) \ge c$. Additionally, $R(c) - c$ is non-increasing, $s^*(c)$ is non-increasing, and $p^*(c)$ is non-decreasing.
\end{prop}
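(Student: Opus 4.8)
The plan is to read off every claim from the change of variables in~\eqref{R(c)}, which exhibits $R(c)$ as the pointwise maximum over the compact set $s \in [0,1]$ of the family $c \mapsto r(s) + (1-s)c$, each member of which is \emph{affine} in $c$ with non-negative slope $1-s$. Because $r$ is continuous (Assumption~\ref{regular}) and $[0,1]$ is compact, the maximum is attained for every $c \ge 0$, so the set of maximizers is nonempty and closed and $s^*(c)$ is well defined as its minimum. Convexity of $R$ is then immediate from the ``maximum of affine functions'' structure, and continuity follows from Berge's maximum theorem, since the objective is jointly continuous in $(s,c)$ and the feasible set is a fixed compact. Monotonicity of $R$ is obtained termwise: for $c_1 \le c_2$ and any fixed $s$ we have $r(s) + (1-s)c_1 \le r(s) + (1-s)c_2$ because $1 - s \ge 0$, and taking the maximum over $s$ gives $R(c_1) \le R(c_2)$. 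The bound $R(c) \ge c$ follows by bounding the maximum in~\eqref{R(c)} below by its value as $s \to 0$, which equals $c$ since $r(s) \to 0$ by Assumption~\ref{regular}.

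Next I would rewrite $R(c) - c = \max_{s \in [0,1]}\{ r(s) - s c\}$. Each member $c \mapsto r(s) - sc$ is non-increasing (slope $-s \le 0$), so the same termwise argument shows $R(c) - c$ is non-increasing; it is also convex, again as a maximum of affine functions.

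The one genuinely non-routine step is the monotonicity of $s^*(c)$, and this is where I expect the main (mild) difficulty, precisely because~\eqref{R(c)} may have multiple maximizers at the breakpoints of the piecewise-linear $R$, which is why $s^*$ is defined as the \emph{least} maximizer. I would argue directly by an exchange argument: fix $c_1 < c_2$, set $s_i = s^*(c_i)$, and add the optimality inequalities $r(s_1) - s_1 c_1 \ge r(s_2) - s_2 c_1$ and $r(s_2) - s_2 c_2 \ge r(s_1) - s_1 c_2$; after cancelling the $r$ terms and rearranging one gets $(c_2 - c_1)(s_1 - s_2) \ge 0$, hence $s_1 \ge s_2$ since $c_2 > c_1$. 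This argument is insensitive to ties (the first inequality holds with equality when $s_2$ also maximizes at $c_1$), so it delivers monotonicity of the least selection without further case analysis. Alternatively one may cite Topkis's monotone comparative statics theorem, noting that $r(s) - sc$ has decreasing differences in $(s,c)$.

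Finally, $p^*(c) = \bar{F}^{-1}(s^*(c))$ is non-decreasing because $\bar{F}$ is a survival function, hence non-increasing, so its generalized inverse $\bar{F}^{-1}$ is non-increasing as well; composing the non-increasing map $s^*(\cdot)$ with the non-increasing map $\bar{F}^{-1}$ yields a non-decreasing function. Apart from the comparative-statics step for $s^*(c)$, every claim reduces to a one-line termwise inequality or a standard property of suprema of affine families.
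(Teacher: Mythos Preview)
Your proof is correct and follows essentially the same route as the paper's: existence via compactness and continuity, the bound $R(c)\ge c$ from $s=0$, monotonicity of $R$ and $R(c)-c$ by fixing an optimal $s$ and comparing, continuity via the Maximum (Berge) Theorem, and $p^*(c)$ non-decreasing from $\bar F^{-1}$ non-increasing. The only cosmetic difference is that the paper invokes Topkis's theorem directly for $s^*(c)$ non-increasing, whereas you give the underlying exchange-of-inequalities argument explicitly (and then mention Topkis as an alternative); these are the same proof at different levels of abstraction.
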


An important consequence of above is that the maximum revenue expected from submitting an impression to AdX is always greater than the opportunity cost. This should not be surprising, since the publisher can pick a price high enough to compensate for the revenue loss of not assigning the impression. Hence, assigning an impression directly to an advertiser (rather than first testing the exchange) is never the right decision, and so in Figure \ref{fig:decisions} the upper branch is never taken.

\begin{new}
Publishers usually receive a revenue share of all impressions sold in the exchange, and no fixed cost is charged for using the exchange (the publisher may still incur a technological setup cost for configuring the system). Under such a \emph{revenue sharing scheme} the exchange keeps a fraction $\alpha$ of the bidder's payment, and the publisher receives the amount $(1-\alpha) p$ for the impression. Our model can accommodate the cost of the exchange by noticing that the publisher only needs to increase the impression's opportunity cost to $c / (1-\alpha)$ to take into account the revenue sharing scheme. It is straightforward to show that, when the exchanges charges a fraction $\alpha$, the publisher's optimization problem is now given by $R_\alpha(c) = (1-\alpha) R(c / (1-\alpha))$, and the optimal price is $p_\alpha^*(c) = p^*(c / (1 - \alpha))$.
\end{new}

\begin{new}
\subsection{Discussion of the Assumptions}

One of our main assumptions is that user attributes are independent across time. Unique user visiting the website arrive essentially at random, so inter-temporal correlation is very weak. Some contracts specify that an ad should not be shown to the same user more than a number given number of times per day. Recurring users can be frequency capped??

It is typically to observe that traffic patterns vary through the day. For example, a newspaper may observe a spike of users connecting in the mornings through their office, an another at time through their home. Our model can accommodate time-dependent traffic patterns in a straight-forward way.

Our model accounts for correlation between the placement quality of the contracts and correlation between the bids from the exchange and the qualities (see section). Thought, targeting in the spot market could be potentially based on similar criteria than the GC, there are several reason that one would expect the correlation no to be perfect. First, publishers usually do not disclose all the information available about the user, and rendering the targeting for the spot market coarser. For example, the user may be registered in the publisher's web-page and disclosed some of her personal information, which may be used by the publisher to improve the targeting of the GC. Second, advertisers in the spot market may perform a behavioral targeting of the users based on \emph{cookies}. Cookies are private bits of information stored in the users computers that allows advertisers to track the past activity of the user on the web. Cookies are dropped by an advertiser when users visit her own web-sites, and are only accessible by her. Thus, a strong component of the spot market bids may have based on private information, resulting in weakly correlated bids.

In section XX we show the impact of correlation.

Another important assumption, is that the publisher is able to adjust the reserve price for each impression. As we shall see one of our main results states that the publisher is better of testing the exchange before assigning the impression to the contracts. Such a results, depends strongly on the assumption that the publisher is able to adjust the reserve to take into account the opportunity cost of ``losing'' an impression of high quality to the exchange. If this is not the case, the publisher would only test the exchange when the expected revenue from AdX exceeds the contracts' opportunity cost. We discuss this further in Section XX.
\end{new}

\section{Problem Formulation}

In this section we start by formulating an optimal control policy for yield maximization based on dynamic programming (DP), where the state of the system is represented by the number of impressions yet to arrive, and a vector of the number of impressions needed to comply with each advertiser's contract. Unfortunately, the state space of the DP has size $O(N^{A+1})$, and in most real-world problems the number of impressions in a single horizon can be in the order of millions. So the DP is not efficiently solvable. We give, instead, an approximation in which stochastic quantities are replaced by their expected values, and are assumed to be continuous. Such ``deterministic approximation problems (DAP)'' are popular in RM (see, e.g.,~\cite{TalluriVanRyzin1998}). In our setting, the approximation we make is to enforce contracts to be satisfied only in expectation. We formulate the problem based on this assumption and obtain an infinite-dimensional program. This DAP is solved by considering its dual problem, which turns out to be a more tractable finite-dimensional convex program. Finally, we wrap a full stochastic policy around it (one that always meets the contracts, not just in expectation).

\subsection{Dynamic Programming Formulation}\label{sec:DP}


Let $(m,X)$ be the state of the system, where we denote by $m$ the total number of impressions remaining to arrive, and by $X = \{x_a\}_{a \in \mathcal{A}}$ the number of impressions needed to comply with each advertiser's contract. Let the value function, denoted by $J_m(X)$, be defined as the optimal expected yield obtainable under state $(m,X)$. Using the fact that is optimal to first test the exchange, we obtain the following Bellman equation
\begin{align}
	J_m(X) &= \mathbb{E}_{Q_n} \Big[ \max_{p\ge0} \Big\{\bar{F}(p)(p+J_{m-1}(X)) + (1-\bar{F}(p)) \max_{a \in \mathcal{A}_0} \{Q_{n,a} + J_{m-1}(X-\mathbf{1}_a)\}  \Big\} \Big] \nonumber\\
	&= J_{m-1}(X) +  \mathbb{E}_{Q_n} \left[ R\left(\max_{a \in \mathcal{A}_0} \{Q_{n,a} - \Delta_a J_{m-1}(X) \}\right)\right] \label{DP},
\end{align}
where we defined $\mathbf{1}_a$ as a vector with a one in entry $a$ and zero elsewhere, $\mathbf{1}_0 = 0$, and $\Delta_a J_m(X) = J_m(X) - J_m(x-\mathbf{1}_a)$ as the expected marginal yield of one extra impression for advertiser $a$. In \eqref{DP} the objective accounts for the yield obtained from attempting to send the impression to AdX. In the yield has two terms that depend on whether the impression is accepted or not by AdX. In the latter case the maximum accounts for the decision of assigning the impression directly to the advertiser or discarding the impression (when $a=0$). In~\eqref{DP} we used the fact that assigning an impression directly to an advertiser is never the right decision (except in boundary conditions, see below). The publisher, however, may choose to discard impressions with low quality after being rejected by AdX.

Our objective is to compute $J_N^* = J_N(C)$. Let $M$ be an upper-bound on the expected yield.\footnote{One could set, e.g., $M \triangleq \max\{ p_\infty, \bar Q \}$ where $\bar Q$ is an upper-bound on the placement quality} The boundary conditions are
\begin{align*}
	J_m(x) &= -M, \qquad \forall \text{$X$ s.t. $x_a<0$ for some $a \in \mathcal{A}$}, \\
	J_m(x) &= -M, \qquad \forall m<\sum_{a \in \mathcal{A}} x_a.
\end{align*}
Recall that when the contract with an advertiser is fulfilled, no extra yield is obtained from assigning to her more impressions. This is the case of the first boundary condition, which guarantees that advertisers whose contract is  fulfilled are excluded from the assignment. In particular, when $X = 0$ all remaining impressions are sent to AdX with the yield maximum price $p^*(0)$ when $x=0$. The second boundary condition guarantees that the contracts with the advertisers are always fulfilled. When $\sum_{a \in \mathcal{A}} x_a = m$ AdX must be bypassed, and impressions should be assigned directly to the advertisers. 
The optimal policy is described in Policy~\ref{policy:dp}.


\begin{algorithm}
\begin{algorithmic}
\STATE Observe state $(m,X)$ and the realization $Q_n$.
\STATE Let $a_n^* = \arg\max_{a \in \mathcal{A}_0} \left\{Q_{n,a} - \Delta_a J_{n-1}(X)\right\}$.
\STATE Submit to AdX with price $p^*\left( Q_{n,a_n^*} - \Delta_{a_n^*} J_{n-1}(X) \right)$.
\IF{impression rejected by AdX and $a_n^*\neq0$}
	\STATE Assign to advertiser $a_n^*$.
\ENDIF
\end{algorithmic}
\caption{Optimal dynamic programming policy.}
\label{policy:dp}
\end{algorithm}

In the above policy, when the impression is submitted to AdX, the optimal price ponders an opportunity cost of $Q_{n,a_n^*} - \Delta_{a_n^*} J_{n-1}(X)$. This opportunity cost, when positive, is just the value of the impression adjusted by the loss of potential yield from
assigning the impression right now.  Note that the two boundary conditions are implicit in the optimal policy. 
This guarantees that the policy complies with the contracts. It is routine to check that the value function $J_n(X)$ is finite for all feasible states and that Policy~\ref{policy:dp} is optimal for the dynamic program in \eqref{DP}. It is worth noting that in order to implement the optimal policy one needs to pre-compute the value function, which is intractable in most real instances.

\subsection{Deterministic Approximation Problem (DAP)}
\label{sec:DAP}

We aim for an approximation in which (i) the policy is independent of the history but dependent on the realization of $Q_n$, (ii) capacity constraints are met in expectation, and (iii) controls are allowed to randomize. These approximations turn out to be reasonable when the number of impressions is large. When an impression arrives, the publisher controls the reserve price submitted to AdX, and the advertiser to whom the impression is assigned, if rejected by AdX. Alternatively, in this formulation we state the controls in terms of total probabilities, where each control is a function from the quality domain to $[0,1]$. Let $\vec{s} = \{s_n(\cdot)\}_{n=1,\ldots,N}$ and $\vec{\imath}=\{i_n(\cdot)\}_{n=1,\ldots,N}$ 
be vectors of functions from $\Omega$ to $\mathbb{R}$, such that when the $n^\text{th}$ impression arrives with quality $Q$ the impression is accepted by AdX with probability $s_n(Q)$, and with probability $i_{n,a}(Q)$ it is assigned to advertiser $a$. The conditional probability of an impression being assigned to advertiser $a$ given that it has been rejected by AdX is given by $I_{n,a}(Q) = i_{n,a}(Q) / (1 - s_n(Q))$. When it is clear from the context, we simplify notation by eliminating the dependence on $Q$ from the controls.


A control is feasible for the DAP if (i) it satisfies the contractual constraint in expectation, (ii) the individual controls are non-negative, and (iii) for every realization of the qualities the probabilities sum up to at most one. We denote by $\mathcal{P}$ the set of controls that satisfy the latter two conditions. That is, $\mathcal{P} = \{(s,i): \sum_{a \in \mathcal{A}} i_{a}\Q + s\Q \le 1,s\ge 0, i \ge 0\}$. The objective of the DAP is to find a sequence of real-valued measurable functions that maximize the expected yield, or equivalently
\begin{subequations} \label{DAP}
\begin{align}
	J_N^D = \max_{(s_n,i_n) \in \mathcal{P}} & \;  \sum_{n=1}^N \mathbb{E} \left[ r(s_n\Qn) + \sum_{a \in \mathcal{A}} i_{n,a}\Qn Q_{n,a}\right] \nonumber\\
	\text{s.t. } & \sum_{n=1}^N \mathbb{E} \left[i_{n,a}\Qn \right] = N \rho_a, \qquad \forall a \in \mathcal{A}.
 \label{DAP:capacity}
\end{align}
\end{subequations}
The first term of the objective accounts for the revenue from AdX, while the second accounts for the quality perceived by the advertisers. Notice that in the DAP we wrote the total capacity as $N \rho_a$ instead of $C_a$ to allow the problem to be scaled.

Alas, the problem is still hard to solve since the number of functions is linear in $N$. However, exploiting the regularity of the revenue function, we can show that in the optimal solution to DAP, we can drop the dependence on $n$ in the controls. This follows from the linearity of the constraints together with the concavity of the objective. We formalize this discussion in the following proposition.

\begin{prop}\label{homocontrols} Suppose that the revenue function is regular. Then, there exists a time-homogenous optimal solution to the DAP, i.e. where $s_n(Q_n) = s(Q_n)$ for all $n=1,\ldots,N$ and $i_n(Q_n) = i(Q_n)$ for all $n=1,\ldots,N$.
\end{prop}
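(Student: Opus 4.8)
The plan is to use the standard averaging (symmetrization) argument for separable concave programs, exploiting three structural facts: the feasible set $\mathcal{P}$ is convex, both the capacity constraints \eqref{DAP:capacity} and the quality part of the objective are linear in the controls, and the revenue function $r$ is concave by Assumption~\ref{regular}. A key ingredient beyond these is that the qualities $\{Q_n\}$ are identically distributed, so that every per-period expectation is an integral against the common c.d.f.\ $G$.

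First I would fix an optimal solution $(s_n,i_n)_{n=1}^N$ of \eqref{DAP} and define the time-averaged control by $\bar s(q)=\frac1N\sum_{n=1}^N s_n(q)$ and $\bar\imath_a(q)=\frac1N\sum_{n=1}^N i_{n,a}(q)$ for $a\in\mathcal{A}$; these are measurable since the $s_n,i_n$ are. I would then check feasibility of $(\bar s,\bar\imath)$. Since $\mathcal{P}$ is cut out by the linear inequalities $s(q)\ge0$, $i_a(q)\ge0$, $\sum_{a}i_a(q)+s(q)\le1$ holding pointwise in $q$, it is convex, so the pointwise average $(\bar s(q),\bar\imath(q))$ of the $(s_n(q),i_n(q))\in\mathcal{P}$ again lies in $\mathcal{P}$. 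For \eqref{DAP:capacity}, since each $Q_n\sim G$ the original constraints read $\sum_{n=1}^N\int i_{n,a}\,dG=N\rho_a$, i.e.\ $\int\bar\imath_a\,dG=\rho_a$; hence $\sum_{n=1}^N\mathbb{E}[\bar\imath_a(Q_n)]=N\int\bar\imath_a\,dG=N\rho_a$, so the time-homogeneous control meets the contracts in expectation.

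Next I would compare objective values. Writing the DAP objective as $\int\sum_{n=1}^N\bigl(r(s_n(q))+\sum_{a}i_{n,a}(q)q_a\bigr)\,dG(q)$ (again using that all $Q_n$ have law $G$, and swapping the finite sum with the integral), the quality term is linear, so $\sum_{n}\sum_{a}i_{n,a}(q)q_a=N\sum_{a}\bar\imath_a(q)q_a$ pointwise, while concavity of $r$ together with Jensen's inequality gives $\sum_{n}r(s_n(q))\le N\,r(\bar s(q))$ pointwise (note $\bar s(q)\in[0,1]$ because each $s_n(q)\in[0,1]$ on $\mathcal{P}$, so $r(\bar s(q))$ is well defined). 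Integrating against $G$, the value attained by $(\bar s,\bar\imath)$ equals $N\int\bigl(r(\bar s)+\sum_{a}\bar\imath_a q_a\bigr)\,dG\ \ge\ J_N^D$. Being feasible, $(\bar s,\bar\imath)$ is therefore optimal, and it is time-homogeneous by construction, which is the assertion.

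I do not expect a serious obstacle here; the one point to handle with care is the use of the identically-distributed hypothesis, which is precisely what collapses the $N$ per-period expectations into a single integral against $G$ and thereby lets the pointwise-in-$q$ concavity and linearity estimates be applied uniformly over $n$ — without it the time-average need not preserve either the constraints or the objective. If one prefers not to presuppose existence of an optimizer, the same computation shows that for \emph{every} feasible control its time-average is feasible with objective no smaller, so the supremum over time-homogeneous controls equals $J_N^D$; existence of an attaining control then follows from a standard compactness argument for the resulting finite-dimensional problem.
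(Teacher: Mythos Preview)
Your proposal is correct and follows essentially the same averaging argument as the paper: define the pointwise time-average of the controls, check feasibility via convexity of $\mathcal{P}$ and linearity of the capacity constraint, and use concavity of $r$ (Jensen) plus linearity of the quality term to conclude the averaged control does at least as well. The only cosmetic difference is that the paper starts from an arbitrary feasible control rather than an optimal one, which is precisely the variant you note in your closing remark.
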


The previous proposition allows us scale the problem so that $N=1$, and consider the maximum expected revenue of one impression, denoted by $J_1^D$. The total revenue for the whole time horizon is then $J_N^D = N J_1^D$. In order to compute the DAP's optimal solution, we consider its dual problem, which we informally derive next.

\paragraphtitle{Derivation of the Dual to DAP.}
\label{sec:dual-derive}
To find the dual, we introduce Lagrange multipliers $v = \{v_a\}_{a \in \mathcal{A}}$ for the capacity constraints \eqref{DAP:capacity}. The Lagrangian, denoted by $\mathcal{L}(s,i;v)$ is
\begin{align*}
	\mathcal{L}(s,i;v) &= \mathbb{E} \left[ r(s\Q) + \sum_{a \in \mathcal{A}} i_a\Q Q_{a}
     - \sum_{a \in \mathcal{A}}v_a \left( i_a\Q - \rho_a \right) \right].
\end{align*}

The dual function, denoted by $\psi(v)$, is the supremum of the Lagrangian over the set $\mathcal{P}$. Thus, we have that
\begin{align*}
	\psi(v) &= \sup_{(s,i) \in \mathcal{P}} \mathcal{L}(s,i;v) \\
	&= \sup_{s \ge 0} \left\{ \mathbb{E} \left[ r(s\Q) \right] +
       \sup_{i \ge 0, \sum_{a \in \mathcal{A}} i_a \le 1 - s} \mathbb{E} \left[ \sum_{a \in \mathcal{A}} i_a\Q (Q_{a} - v_a) \right] \right\} + \sum_{a \in \mathcal{A}}v_a \rho_a \\
	&= \sup_{s \ge 0} \mathbb{E} \left[ r(s\Q) + (1-s) \max_{a \in \mathcal{A}_0} \{Q_a - v_a\} \right] + \sum_{a \in \mathcal{A}}v_a \rho_a \\
	&=  \mathbb{E} R\big(\max_{a \in \mathcal{A}_0} \{Q_a - v_a\}\big) + \sum_{a \in \mathcal{A}}v_a \rho_a
\end{align*}
where the first equation follows from partitioning the optimization between the AdX acceptance and the assignment probability controls, the second from optimizing over the advertiser assignment controls $i$, and the last equation from solving the AdX variational problem. Note that $R$ is convex and non-decreasing and the maximum is convex w.r.t $v$, hence the composite function within the expectation is convex. Using the fact that expectation preserves convexity, we obtain that the objective $\psi(v)$ is convex in $v$.

Next, the dual problem is $\min_{v} \psi(v)$. When the revenue function is regular, the DAP's objective is concave and bounded from above. Moreover, the constraints of the primal problem are linear, and the feasible set $\mathcal{P}$ convex. Hence, by the Strong Duality Theorem (p.224 in~\cite{Luenberger1969}) the dual problem attains the primal objective value. So, we have that dual problem is given by the following convex stochastic problem
\begin{align} \label{DUAL2}
    J_1^D = \min_{v} & \bigg \{ \mathbb{E} R\big(\max_{a \in \mathcal{A}_0} \{Q_a - v_a\}\big) + \sum_{a \in \mathcal{A}}v_a \rho_a \bigg \}.
\end{align}

\paragraphtitle{Deterministic optimal control.}
When the distribution $Q$ is known, the dual problem in \eqref{DUAL2} can be solved using a Subgradient Descent Method. It is worth noting that in many applications the distribution of $Q$ is unknown and should be learned as impressions arrive. We postpone the discussion of that problem until \S\ref{sec:PD-compare}.

Once the optimal dual variables $v$ are known, the primal solution can be constructed from plugging the optimal Lagrange multipliers in $\mathcal{L}(s,i;v)$. Following the derivation of the dual, we obtain that the optimal survival probability is $s(Q) = s^*\left(\max_{a \in \mathcal{A}_0} \{Q_a - v_a\}\right)$. Hence, the impression has a value of $\max_{a \in \mathcal{A}_0} \{Q_a - v_a\}$ for the publisher, and she picks the reserve price that maximizes her revenue. From the optimization over the assignment controls, we see that an impression is assigned to an advertiser $a$ only if she maximizes the contract adjusted quality $Q_a - v_a$. Thus the dual variables $v_a$ act as the \emph{bid-prices} of the guaranteed contracts. Additionally, the impression can be discarded only if the maximum is not verified by an advertiser (i.e. all contract adjusted qualities are non-positive).

Notice that optimizing the Lagrangian states that the impression should be assigned to an advertiser maximizing the contract adjusted quality, but does not specify how the impression should be assigned when --multiple-- advertisers attain the maximum. In the case when the probability of a tie occurring is zero, the problem admits a simple solution: assign the impression to the unique maximizer of $Q_a - v_a$. We formalize this discussion in the the following theorem.

\begin{thm}\label{thm:DAPcontrol} Suppose that the revenue function is regular, and there is zero probability of a tie occurring, i.e. $\mathbb{P} \{Q_a - v_a = Q_a^\prime - v_a^\prime\} = 0$ for all distinct $a,a^\prime \in \mathcal{A}_0$. Then, the optimal controls for the DAP are $s(Q) = s^*\left( \max_{a \in \mathcal{A}_0} \{Q_a - v_a\} \right)$, and $I_a(Q)=\mathbf{1} \left\{Q_a - v_a > Q_{a^\prime} - v_{a^\prime} \: \forall a^\prime \in \mathcal{A}_0  \right\}$, that is, the impression is assigned to the unique advertiser maximizing the contract adjusted quality. Furthermore, the optimal dual variables solve the equations
\begin{align*}
    \mathbb{E} \left[ ( 1 - s^*(Q_a - v_a)) I_a(Q) \right] = \rho_a, \qquad \forall a \in \mathcal{A}.
\end{align*}
\end{thm}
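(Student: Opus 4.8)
The plan is to pass to the dual program~\eqref{DUAL2}, fix a minimizer $v$, read off the controls that maximize the Lagrangian $\mathcal{L}(\cdot\,;v)$ over $\mathcal{P}$, and then use the first-order optimality condition for $v$ to show simultaneously that these controls are primal feasible and that they satisfy the asserted fixed-point equations. By Proposition~\ref{homocontrols} it suffices to work with the scaled problem $N=1$; by the strong-duality argument that precedes the theorem, the value of~\eqref{DUAL2} equals the primal optimum $J_1^D$, and a minimizer $v$ exists because $\psi$ is convex, finite, and --- under the model's standing assumptions (using $R(c)\ge 0$ and $R(c)\ge c$ from Proposition~\ref{thm:revenue-regularity}, plus a mild non-degeneracy such as $\sum_a\rho_a<1$) --- coercive.

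First I would recall, from the derivation of $\psi$, the structure of $\argmax_{(s,i)\in\mathcal{P}}\mathcal{L}(s,i;v)$. For each realization $Q$, the inner maximization over the assignment controls places the entire available mass $1-s(Q)$ on advertisers attaining $\max_{a\in\mathcal{A}_0}\{Q_a-v_a\}$, the outside option $0$ (with $Q_0-v_0:=0$) absorbing any unused mass. Under the no-tie hypothesis $\mathbb{P}\{Q_a-v_a=Q_{a'}-v_{a'}\}=0$ for distinct $a,a'$, this maximizer is almost surely unique, so the optimal assignment control is exactly $I_a(Q)=\mathbf{1}\{Q_a-v_a>Q_{a'}-v_{a'}\ \forall a'\in\mathcal{A}_0\}$ with $i_a(Q)=(1-s(Q))I_a(Q)$. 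Substituting this back leaves the pointwise problem $\max_{s\in[0,1]}r(s)+(1-s)\max_{a\in\mathcal{A}_0}\{Q_a-v_a\}$, whose least maximizer is, by definition, $s^*\!\big(\max_{a\in\mathcal{A}_0}\{Q_a-v_a\}\big)$; on the event $\{I_a(Q)=1\}$ this equals $s^*(Q_a-v_a)$. This reproduces exactly the controls in the statement and shows they maximize $\mathcal{L}(\cdot\,;v)$.

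Next I would establish feasibility from the optimality of $v$. Writing $M(v;Q)=\max_{a'\in\mathcal{A}_0}\{Q_{a'}-v_{a'}\}$, we have $\psi(v)=\mathbb{E}\,R(M(v;Q))+\sum_a v_a\rho_a$. Combining the envelope identity $R'(c)=1-s^*(c)$ (valid off the countable set where the convex function $R$ fails to be differentiable, by Proposition~\ref{thm:revenue-regularity}) with $\partial_{v_a}M(v;Q)=-I_a(Q)$ a.s., and differentiating under the expectation (legitimate by the finite-second-moment and boundedness conditions together with dominated convergence), one gets $\nabla_{v_a}\psi(v)=\rho_a-\mathbb{E}\big[(1-s^*(Q_a-v_a))\,I_a(Q)\big]$. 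Optimality of $v$ forces $\nabla\psi(v)=0$, i.e. $\mathbb{E}[(1-s^*(Q_a-v_a))I_a(Q)]=\rho_a$ for every $a\in\mathcal{A}$ --- which is at once the asserted system of equations and, since $i_a(Q)=(1-s(Q))I_a(Q)=(1-s^*(Q_a-v_a))I_a(Q)$, exactly the capacity constraint~\eqref{DAP:capacity}. Hence the constructed $(s,i)$ is primal feasible.

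Finally I would close with weak Lagrangian duality: for any primal-feasible $(s',i')$ the penalty term in $\mathcal{L}$ vanishes, so $\mathbb{E}[r(s'(Q))+\sum_a i'_a(Q)Q_a]=\mathcal{L}(s',i';v)\le\sup_{\mathcal{P}}\mathcal{L}(\cdot\,;v)=\mathcal{L}(s,i;v)=\mathbb{E}[r(s(Q))+\sum_a i_a(Q)Q_a]$, so $(s,i)$ is optimal for the DAP. I expect the feasibility step to be the main obstacle: the no-tie hypothesis cleanly pins down the $\argmax$, but identifying the \emph{least-maximizer} selection $s^*$ with the subgradient of $\psi$ that vanishes at $v$ requires $R$ to be differentiable at $M(v;Q)$ almost surely (so that $\partial R$ is single-valued there), which has to be argued from regularity of $r$ --- the kink set of $R$ is countable --- perhaps together with an atomlessness argument on the law of $M(v;Q)$, and one must also justify the interchange of $\partial$ and $\mathbb{E}$. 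The remaining ingredients (existence of a dual minimizer, the Lagrangian decomposition) are routine given the earlier results.
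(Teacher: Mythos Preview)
Your proposal is correct and follows essentially the same route as the paper: maximize the Lagrangian at a dual optimum $v$ to read off the controls, use first-order optimality of $v$ to obtain primal feasibility (which is simultaneously the fixed-point system for $v$), and close with complementary slackness/weak duality. The one substantive difference is precisely the step you flag as ``the main obstacle.'' You compute $\nabla_{v_a}\psi(v)$ directly, which forces you to argue that the convex function $R$ is differentiable at $M(v;Q)$ almost surely and that differentiation under the expectation is legitimate; as you note, the kink set of $R$ is only countable, but without an atomlessness assumption on the law of $M(v;Q)$ this is delicate.

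The paper sidesteps this entirely by working with one-sided \emph{directional} derivatives (its Proposition~\ref{prop:gradient}, proved via a result of Shapiro for interchanging expectation and directional differentiation of convex integrands). For each $a\in\mathcal{A}$ one has
\[
\nabla_{\mathbf{1}_a}\psi(v)=\rho_a-\mathbb{P}_R\{Q_a-v_a>\max_{a'\neq a}(Q_{a'}-v_{a'})\},\qquad
\nabla_{-\mathbf{1}_a}\psi(v)=\mathbb{P}_R\{Q_a-v_a\ge\max_{a'\neq a}(Q_{a'}-v_{a'})\}-\rho_a,
\]
where $\mathbb{P}_R$ carries the weight $1-s^*(\cdot)$. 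Optimality of $v$ forces both to be nonnegative, sandwiching $\rho_a$ between the ``strict'' and ``non-strict'' probabilities; the no-tie hypothesis then collapses the two, yielding the equality you want without ever invoking differentiability of $R$ at a particular point. So the paper's device buys you exactly the robustness you were worried about, while your more direct gradient computation is cleaner when it applies but needs the extra regularity argument. Your added remarks on coercivity and existence of a dual minimizer are details the paper leaves implicit.
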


\subsection{Our Stochastic Policy}
\label{sec:policy}
The solution of the DAP suggests a policy for the stochastic
control problem, but we must deal with two technical issues: (i) when
more than one advertiser maximizes $Q_a - v_a$ we need to decide how
to break the tie, and (ii) we are only guaranteed to meet the contracts
in expectation, whereas we must meet them exactly. We defer the first issue until \S\ref{sec:ties}, where we give an algorithm for generalizing the controls $I_a(Q)$ to the case where ties are possible.

We propose a static bid-price control extended with a pricing function for AdX given by $p^*$. The policy, which we denote by $\mu^B$, is defined in Policy~\ref{policy:bp}. In there we let $x_{n,a}$ be the total number of impressions left to assign to advertiser $a$ to comply with the contract, $m=N-n$ the total number of impressions remaining to arrive, and $v$ to be the optimal solution of \eqref{DUAL2}.


\begin{algorithm}
\begin{algorithmic}
\STATE Observe state $(m,X)$ and the realization $Q_n$.
\STATE Let $\mathcal{A}_n = \{a \in \mathcal{A} \; : \; x_{n,a} > 0\}$ be the set of ads yet to be satisfied.
\STATE Let $a_n^* = \arg\max_{a \in \mathcal{A}_n \cup \{0\} } \left\{Q_{n,a} - v_a\right\}$.
\IFTHENELSE{$\sum_{a \in \mathcal{A}} x_{n,a} < m$}{let $p_n = p^*(Q_{n,a_n^*} - v_{a_n^*})$}{let $p_n = p_\infty$}
\STATE Submit to AdX with price $p_n$.
\IF{impression rejected by AdX and $a_n^*\neq0$}
	\STATE Assign to advertiser $a_n^*$.
\ENDIF
\end{algorithmic}
\caption{Static Bid-Price Policy with Pricing $\mu^B$.}
\label{policy:bp}
\end{algorithm}

Notice that impressions are only assigned to advertisers with contracts that have yet to be fulfilled. When all contracts are fulfilled, impressions are sent to AdX with the revenue maximizing price $p^*(0)$. Moreover, when the total number of impressions left is equal to the number of impressions necessary to fulfill the contracts, the price is set to $p_\infty$, and thus all incoming impressions are directly assigned to advertisers. Hence, the stochastic policy $\mu^B$ satisfies the contracts with probability $1$.

The proposed stochastic policy shares some resemblance with the optimal dynamic programming policy. The intuition is that, when the number of impressions is large, the actual state of the system becomes irrelevant because $\Delta_a J_{m-1}(x)$ is approximately constant (for states in likely trajectories), and equal to $v_a$. In that case both policies are equivalent.

\subsection{Handling ties}
\label{sec:ties}

Theorem~\ref{thm:DAPcontrol} had an assumption that there would be no ties between advertisers verifying the maximum $Q_a - v_a$.
In this section we show how to construct a primal optimal solution to the DAP and the corresponding stochastic policy in the general case (for example, when the distribution of placement quality is discrete or has atoms). \cite{DevenurHayes2009} proposed introducing small random and independent perturbations to the qualities, or smoothing the dual problem to break ties. We provide an alternate method that directly attacks ties, and provides a randomized tie-breaking rule.
Computing the parameters of the tie-breaking rule requires solving a flow problem on a graph of size $2^{|A|}$; thus in some settings it may not be possible. In section \ref{sec:computation}, we show that in practice ties do not occur frequently. However, for completeness we provide a full characterization of the problem.

For any non-empty  subset $S \subseteq \mathcal{A}_0$, we define a \emph{$S$-tie} as the event when the maximum is verified exactly by all the advertisers $a \in S$, and the impression is rejected by AdX. Note that the tie may be a singleton, in the case that exactly one advertiser verifies the maximum. Since the dual variables $v$ are known, the probability of such event can be written as
\begin{align*}
	\mathbb{P} (S\text{-tie}) &= \mathbb{E} \Bigl[ \bigl(1 - s^* (\lambda(Q) \bigr) \mathbf{1} \bigl\{ Q_a - v_a = \lambda(Q) \; \forall a \in S, \; Q_a - v_a < \lambda(Q) \; \forall a \notin S \bigr\} \Bigr],
\end{align*}
where $\lambda(Q) = \max_{a \in \mathcal{A}_0} \{Q_a - v_a\}$. With some abuse of notation we define the \emph{$\emptyset$-tie} as the event when the impression is accepted by AdX, that is, $\mathbb{P} (\emptyset\text{-tie}) = \mathbb{E} [s^* (\lambda(Q))]$. Note that the tie events induce a partition of the quality space, and we have that $\sum_{S \subseteq \mathcal{A}_0} \mathbb{P} (S\text{-tie}) = 1$.

We look for a random tie-breaking rule that assigns an arriving impression to advertiser $a \in S$ with conditional probability $I_a(S)$ given that a $S$-tie occurs. Hence, the routing probabilities depend on which advertisers tie, and not on the particular realization of the qualities (they are independent of $\lambda(Q)$). Therefore, under such policy the total probability, originating from $S$-ties, of an impression being assigned to advertiser $a$ is $y_a(S) = \mathbb{P} (S\text{-tie}) I_a(S)$. We can interpret $y_a(S)$ as the normalized flow of impression assigned to the advertiser originating from $S$-ties. We will show that, in terms of $y_a(S)$ as decision variables, finding the tie-breaking rule amounts to solving a transportation problem.

First, in order for the publisher to fulfill the contract with an advertiser $a \in \mathcal{A}$ the incoming flow of impressions over all possible ties sums up to $\rho_a$. The previous constraint can be written as
\begin{align} \label{TRANS:capacity}
	\sum_{S \subseteq \mathcal{A}_0 : a \in S} y_a(S) = \rho_a, 	\qquad \forall a \in \mathcal{A}.
\end{align}
Notice that we impose no constraints for $a=0$ since any number of impressions can be discarded. Alternatively, we could set $\rho_0^\text{eff} = 1 - \mathbb{P} (\emptyset\text{-tie}) - \sum_{a \in \mathcal{A}} \rho_a$ because the impressions effectively discarded are those that are rejected by AdX and not assigned to an advertiser. Second, the outgoing flow of impressions originating from a particular tie should sum up to the actual probability of that tie occurring. Then, we have that
\begin{align} \label{TRANS:prob}
	\sum_{a \in S} y_a(S) = \mathbb{P}(S\text{-tie}), 	\qquad \forall S \subseteq \mathcal{A}_0.
\end{align}
Third, we require that $y_a(S) \ge 0$ for all $S \subseteq \mathcal{A}_0$ and $a \in S$. Finally, in order to obtain the tie-breaking rule we need a non-negative flow satisfying constraints \eqref{TRANS:capacity} and \eqref{TRANS:prob}. Once a such solution is found, the optimal controls can be computed as
\begin{align*}
	I_a(Q) &= \begin{cases}
		y_a(S) / \mathbb{P} (S\text{-tie}) &  \text{if $a \in S$, and $Q$ is an $S$-tie},\\
		0 & \text{otherwise},
		\end{cases}
\end{align*}
with the pricing function as before.

It is not hard to see that the previous problem can be stated as a feasible flow problem in a bipartite graph. We briefly describe how to construct such graph next. On the left-hand side of the graph we include one node for each non-empty subset $S \subseteq \mathcal{A}_0$, and in the right-hand side we add one node for each advertiser $a \in \mathcal{A}_0$. In the following we refer to nodes in the left-hand side as \emph{subset nodes}, and to those in the right-hand side as \emph{advertiser nodes}. The supply for subset nodes is $\mathbb{P}(S\text{-tie})$, while the demand for advertiser nodes is $\rho_a$. Arcs in the graph represent the membership relation, i.e., the subset node $S$ and advertiser node $a$ are connected if and only if $a \in S$. Moreover, arc capacities are set to infinity. In Figure \ref{fig:TRANS} the resulting bipartite graph is shown.

\begin{figure}[t]
\centering
\includegraphics{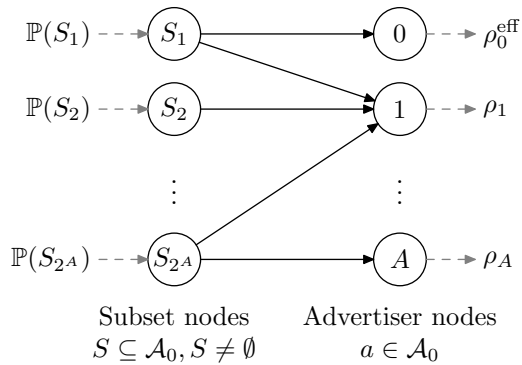}
\caption{Bipartite flow problem solved to obtain the tie-breaking rule. On the left-hand side of the graph we include one node for each non-empty subset $S \subseteq \mathcal{A}_0$ (subset nodes), and in the right-hand side we add one node for each advertiser $a \in \mathcal{A}_0$ (advertiser nodes). The supply for subset nodes is $\mathbb{P}(S\text{-tie})$, while the demand for advertiser nodes is $\rho_a$. Arcs in the graph represent the membership relation, i.e., the subset node $S$ and advertiser node $a$ are connected if and only if $a \in S$. Arc capacities are set to infinity.} \label{fig:TRANS}
\end{figure}

An important question is whether the flow problem admits a feasible solution. The next result proves that the answer is affirmative when the dual variables $v$ are optimal for the dual problem \eqref{DUAL2}. The proof proceeds by casting the feasible flow problem as a maximum flow problem, and then exploiting the optimality conditions of $v$ to lower bound every cut in the bipartite graph.

\begin{prop} \label{prop:flow-feasible} Suppose that $v \in \mathbb{R}^A$ is an optimal solution for the dual problem \eqref{DUAL2}. Then, there exists a non-negative flow satisfying constraints \eqref{TRANS:capacity} and \eqref{TRANS:prob}.
\end{prop}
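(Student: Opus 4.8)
The plan is to recast the existence of a feasible flow as a maximum-flow question and then certify, via the optimality of $v$, that every cut is large enough --- exactly the route the paper advertises. I would augment the bipartite graph of Figure~\ref{fig:TRANS} with a source $s$ joined to each subset node $S$ by an arc of capacity $\mathbb{P}(S\text{-tie})$, and a sink $t$ fed by an arc of capacity $\rho_a$ from each advertiser node $a\in\mathcal{A}$ and by an arc of capacity $\rho_0^\text{eff}=1-\mathbb{P}(\emptyset\text{-tie})-\sum_{a\in\mathcal{A}}\rho_a$ from node $0$; the membership arcs $S\to a$ keep infinite capacity. Since the tie events partition the quality space, the total source capacity equals $1-\mathbb{P}(\emptyset\text{-tie})$, and by construction the total sink capacity equals the same number. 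Hence a non-negative flow meeting \eqref{TRANS:capacity}--\eqref{TRANS:prob} exists iff the maximum flow attains this common value, i.e., by max-flow/min-cut, iff every $s$--$t$ cut has capacity at least $1-\mathbb{P}(\emptyset\text{-tie})$; a flow of that value then automatically saturates every source arc, which is \eqref{TRANS:prob}, and every sink arc, which is \eqref{TRANS:capacity}. It remains to verify $\rho_0^\text{eff}\ge0$ and the cut inequality, both from the optimality of $v$.

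Next I would extract the optimality conditions of \eqref{DUAL2}. The objective $\psi(v)=\mathbb{E}\,R(\lambda(Q))+\sum_{a\in\mathcal{A}}v_a\rho_a$, with $\lambda(Q)=\max_{a\in\mathcal{A}_0}\{Q_a-v_a\}$ and $v_0\equiv0$, is finite-valued and convex; interchanging subdifferential and expectation (valid since the relevant quantities are bounded) and combining the chain rule for convex functions with the envelope identity $1-s^*(c)\in\partial R(c)$ (Proposition~\ref{thm:revenue-regularity}) shows that $0\in\partial\psi(v)$ yields measurable weights $\theta_a(Q)\ge0$, supported on the set of maximizers $\{a\in\mathcal{A}_0:Q_a-v_a=\lambda(Q)\}$, with $\sum_{a\in\mathcal{A}}\theta_a(Q)\le1$ and equality unless $0$ is itself a maximizer, such that
\begin{equation*}
\rho_a=\mathbb{E}\big[(1-s^*(\lambda(Q)))\,\theta_a(Q)\big],\qquad a\in\mathcal{A}.
\end{equation*}
This generalizes the equations of Theorem~\ref{thm:DAPcontrol} to the case of ties. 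Summing over $a\in\mathcal{A}$ and using $\sum_a\theta_a\le1$ gives $\sum_{a\in\mathcal{A}}\rho_a\le\mathbb{E}[1-s^*(\lambda(Q))]=1-\mathbb{P}(\emptyset\text{-tie})$, so $\rho_0^\text{eff}\ge0$, and the source/sink-capacity bookkeeping above is legitimate.

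Finally I would bound an arbitrary finite-capacity cut. Finiteness forces every subset node on the source side to have all its members on the source side, so the cut is determined by the set $\mathcal{T}\subseteq\mathcal{A}_0$ of advertiser nodes it keeps on the source side; keeping every subset node contained in $\mathcal{T}$ on the source side as well, its capacity equals $\big(1-\mathbb{P}(\emptyset\text{-tie})\big)-\sum_{\emptyset\ne S\subseteq\mathcal{T}}\mathbb{P}(S\text{-tie})+\sum_{a\in\mathcal{T}}d_a$, where $d_a=\rho_a$ for $a\in\mathcal{A}$ and $d_0=\rho_0^\text{eff}$. Thus the cut inequality reduces to $\sum_{a\in\mathcal{T}}d_a\ge\mathbb{P}\big(\text{AdX rejects and }\emptyset\ne\{a:Q_a-v_a=\lambda(Q)\}\subseteq\mathcal{T}\big)$. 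If $0\notin\mathcal{T}$ this follows directly from the identity: on the event in the right-hand probability $0$ is not a maximizer, so $\sum_{a\in\mathcal{T}}\theta_a(Q)=1$ there, whence $\sum_{a\in\mathcal{T}}\rho_a=\mathbb{E}[(1-s^*(\lambda))\sum_{a\in\mathcal{T}}\theta_a]$ dominates $\mathbb{E}[(1-s^*(\lambda))\mathbf{1}\{\emptyset\ne\{a:Q_a-v_a=\lambda(Q)\}\subseteq\mathcal{T}\}]$. If $0\in\mathcal{T}$, writing $T=\mathcal{A}\setminus\mathcal{T}$ and substituting $\rho_0^\text{eff}$ rewrites the inequality as $\sum_{a\in T}\rho_a\le\mathbb{P}(\text{AdX rejects and some maximizer lies in }T)$, which again follows from the identity using $\sum_{a\in T}\theta_a\le1$ and $\theta_a\equiv0$ off the maximizer set. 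Every cut then meets the bound, so the maximum flow is full and a feasible flow exists. The step I expect to be the main obstacle is the middle one: extracting the pointwise-in-$Q$ multipliers $\theta_a(Q)$ from the non-smooth stochastic program \eqref{DUAL2}, i.e., rigorously justifying the interchange of expectation with the subdifferential and the measurable selection of $\theta_a$.
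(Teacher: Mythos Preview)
Your max-flow/min-cut setup, the parametrization of finite cuts by a subset of advertiser nodes, and the two-case analysis ($0$ on the source side versus the sink side) all coincide with the paper's proof. The genuine difference is in how you deploy the optimality of $v$: you pass through the full subdifferential of $\psi$ to extract pointwise multipliers $\theta_a(Q)$ and then derive the cut inequalities from the identities $\rho_a=\mathbb{E}[(1-s^*(\lambda))\theta_a]$, whereas the paper bypasses this entirely by invoking the directional-derivative formulas of Proposition~\ref{prop:gradient}. Concretely, the paper observes that optimality gives $\nabla_{-\mathbf{1}_\alpha}\psi(v)\ge0$ and $\nabla_{\mathbf{1}_{\mathcal{A}\setminus\alpha}}\psi(v)\ge0$, and Proposition~\ref{prop:gradient} evaluates these derivatives directly as the needed probabilities $\mathbb{P}_R\{\max_{a\in\alpha}(Q_a-v_a)\ge\max_{a\notin\alpha}(Q_a-v_a)\}-\sum_{a\in\alpha}\rho_a$ (and the analogue with strict inequality), which \emph{are} the cut inequalities.

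This buys the paper exactly what you flagged as the main obstacle: there is no need to interchange expectation with the subdifferential, no measurable selection of $\theta_a$, and no chain rule for the composition $R\circ\max$; only one-sided directional derivatives along the finitely many directions $\pm\mathbf{1}_\alpha$ are required, and those are handled by a single application of the differentiability theorem for expected-value functions already cited in the proof of Proposition~\ref{prop:gradient}. Your route is correct and has the conceptual appeal that the $\theta_a(Q)$ are essentially a primal witness (indeed, setting $y_a(S)=\mathbb{E}[(1-s^*(\lambda))\theta_a\,\mathbf{1}\{S\text{-tie}\}]$ would give the feasible flow directly), but it is technically heavier than what is needed here.
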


We conclude this section by showing that the solution constructed is optimal for the primal problem. Notice that the solution is feasible because it satisfies constraints \eqref{TRANS:capacity} and \eqref{TRANS:prob}. In order to prove optimality it suffices to show that it attains the dual objective value, or that it satisfies the complementary slackness conditions. The latter follows trivially.

Once the optimal controls are calculated, we construct our stochastic policy as follows. We let $\mathcal{A}_n^* = \arg\max_{a \in \mathcal{A}_n \cup \{0\} } \left\{Q_{n,a} - v_a\right\}$, be the set of advertisers that attain the maximum. Now, if the impression is rejected by AdX and $\mathcal{A}_n^* \neq \{0\}$, we assign it to advertiser $a$ in $\mathcal{A}_n^*$ with probability $I_a(Q_n) / \sum_{a^\prime \in \mathcal{A}_n^*} I_{a^\prime}(Q_n) $. Notice that impressions are only assigned to advertisers with contracts that have yet to be fulfilled. Additionally, as the contracts of some advertisers are fulfilled, these are excluded of the assignment, and the routing probabilities $I_a(\cdot)$ of the remaining advertisers are scaled-up and normalized.

%

\section{Asymptotic Analysis}

In this section we show that the heuristic policy constructed from the DAP is asymptotically optimal for the stochastic problem when the number of impressions and capacity are scaled up proportionally. We proceed in the following way. First, we formulate the problem as a stochastic control problem (SCP). Though not practical, this abstract and equivalent formulation is useful from a theoretical point of view. Second, we show that the optimal objective value of the DAP provides an upper bound on the objective value of the SCP. Finally, we show that the upper bound is asymptotically tight.

\paragraphtitle{Stochastic Control Problem.} A stochastic control policy maps states of the system to control actions (prices and target advertiser), and is adapted to the history up to the decision epoch. We restrict our attention to policies that always submit the impression to AdX, which were argued to be optimal. Recall that given the reserve price, the publisher knows the actual probability that the impression is accepted by AdX. As before, we recast the problem in terms of the survival probability control. Hence, the publisher picks the probability that the impression is accepted. Conversely, given a survival probability the reserve price can be easily computed using $\bar{F}^{-1}(\cdot)$. We denote by $s_n^\mu(Q) \in [0,1]$ the target survival probability under policy $\mu$ at time $n$ when an impression with quality $Q$ arrives. Similarly, we let $I_{n,a}^\mu(Q) \in \{0,1\}$ indicate whether the $n^{\text{th}}$ impressions is assigned to advertiser $a$ or not when policy $\mu$ is used. In particular, $I_{n,a}^\mu(Q)=1$ indicates that the impression should be assigned to the advertiser if rejected by AdX.

We let the binary random variable $X_n(s_n^\mu\Q)$ indicate whether the $n^{\text{th}}$ impression is accepted by AdX or not when policy $\mu$ is used. Specifically, $X_n(s_n^\mu\Q)=1$ indicates that the impression is accepted by AdX, and when $X_n(s_n^\mu\Q)=0$ the impression is rejected by AdX. Notice that, conditioning on the quality of the impression and the history, $X_n(s_n^\mu\Q)$ is a Bernoulli random variable with success probability $s_n^\mu\Q$.

We denote by $\mathcal{M}$ the set of admissible policies, i.e. policies that are non-anticipating, adapting and feasible. A feasible policy should satisfy the contractual obligations with each advertiser, or equivalently 
$\sum_{n=1}^N \left[1-X_n(s_n^\mu\Qn)\right] I_{n,a}^\mu\Qn = C_a$ in an almost sure sense. Additionally, the target advertiser controls should satisfy that $\sum_{a \in \mathcal{A}} I_{n,a}^\mu\Qn \le 1$, since the impression should be assigned to at most one advertiser. Finally, the equivalent stochastic optimal control problem is
\begin{align}\label{SP}
	J_N^* = \max_{\mu \in \mathcal{M}} & \; \mathbb{E} \left[ \sum_{n=1}^N r(s_n^\mu\Qn) + \left(1-s_n^\mu\Qn\right) \sum_{a \in \mathcal{A}} I_{n,a}^\mu\Qn Q_{n,a} \right], 
\end{align}
where $J_N^*$ denotes the optimal expected revenue over the set of admissible policies $\mathcal{M}$. The objective follows from conditioning on the quality of the impression and the history.
By the Principle of Optimality it is the case that the dynamic program described in section~\ref{sec:DP} provides an optimal solution to the SCP~\citep{Bertsekas2000} and $J_N(C) = J_N^*$.

\paragraphtitle{Analysis.}
Following a similar analysis to \cite{GallegoVanRyzin1994, TalluriVanRyzin1998,LiuVanRyzin2008}, we first show that the optimal objective value of the DAP provides an upper bound to the objective value SCP, and then prove that this bound is tight. For the first result, we proceed by taking the optimal stochastic control policy, and construct a feasible solution for the DAP by taking expectations over the history. Later, we exploit the concavity of the objective and apply Jensen's inequality to show that this new solution attains a greater revenue in the DAP.

\begin{prop} \label{thm:upperbound}
The optimal objective value of the DAP provides an upper bound on the objective value of the optimal policy, i.e. $J_N^* \le J_N^D$.
\end{prop}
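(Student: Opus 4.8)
The plan is to run the standard deterministic-relaxation argument from network revenue management (cf.\ \cite{GallegoVanRyzin1994,TalluriVanRyzin1998,LiuVanRyzin2008}): take an optimal admissible policy for the SCP \eqref{SP} --- for instance the dynamic-programming policy of \S\ref{sec:DP}, which attains $J_N^*$ --- average its (history-dependent) controls over the randomness in the history to obtain a \emph{deterministic} pair of control functions, show this averaged pair is feasible for the DAP, and then use concavity of the revenue function to conclude that the averaged controls earn at least as much in the DAP as the original policy earns in the SCP.

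In detail, let $\mu\in\mathcal{M}$ be optimal and let $\mathcal{H}_{n-1}$ denote the history before the $n$-th arrival; since arrivals are i.i.d., $\mathcal{H}_{n-1}$ is independent of $Q_n$. The controls $s_n^\mu(\cdot)$ and $I_{n,a}^\mu(\cdot)$ are $\mathcal{H}_{n-1}$-measurable random functions, so I would define the deterministic functions
$$
  \bar s_n(q) \;=\; \mathbb{E}\!\left[s_n^\mu(q)\right], \qquad
  \bar i_{n,a}(q) \;=\; \mathbb{E}\!\left[(1-s_n^\mu(q))\,I_{n,a}^\mu(q)\right],
$$
the expectations being over $\mathcal{H}_{n-1}$ (measurability in $q$ is a Fubini argument). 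First I would check $(\bar s_n,\bar i_n)\in\mathcal{P}$: non-negativity is immediate, and pathwise $s_n^\mu(q)+\sum_{a\in\mathcal{A}}(1-s_n^\mu(q))I_{n,a}^\mu(q) = s_n^\mu(q) + (1-s_n^\mu(q))\sum_a I_{n,a}^\mu(q)\le 1$ because a rejected impression is assigned to at most one advertiser, and taking expectations preserves the bound. Next, feasibility of \eqref{DAP:capacity}: admissibility of $\mu$ gives $\sum_{n=1}^N (1-X_n(s_n^\mu(Q_n)))I_{n,a}^\mu(Q_n)=C_a$ almost surely, and taking expectations --- conditioning first on $(\mathcal{H}_{n-1},Q_n)$ to replace $1-X_n$ by its conditional mean $1-s_n^\mu(Q_n)$, then on $Q_n$ and using independence --- yields $\sum_{n=1}^N\mathbb{E}[\bar i_{n,a}(Q_n)] = C_a = N\rho_a$.

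The same conditioning identity shows the quality parts of the two objectives coincide term by term, namely $\mathbb{E}\big[(1-s_n^\mu(Q_n))\sum_a I_{n,a}^\mu(Q_n)Q_{n,a}\big] = \mathbb{E}\big[\sum_a \bar i_{n,a}(Q_n)Q_{n,a}\big]$, so the only inequality comes from the AdX revenue term: since $r$ is concave (Assumption \ref{regular}), Jensen's inequality gives $\mathbb{E}[r(s_n^\mu(q))]\le r(\bar s_n(q))$ for each fixed $q$, hence $\mathbb{E}[r(s_n^\mu(Q_n))]\le \mathbb{E}[r(\bar s_n(Q_n))]$ after integrating over $Q_n$. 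Summing over $n$, the SCP value of $\mu$ is at most the DAP objective evaluated at the feasible point $(\bar s,\bar i)$, which is at most $J_N^D$; therefore $J_N^*\le J_N^D$. The argument is almost entirely bookkeeping, so the one thing to be careful about is the measure-theoretic step --- exploiting that $Q_n$ is independent of the history so that ``condition on $Q_n=q$ and average over the history'' is legitimate, and that the averaged functions are measurable --- rather than any sharp estimate; the single genuine inequality is Jensen's inequality applied to $r$.
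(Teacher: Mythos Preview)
Your proposal is correct and follows essentially the same argument as the paper: average the optimal policy's history-dependent controls over the history (conditional on the current quality) to obtain a deterministic control, verify it is feasible for the DAP, and then apply Jensen's inequality to the concave revenue function $r$ for the only nontrivial inequality while the linear quality terms match exactly. The paper's proof is terser about the independence and measurability bookkeeping you flag, but the structure and the single substantive step (Jensen on $r$) are identical.
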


Now we complete the analysis by lower bounding the yield of the stochastic policy in terms of the DAP objective. In proving that bound, we look at $N^*$, the first time that any advertisers contract is fulfilled or the point is reached where all arriving impressions need to be assigned to the advertisers. We refer to the time after $N^*$ as the \emph{left-over regime}. The first key observation in the proof is that before time $N^*$, the controls of the stochastic policy behave exactly as the optimal deterministic controls. The second key observation is that the expected number of impressions in the left-over regime is $O(\sqrt N)$, and the left-over regime has a small impact on the objective.

\begin{thm}
\label{thm:main}
Let $J_N^{B}$ be the expected yield under the stochastic policy $\mu^B$. Then,
$$
\frac {J_N^{B}} {J_N^*} \ge \frac {J_N^{B}} {J_N^D} \ge 1 - \frac 1 {\sqrt N} K(\rho),
$$
where
$K(\rho) = \sqrt{ \frac A {A+1} \sum_{a \in \mathcal{A}_0} \frac {1 - \rho_a} {\rho_a}}$.
\end{thm}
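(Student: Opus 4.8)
The plan is to bound $J_N^B$ from below by the DAP value $J_N^D$ minus a correction term coming entirely from the ``left-over regime''. Recall $J_N^D = N J_1^D$ by Proposition~\ref{homocontrols}, and that Proposition~\ref{thm:upperbound} already gives $J_N^* \le J_N^D$, so it suffices to prove $J_N^B \ge J_N^D - \sqrt N\, K(\rho)\, J_1^D$, i.e.\ $J_N^B/J_N^D \ge 1 - K(\rho)/\sqrt N$. First I would define the stopping time $N^*$ as the first time $n$ at which either some advertiser's residual demand $x_{n,a}$ hits $0$, or $\sum_{a} x_{n,a} = N - n$ (all remaining impressions are needed to fill contracts). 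The key structural observation is that for every impression $n < N^*$, the policy $\mu^B$ acts \emph{exactly} like the time-homogeneous optimal DAP control: all contracts are still open (so $\mathcal{A}_n \supseteq \mathcal{A}$, and the $a=0$ option is there too), and the condition $\sum_a x_{n,a} < m$ holds, so the reserve price is $p^*(Q_{n,a_n^*} - v_{a_n^*})$ with $a_n^* = \argmax_{a\in\mathcal{A}_0}\{Q_{n,a}-v_a\}$ — precisely the survival probability $s^*(\lambda(Q_n))$ and assignment rule of Theorem~\ref{thm:DAPcontrol} (with the randomized tie-break of \S\ref{sec:ties}). Hence the per-impression expected yield for $n < N^*$ equals $J_1^D$, and by a conditioning/optional-stopping argument (the increments are i.i.d.\ given we are before $N^*$, and $J_1^D$ is their common conditional mean), $\Exp{\sum_{n=1}^{N^*-1}(\text{yield}_n)} \ge J_1^D \cdot \Exp{N^*-1} \ge J_1^D(\Exp{N^*} - 1)$.

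Next I would argue that the yield contributed by impressions $n \ge N^*$ (the left-over regime) is nonnegative — each such impression is either auctioned in AdX at a nonnegative revenue or assigned to an advertiser with $Q_{n,a} - v_a$ maximal, and in any case the realized contribution can be lower-bounded by $0$ up to the bid-price bookkeeping, which nets out to $0$ when summed against the $\sum_a v_a \rho_a$ term; more cleanly, one shows the policy still collects at least the AdX floor revenue and never destroys value. So $J_N^B \ge J_1^D(\Exp{N^*}-1)$. It then remains to lower bound $\Exp{N^*}$, equivalently to upper bound $N - \Exp{N^*} = \Exp{N - N^*}$, the expected length of the left-over regime. For this I would fix an advertiser $a$ and track $x_{n,a}$: before $N^*$ the number of impressions actually routed to $a$ by time $n$ is a sum of Bernoulli indicators with mean $\rho_a$ per step (this is exactly the DAP capacity equation in Theorem~\ref{thm:DAPcontrol}), so $C_a - x_{n,a}$ is a process with drift $\rho_a$ and bounded increments. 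The first time advertiser $a$'s contract fills is therefore $\approx C_a/\rho_a = N$ with a fluctuation of order $\sqrt{N}$; quantitatively, using the variance of a $\mathrm{Bernoulli}(\rho_a)$-type increment (variance at most $\rho_a(1-\rho_a)$) and a martingale/Chebyshev bound on how far below $N$ this hitting time can be, one gets $\Exp{N - (\text{fill time of }a)} \le \sqrt{N(1-\rho_a)/\rho_a}$ (roughly). The same bound, with $\rho_0$ in place of $\rho_a$, controls the event $\sum_a x_{n,a} = N-n$, i.e.\ the ``$a=0$'' resource running out. Taking a union-type bound over $a \in \mathcal{A}_0$ and optimizing the constants — here is where the $\sqrt{\tfrac{A}{A+1}\sum_{a\in\mathcal{A}_0}\tfrac{1-\rho_a}{\rho_a}}$ shape of $K(\rho)$ emerges, via Cauchy–Schwarz / power-mean over the $|\mathcal{A}_0| = A+1$ resources — yields $\Exp{N-N^*} \le \sqrt N\, K(\rho)$, and combining with $J_N^B \ge J_1^D(\Exp{N^*}-1) \ge J_N^D - J_1^D\sqrt N\,K(\rho)$ gives the claim after dividing by $J_N^D$.

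The main obstacle I anticipate is making the left-over regime argument fully rigorous on two fronts. First, showing the post-$N^*$ contribution is genuinely nonnegative requires care: once some contract fills, other contracts may be \emph{forced} (via the $p_\infty$ branch), and one must verify the forced assignments plus the AdX revenue together cannot drag the yield below what the DAP accounting credits — this is where the boundary conditions of \S\ref{sec:DP} and the inequality $R(c)\ge c$ from Proposition~\ref{thm:revenue-regularity} do the work. Second, the bound on $\Exp{N-N^*}$ is really a statement about the minimum over $A+1$ correlated hitting times of how much each can undershoot $N$; I would handle each resource separately with a one-sided martingale deviation bound (so correlations across advertisers, which the model allows, are irrelevant because I only ever union-bound), and the clean closed form for $K(\rho)$ then follows from applying Cauchy–Schwarz to $\sum_{a\in\mathcal{A}_0}\sqrt{(1-\rho_a)/\rho_a}$ against the constant vector. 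Everything else — the optional-stopping identity for the pre-$N^*$ yield and the reduction $J_N^D = NJ_1^D$ — is routine given the propositions already established.
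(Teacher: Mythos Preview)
Your overall architecture matches the paper's proof exactly: use $J_N^*\le J_N^D$, define the stopping time $N^*$, observe that $\mu^B$ coincides with the DAP controls for $n\le N^*$, drop the nonnegative post-$N^*$ yield, and reduce to lower-bounding $\mathbb{E}[N^*]$. Two points deserve comment.

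First, the pre-$N^*$ accounting and the left-over step are simpler than you make them. The paper applies Wald's equation directly to obtain $\mathbb{E}\big[\sum_{n=1}^{N^*}R_n\big]=\mathbb{E}[N^*]\,J_1^D$ (no $-1$), and the post-$N^*$ contribution is discarded using only $R_n^\mu\ge 0$; there is no bookkeeping against $\sum_a v_a\rho_a$ and no appeal to $R(c)\ge c$.

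Second, and more substantively, your route to the specific constant $K(\rho)$ does not go through. The paper notes that in the uncapped system each $N_a$ is negative binomial with $C_a$ successes and success probability $\rho_a$, hence $\mathbb{E}[N_a]=N$ and $\text{Var}[N_a]=N(1-\rho_a)/\rho_a$, and then invokes the bound of \citet{Aven1985} on the expectation of a minimum:
\[
\mathbb{E}\Big[\min_{a\in\mathcal{A}_0}N_a\Big]\;\ge\;\min_{a}\mathbb{E}[N_a]-\sqrt{\tfrac{A}{A+1}\sum_{a\in\mathcal{A}_0}\text{Var}[N_a]}\;=\;N-\sqrt{N}\,K(\rho).
\]
Your plan instead bounds each hitting time separately and combines via a union-type bound plus Cauchy--Schwarz. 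But $\mathbb{E}[N-N_a]=0$ for every $a$, so individual mean deviations are uninformative; and if you control $\mathbb{E}[(N-N_a)^+]\le\tfrac12\sqrt{\text{Var}[N_a]}$, sum over $a$, and then apply Cauchy--Schwarz against the constant vector, the prefactor you obtain is of order $\sqrt{A+1}$, not $\sqrt{A/(A+1)}$. That yields the correct $O(\sqrt{N})$ rate and the same $\rho$-dependence inside the square root, but not the stated $K(\rho)$. The factor $\sqrt{A/(A+1)}$ is precisely the content of Aven's inequality and does not emerge from a union bound and Cauchy--Schwarz alone.
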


\begin{proof}{Proof.}
The first bound follows from Proposition \ref{thm:upperbound}. We now prove the second bound. 

Let $S_{n,a}^\mu = \sum_{i=1}^n \left(1 - X_i(s_i^\mu(Q_i)) \right) I_{i,a}^\mu(Q_i)$ be the total number of impressions assigned to advertiser $a$ by time $n$ when following the stochastic policy $\mu^B$. Additionally, we denote by $S_{n}^\mu = \{ S_{n,a}^\mu \}_{a \in \mathcal{A}}$ the random vector of impressions assigned to advertisers. Then, $x_{n,a}=C_a - S_{n,a}^\mu$ is the total number of impressions left to assign to advertiser $a$ to fulfill the contract, and $m=N-n$ is the total number of impressions remaining to arrive.

To simplify the proof, we let $C_0 = N - \sum_{a\in\mathcal{A}} C_a$ be the total number of impressions that are not assigned to any advertiser (accepted by AdX and discarded), and we refer to $S_{n,0}^\mu = n - \sum_{a\in\mathcal{A}} S_{n,a}$ as total number of impressions not assigned to any advertiser by time $n$ when following the stochastic policy $\mu^B$. Because $C_0$ is the total number of impressions we can dispense of, when the point is reached that $S_{n,0} = C_0$, then all remaining impressions need to be assigned to the advertisers.

Let the random time $N^* = \inf \left\{1\le n \le N \; : \; \text{ $x_{n,a} = 0$ for some $a \in \mathcal{A}$ or $\sum_{a \in \mathcal{A}} x_{n,a} = m$}\right\}$ be the first time that any advertiser's contract is fulfilled or the point is reached where all arriving impressions need to be assigned to the advertisers. Clearly, $N^*$ is a stopping time with respect to the stochastic process $\{ S_{n}^\mu \}_{n=1,\ldots,N}$.

In the following, let $R_n^\mu$ be the revenue from time $n$ under policy $\mu^B$. Similarly, we denote by $R_n$ the revenue from time $n$ when the deterministic control are used in an alternate system with no capacity constraints. Because the deterministic controls are time-homogeneous, and the underlying random variables are i.i.d., then the random variables $\{R_n\}_{n=1,\ldots,N}$ are i.i.d. too. Moreover, it is the case that $\mathbb{E} R_n = J_1^D$. Notice that when $n < N^*$, the controls of stochastic policy $\mu^B$ behave exactly as the optimal deterministic controls. Thus, $R_n = R_n^\mu$ for $n < N^*$. Using this fact together with the fact that $N^*$ is a stopping time we get that
\begin{align}\label{boundOnV}
	J_N^{B} &= \mathbb{E} \left[\sum_{n=1}^N R_n^{\mu} \right] = \mathbb{E} \left[ \sum_{n=1}^{N^*} R_n + \sum_{n=N^*+1}^{N} R_n^{\mu} \right] \ge  \mathbb{E} \left[ \sum_{n=1}^{N^*} R_n \right] = \mathbb{E} N^* J_1^D,
\end{align}
where the inequality follows from the non-negativity of the revenues, and the last equality from Wald's equation. Then, we conclude that $J_N^{B} / J_N^D \ge \mathbb{E} N^* / N$.

Next, we turn to the problem of lower bounding $\mathbb{E} N^*$. Before proceeding we make some definitions. We define by $S_{n,a}$ the number of impressions assigned to advertiser $a$ by time $n$ when following the deterministic controls in the alternate system with no capacity constraints. As for the revenues, it is the case that $S_{n,a} = S^\mu_{n,a}$ for $n < N^*$. We define $S_{n,0}$ in a similar fashion.

Let $N_a = \inf \left\{n \ge 1\; : \; S_{n,a} = C_a \right\}$ be the time when the contract of advertiser $a \in \mathcal{A}$ is fulfilled, and $N_0 = \inf \left\{n \ge 1 \; : \; S_{n,0} = C_0 \right\}$ be the point in time where all arriving impressions need to be assigned to the advertisers. Even though these stopping times are defined with respect to the stochastic process that follows the deterministic controls, it is the case that $N^* = \min_{a \in \mathcal{A}_0} \{ N_a\}$. In the remainder of the proof we study the mean and variance of each stopping time, and then conclude with a bound for $\mathbb{E} N^*$ based on those central moments.

For the case of $a \in \mathcal{A}$, the summands of $S_{n,a}$ are independent Bernoulli random variables with success probability $\rho_a$. The success probability follows from \eqref{DAP:capacity}. Hence, $N_a$ is a negative binomial random variable with $C_a$ successes and success probability $\rho_a$. The mean and variance are given by $\mathbb{E} N_a = N$, and $\text{Var} [N_a] = N \frac {1 - \rho_a} {\rho_a}$, where we used that $\rho_a = C_a / N$. Similarly, for the case of $a=0$, now the summands of $S_{n,0}$ are Bernoulli random variables with success probability $\rho_0$. Hence, $N_0$ is a negative binomial random variable with $C_0$ successes and success probability $\rho_0$.

Finally, using the lower bound on the mean of the minimum of a number of random variables of \citet{Aven1985} we get that
\begin{align} \label{boundOnN}
	\mathbb{E} N^* &= \mathbb{E} \min_{a \in \mathcal{A}_0} \{ N_a \} \ge \min_{a \in \mathcal{A}_0} \mathbb{E} N_a - \sqrt{ \frac A {A+1} \sum_{a \in \mathcal{A}_0} \text{Var} [N_a]} \nonumber\\
	&=N - \sqrt{ \frac A {A+1} } \sqrt { \sum_{a \in \mathcal{A}_0} N \frac {1 - \rho_a} {\rho_a}} = N - \sqrt{ N } K(\rho).
\end{align}
The result follows from combining \eqref{boundOnV} and \eqref{boundOnN}.\Halmos
\end{proof}

In terms of yield loss, our previous bound can be written as
$
J_N^* - J_N^B \le \sqrt{N} K(\rho) J_1^D,
$
achieving an $O(\sqrt N)$ loss w.r.t the optimal online policy. In particular, we may fix the capacity to impression ratio of each advertiser, and consider a sequence of problems in which capacity and impressions are scaled up proportionally according to $\rho$. Then, the yield under policy $\mu^B$ converges to the yield of the optimal online policy as $N$ goes to infinity.

A key observation in proving the last theorem was that the number of impressions in the left-over regime is $O(\sqrt N)$. In fact, using a Chernoff bound, we may show that the probability that the number of impressions in the left-over regime exceeds a fraction of the total impressions decays exponentially fast.

\begin{cor} \label{thm:left-over} The probability that the number of impressions in the left-over regime exceeds a fraction $\epsilon>0$ of the total impressions decays exponentially fast, as given by
    \begin{align*}
    \mathbb{P} \{ N - N^* \ge \epsilon N \} \le \sum_{a\in\mathcal{A}_0} \exp(-2 \epsilon^2 \rho_a N ).
    \end{align*}
\end{cor}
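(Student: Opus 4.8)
The plan is to reduce the tail event to a union of lower–tail (large–deviation) events for the negative–binomial stopping times $N_a$ and then apply a Chernoff bound to each.

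First I would rewrite the event. As noted in the proof of Theorem~\ref{thm:main}, $N^* = \min_{a \in \mathcal{A}_0} N_a$, so $\{N - N^* \ge \epsilon N\} = \{N^* \le (1-\epsilon)N\} = \bigcup_{a \in \mathcal{A}_0} \{N_a \le (1-\epsilon)N\}$, and the union bound gives
$$
\mathbb{P}\{N - N^* \ge \epsilon N\} \;\le\; \sum_{a \in \mathcal{A}_0} \mathbb{P}\{N_a \le (1-\epsilon)N\}.
$$
It therefore suffices to show $\mathbb{P}\{N_a \le (1-\epsilon)N\} \le \exp(-2\epsilon^2 \rho_a N)$ for each $a \in \mathcal{A}_0$.

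Next, fix $a$ and recall that $N_a$ is the first time the counting process $S_{n,a}$ --- a sum of i.i.d.\ Bernoulli$(\rho_a)$ increments --- reaches $C_a = \rho_a N$. Hence, for every integer $m$, $\{N_a \le m\} = \{S_{m,a} \ge C_a\}$; taking $m = \lfloor (1-\epsilon)N \rfloor$ converts $\mathbb{P}\{N_a \le (1-\epsilon)N\}$ into the upper tail of a Binomial$(m,\rho_a)$ at level $C_a$. Since $\mathbb{E}\,S_{m,a} = m\rho_a \le (1-\epsilon)C_a$, reaching $C_a$ forces $S_{m,a}$ to exceed its mean by at least $C_a - m\rho_a = \rho_a(N-m) \ge \epsilon \rho_a N$, a deviation of constant relative size $\epsilon/(1-\epsilon)$; I would then apply a Chernoff bound for the binomial upper tail (equivalently, the lower--tail Chernoff bound for the negative binomial $N_a = \sum_{j=1}^{C_a} G_j$, with $G_j$ geometric of mean $1/\rho_a$) and simplify to obtain the desired $\exp(-2\epsilon^2 \rho_a N)$. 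The degenerate case $\rho_a \ge 1-\epsilon$ is immediate, since then $S_{m,a} \le m < C_a$ and the probability is $0$. Summing over $a \in \mathcal{A}_0$ completes the argument.

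The structural part --- rewriting the event, the union bound, and the reduction to a binomial/negative--binomial tail --- is routine; the only real work is the last estimate. Writing the Chernoff exponent in relative--entropy form, $m\,D(C_a/m \,\|\, \rho_a)$ with $m = \lfloor (1-\epsilon)N \rfloor$, the bound reduces to elementary inequalities such as $-\ln(1-\epsilon) \ge \epsilon + \epsilon^2/2$. The reason the exponent carries $\rho_a$ rather than $\rho_a^2$ --- the latter being what a naive Hoeffding bound over $\approx N$ trials would give --- is that the effective number of summands governing the fluctuations of $N_a$ is $C_a = \rho_a N$, not $N$; this is precisely the point to be careful about. That calculation is short but slightly fiddly, and is where I would expect to spend essentially all of the effort.
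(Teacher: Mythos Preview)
Your proposal is correct and follows essentially the same route as the paper: reduce to a union bound over $a \in \mathcal{A}_0$, convert the lower tail of $N_a$ into the upper tail of the binomial $S_{(1-\epsilon)N,a}$ via the identity $\{N_a \le m\} = \{S_{m,a} \ge C_a\}$, and apply a Chernoff bound. The paper's proof is terser---it phrases the reduction directly in terms of $S_{(1-\epsilon)N,a}$ and invokes Chernoff without the relative-entropy bookkeeping you outline---but the substance is identical, and your remark about needing the multiplicative (Bernoulli-specific) form rather than plain Hoeffding to get $\rho_a$ instead of $\rho_a^2$ in the exponent is exactly the right point of care.
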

\begin{proof}{Proof.}
    We prove the complement, that is, the probability that $N^* \ge (1-\epsilon) N$ converges exponentially fast to one. Notice that $N^* \ge (1-\epsilon) N$ if and only if by time $(1-\epsilon)N$ the contract of each advertiser is not yet fulfilled ($S_{(1-\epsilon)N, a} < C_a$), and the point where all impressions need to be assigned to advertisers has not been reached ($S_{(1-\epsilon)N,0} < C_0$). Combining De Morgan's law and Boole's inequality we get that
    \begin{align*}
    \mathbb{P} \{ N^* \ge (1-\epsilon) N \}  = \mathbb{P} \{ S_{(1-\epsilon)N, a} < C_a \; \forall a \in \mathcal{A}_0 \}
    \ge 1 - \sum_{a\in\mathcal{A}_0} \mathbb{P} \{ S_{(1-\epsilon)N, a} \ge C_a\}.
    \end{align*}
    Recall that $S_{(1-\epsilon)N,0}$ is the sum of $(1-\epsilon)N$ independent Bernoulli random variables with success probability $\rho_a$. Hence, we conclude by applying Chernoff's bound to the each summand to obtain $\mathbb{P} \{ S_{(1-\epsilon)N, a} \ge C_a\} \le \exp(-2 \epsilon^2 \rho_a N )$.\Halmos
\end{proof}

The policy described in~\ref{sec:policy} is static in the sense that it does not react to changes in supply: the dual variables $v$ are computed at the beginning and remain fixed throughout the horizon. To address this issue, in practice, one would periodically resolve the deterministic approximation~\eqref{DUAL2}. Recently, \citet{JasinKumar2010} showed that carefully chosen periodic resolving schemes together with probabilistic allocation controls can achieve bounded yield loss w.r.t.~the optimal online policy. It is worth noting that those results do not directly apply to our setting: they consider a network RM problem with discrete choice, while our model deals with jointly distributed (and possibly continuous) placement qualities and AdX. Nevertheless, by periodically resolving the DAP one should be able to obtain similar performance guarantees for the yield loss of the control.

\section{Data Model and Estimation}\label{sec:data-model}


We have thus far assumed that any user could be potentially assigned to any advertiser. In practice, however, advertisers have specific targeting criteria. For instance, a guaranteed contract may demand for females with certain age range living in New York, while other contract may demand for males in California. In this section we give a parametric model based on our observation of real data, which takes into consideration that advertisers demand for particular \emph{user types} in their contracts. 

Instead of grouping user types according to their attributes, we aggregate user types that match the criteria of the same subset of advertisers. This has the advantage of reducing the space of types to a function of the number of advertisers (which is typically small in practice) rather then the number of possible types (which is potentially large). Hence, a user type is characterized by the subset of advertisers $T \subseteq \mathcal{A}$ that are interested in it. In the following, we let $\mathcal{T}$ be the support of the type distribution, and $\pi(T)$ the probability of an arriving impression being of type $T$. As before we assume that, across different impressions, types are independent and identically distributed. Given a particular type $T$, the predicted quality perceived by the advertisers within the type is modeled by the non-negative random vector $Q(T) = \{Q_{a}(T)\}_{a\in T}$.

Even if the total number of impressions suffices to satisfy the contracts, i.e. $\sum_{a \in \mathcal{A}} \rho_a \le 1$, the inventory may not be enough to satisfy the contracts targeting criteria. Our algorithm guarantees that the total number of impressions $C_a$ is always respected, yet some advertisers may be assigned impressions outside of their criteria. If an impression of type $T$ happens to be assigned to an advertiser $a\not\in T$, the publishers pays a nonnegative goodwill penalty $\tau_a$. These penalties allow the publisher to prioritize certain reservations, specially when the contracts are not feasible. Thus, the ex-ante distribution of quality is given by the mixture of the types distribution with mixing probabilities $\pi(T)$.
Notice that all our previous results hold if we apply the same analysis to the mixture distribution.

\subsection{Estimation}

Although the number of types may be exponential in $A$, in practice we observe that a linear number of them suffice to characterize 98\% of the inventory. We observe that the predicted quality perceived by the advertisers within a type is approximately log-normal. This can be seen in Figure~\ref{fig:corr-plot}, where the empirical distribution of log-quality is graphically represented for a type with two advertisers (data is log-transformed). The histograms on the diagonal show the marginal log-quality of each advertiser, which approximately resemble a normal curve. On the off-diagonals, scatter plots show the correlation between advertisers, which is strongly positive. In some sense this is expected, since many advertisers have similar targeting criteria.


\begin{figure}[t]
\centering
\includegraphics[width=6cm]{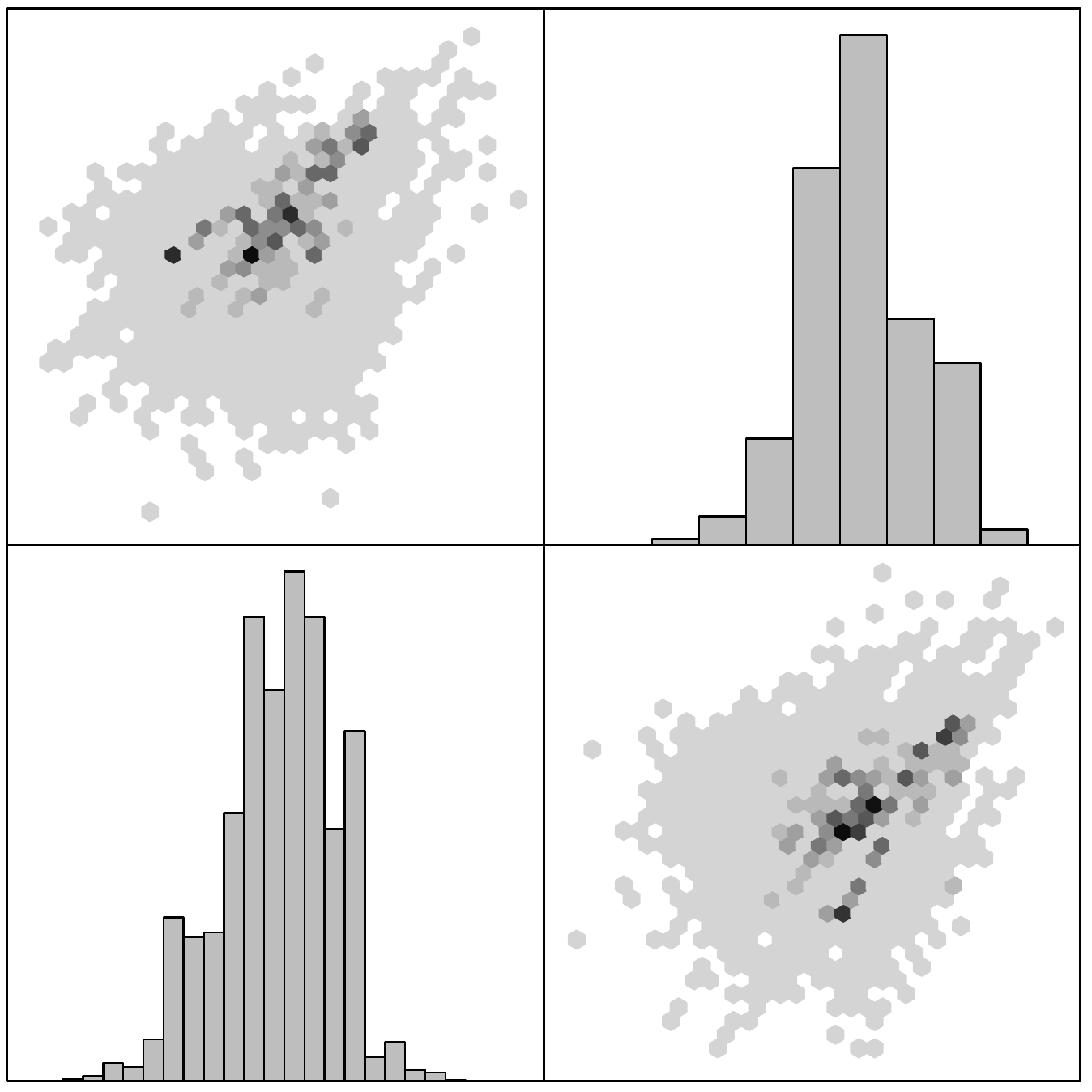}
\caption{Graphical representation of the empirical distribution of log-quality for a type with two advertisers (data is log-transformed).}\label{fig:corr-plot}
\end{figure}

Given a particular type $T$, we assume that quality follows a multivariate log-normal with mean vector $\mu_T$ and covariance matrix $\Sigma_T$ for the advertisers in the type, and takes a value of $-\tau_a$ for advertisers not in the type. The total distribution of quality is given by the mixture of these types distribution with mixing probabilities $\pi(T)$. Thus, we have that
\begin{align*}
	Q \sim \begin{cases}
	\ln \mathcal{N} (\mu_T, \Sigma_T), & \text{for } a \in T, \\
	-\tau_a, & \text{for } a \not\in T, \\
	\end{cases}
	\quad \text{w.p. } \pi(T).
\end{align*}

To perform the estimation we analyzed data from four different publishers for a consecutive period of seven days. First, the capacities of the reservations were used to compute the ratios $\rho$. Second, logs were analyzed to estimate the types' frequencies, and the parameters of the underlying log-normal distributions (using maximum likelihood estimation).

Bidding data from the same period of time was used to estimate the primitives of the AdX. With multiple bidders, AdX runs a sealed bid second-price auction. In this first approach to the problem, we assume that bids are independent of the quality of the impressions. We analyze the first and second highest bids for the inventory submitted to AdX. We denote by $\left\{(B_1^m, B_2^m)\right\}_{m=1,\ldots,M}$ the sampled highest and second highest bids from the exchange. Sample data is used to compute the two primitives of our model: (i) the complement of the quantile of the highest bid $p(s)$, and (ii) the revenue function of $r(s)$. Both functions are estimated on a uniform grid $\{s_j\}_1^{100}$ of survival probabilities in the $[0,1]$ range.

First, for each point in the grid $j$, the price $p_j = p(s_j)$ is estimated as the $(1-s_j)$-th population quantile of the highest bid. Then, using  sampled bids, we estimate the revenue function w.r.t. to prices at the grid points as
\begin{align}\label{r-estimated}
	r(p_j) = \frac 1 M \sum_{m=1}^M \mathbf{1} \{B_1^m \ge p_j\} \max\{B_2^m, p_j\}
\end{align}
Finally, the revenue function is obtained by composing \eqref{r-estimated} and $p(s)$. AdX data is available only for the first two publishers.

Figure \ref{fig:est-type-graph} describes Instance 1, a publisher with 4 types and 3 advertisers. The estimated survival probability and revenue function for the publisher is shown in Figure \ref{fig:est-adx}. The parameters for the remaining publishers are available at the webpage of the first author.

\begin{figure}[t]
\centering
    \subfloat[User type-advertiser graph.]{\includegraphics[width=0.45\textwidth]{figure-5.mps}\label{fig:est-type-graph}} \hspace{0.08\textwidth}
    \subfloat[Estimated survival probability and revenue function for AdX.]{\includegraphics[width=0.45\textwidth]{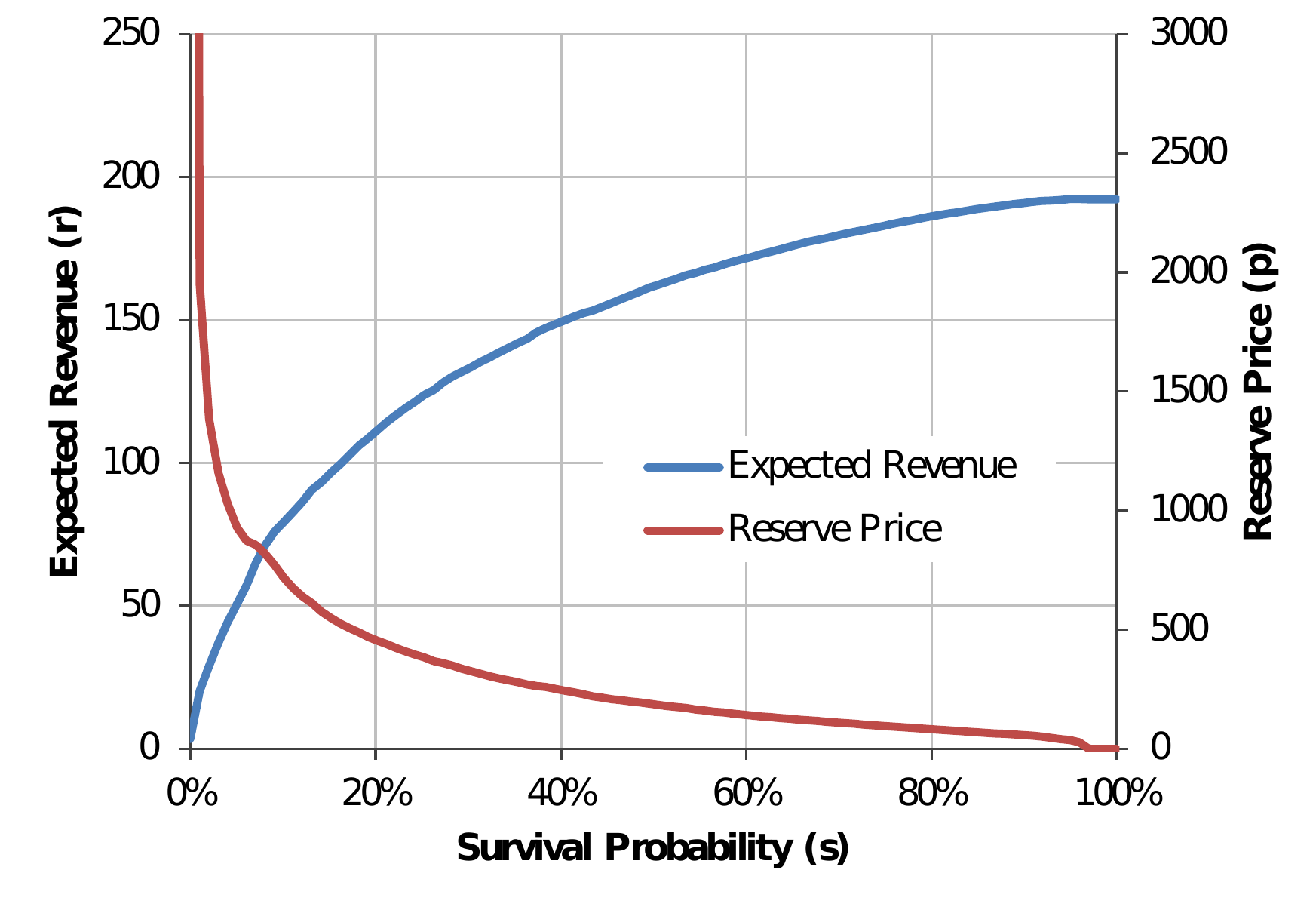} \label{fig:est-adx}}
    \hspace{0.08\textwidth}
    \\
    \subfloat[Parameters of the distribution of log-quality.]{
    \begin{tabular}[b]{c|cccc}
    Type & Ads & $\pi(T)$ & $\mu_T$ & $\Sigma_T$ \\ \hline \\[-2ex]
    $T_1$ & $\{1,2,3\}$ & 0.2 &
    $\left(\begin{smallmatrix} 7.8155 \\ 7.8155 \\ 7.8155 \end{smallmatrix}\right)$ &
    $\left(\begin{smallmatrix}0.3 & 0.1 & 0.1 \\ 0.1 & 0.3 & 0.1 \\ 0.1 & 0.3 & 0.1 \end{smallmatrix} \right)$ \\[1.5ex]
    $T_2$ & $\{1,2\}$ & 0.3 &
    $\left(\begin{smallmatrix} 6.6755 \\ 7.0655 \end{smallmatrix}\right)$ &
    $\left(\begin{smallmatrix} 0.3180 & 0.1649 \\ 0.1649 & 0.3602 \end{smallmatrix} \right)$ \\[1ex]
    $T_3$ & $\{2,3\}$ & 0.1 &
    $\left(\begin{smallmatrix} 6.6355 \\ 7.8055 \end{smallmatrix}\right)$ &
    $\left(\begin{smallmatrix} 0.4347 & 0.2357 \\ 0.2357 & 0.4367 \end{smallmatrix} \right)$ \\[1ex]
    $T_4$ & $\{1,3\}$ & 0.4 &
    $\left(\begin{smallmatrix} 7.2155 \\ 6.9155 \end{smallmatrix}\right)$ &
    $\left(\begin{smallmatrix} 0.23 & 0.05 \\ 0.05 & 0.40  \end{smallmatrix} \right)$ \\[1ex]
    \end{tabular}
    }
    \caption{Description of Instance 1.}
\end{figure}


\section{Experimental Results}\label{sec:experiments}

Two experiments were conducted to study our algorithm. First we study the impact of introducing an AdX on the publisher's yield.
Second, we compare the previously known primal-dual approach to ad allocation that is non-parametric to our approach here which is
parametric.

\subsection{Impact of AdX}\label{sec:impact-adx}

This first experiment explores the potential benefits of introducing an AdX, and how the publisher can take advantage of it. We study the impact of the trade-off parameter $\gamma$ on both objectives, that is, the quality of the impressions assigned to the advertisers, and the revenue from AdX. The limiting choices of $\gamma = 0$, and $\gamma = \infty$ are of particular interest. The first choice represents the case where the publisher disregards the quality of the impressions assigned to the advertisers, and strives to maximize the revenue extracted from AdX. Here the publisher strategically picks the reserve price so that just enough impressions are rejected to satisfy the contracts. In the second choice, the publishers prioritizes the quality of the impressions assigned, and submits the remanent inventory to AdX. We use this case as the baseline to which we compare our method.

The experiment was conducted as follows. First, we set up a grid on the trade-off parameter $\gamma$. Then, we solve the publisher's problem as given in \eqref{DUAL2}. The resulting policies are evaluated using a fluid limit (see \ref{sec:fluid-limit}). Table \ref{table:adx-results} reports the expected quality and revenue for different choices of $\gamma$. Figure \ref{fig:chartAdXratio2} plots the quality, and revenue relative to the baseline case; as a function of $\gamma$. In Figure \ref{fig:chartAdXpareto2} we plot, in a quality vs.\ revenue graph, the objective values of the optimal solutions for the different choices of $\gamma$, together with the Pareto frontier.

\begin{figure}[t]
    \centering
    \subfloat[Ratios to baseline]{\includegraphics[height=5.5cm]{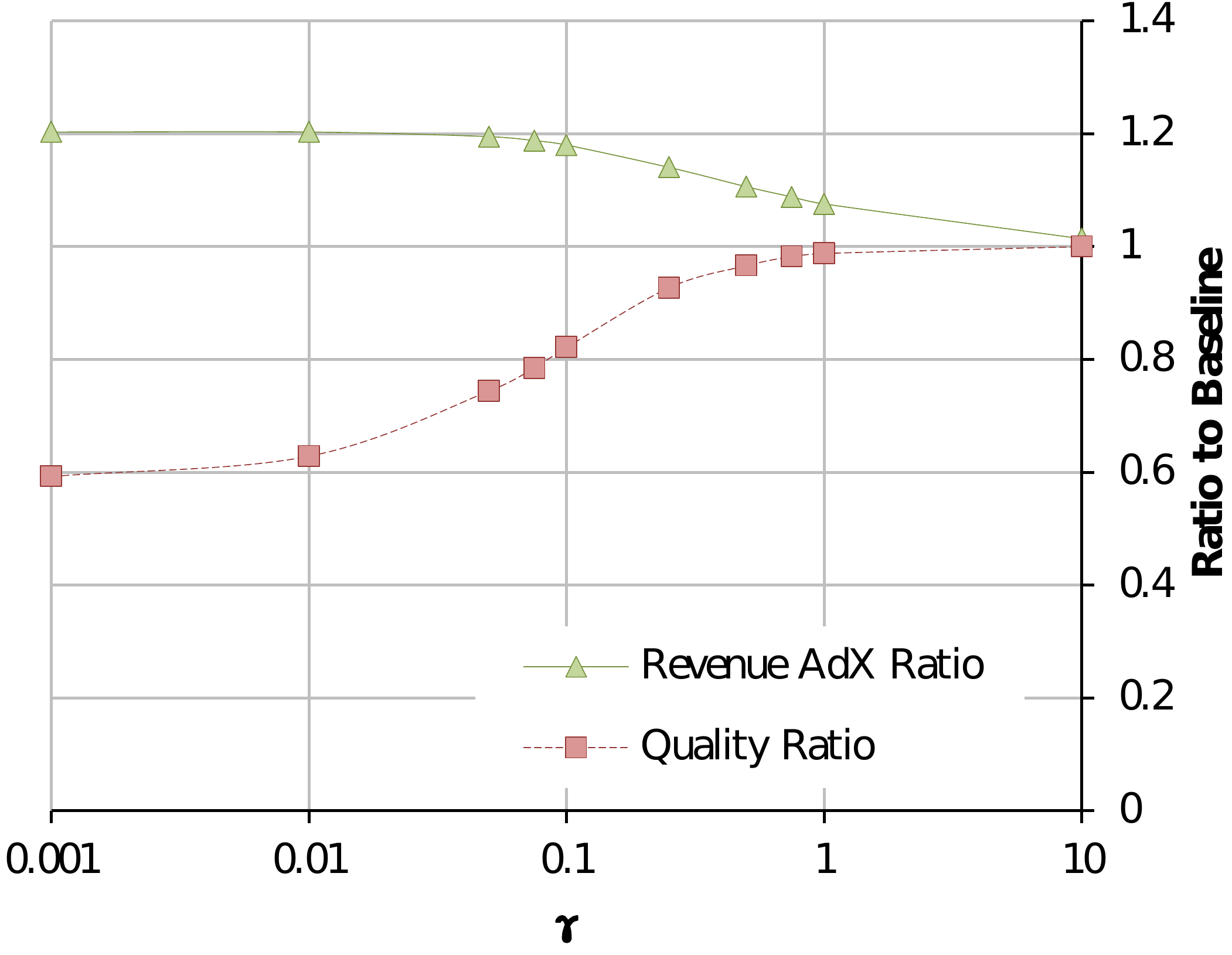}\label{fig:chartAdXratio2}}
    \hspace{0.08\textwidth}
    \subfloat[Pareto Frontier]{\includegraphics[height=5.5cm]{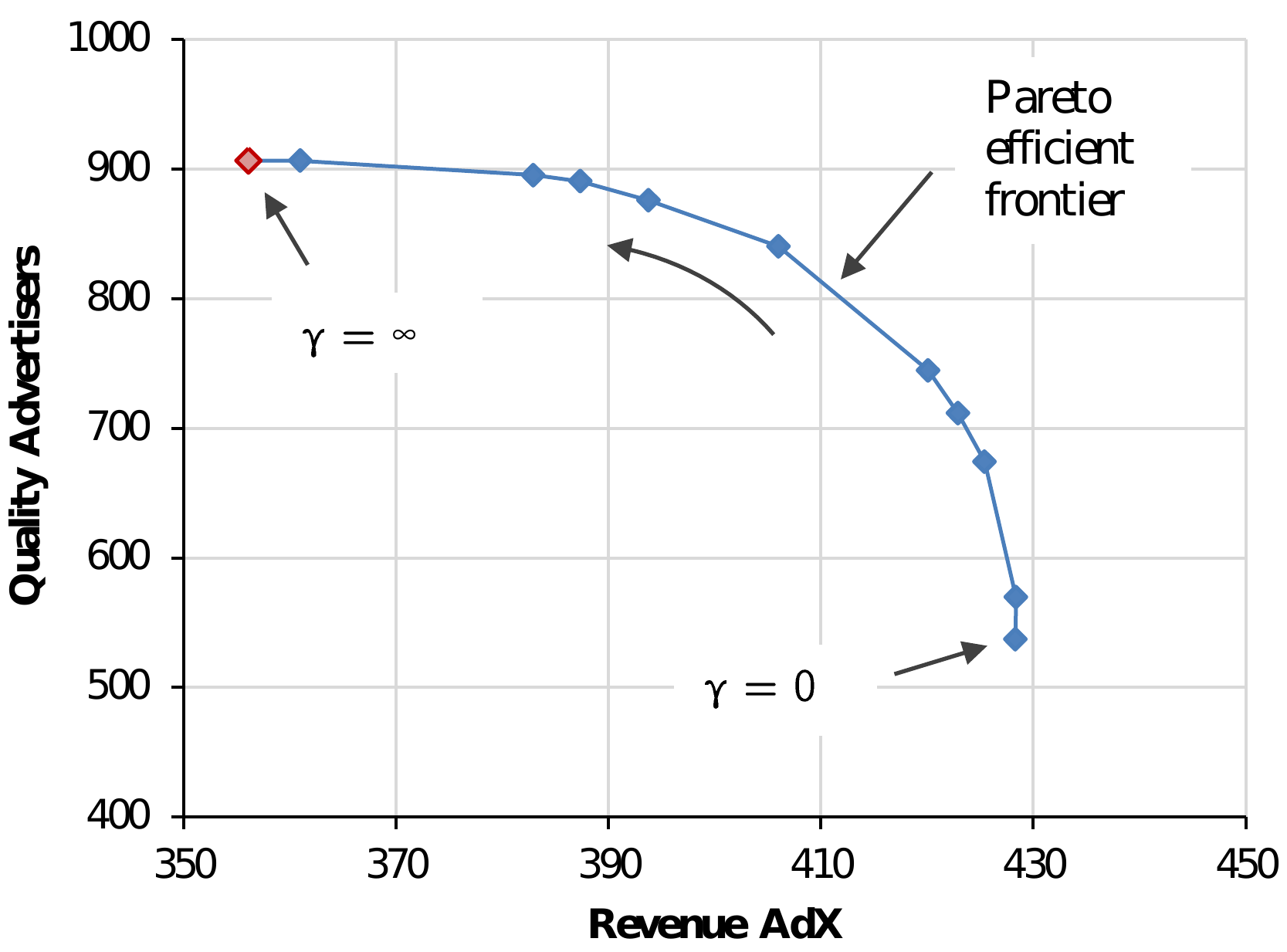}\label{fig:chartAdXpareto2}}
    \footnotesize
    \caption{Figure \ref{fig:chartAdXratio2} plots the quality and revenue relative to the case $\gamma=\infty$, as a function of $\gamma$. Figure \ref{fig:chartAdXpareto2} plots, in a quality vs.\ revenue graph, the objective values of the optimal solutions for the different choices of $\gamma$, together with the Pareto frontier. Both plots are for Instance 2, given in~\S\ref{sec:experiments}.}
    \label{fig:chartAdX}
\end{figure}

\begin{table}
\centering
\fontfamily{phv}\fontseries{mc}\selectfont
\renewcommand{\tabcolsep}{1ex}
\textbf{Instance 1}\\
\vspace{1ex}
\begin{tabular}{c|cccccccccccc}
$\gamma$	& 0	& 0.001	& 0.01	& 0.05	& 0.075	& 0.1\\ \hline
Yield	& 110.94	& 112.78	& 128.81	& 202.46	& 249.11	& 296.95	\\
Quality	& 1107.32	& 1779.07	& 1801.00	& 1864.95	& 1891.19	& 1913.78	\\
Revenue	& 110.94	& 111.00	& 110.80	& 109.21	& 107.27	& 105.57	\\[3ex]
$\gamma$ & 0.25	& 0.5	& 0.75	& 1	& 10	& $\infty$ \\ \hline
Yield   & 590.54	& 1098.43	& 1608.80	& 2122.99	& 20764.82 	& $\infty$\\
Quality	& 1998.31	& 2044.89	& 2055.72	& 2061.33	& 2072.22	& 2075.52\\
Revenue	& 90.97	    & 75.98 	& 67.00 	& 61.66 	& 42.61 	& 38.48\\
\end{tabular}
\vspace{4ex}

\textbf{Instance 2}\\
\vspace{1ex}
\begin{tabular}{c|cccccccccccc}
$\gamma$	& 0	& 0.001	& 0.01	& 0.05	& 0.075	& 0.1\\ \hline
Yield	& 428.73	& 429.02	& 434.22	& 459.57	& 477.39	& 495.66	\\
Quality	& 483.02	& 545.27	& 573.23	& 676.34	& 720.31	& 752.35	\\
Revenue	& 428.73	& 428.47	& 428.49	& 425.75	& 423.37	& 420.42	\\[3ex]
$\gamma$ & 0.25	& 0.5	& 0.75	& 1	& 10	& $\infty$ \\ \hline
Yield   & 617.04	& 834.39	& 1056.05	& 1279.88	& 9425.63	& $\infty$\\
Quality	& 843.47	& 880.69	& 891.49	& 896.89	& 906.46	& 907.05\\
Revenue	& 406.17	& 394.05	& 387.43	& 382.99	& 360.99	& 356.11\\
\end{tabular}
\caption{Expected yield, advertisers' quality and revenue from AdX for two instances, and different choices of $\gamma$.}
\label{table:adx-results}
\end{table}

\paragraphtitle{Discussion.} Results confirm that, as we increase the trade-off parameter $\gamma$, the quality of the impressions assigned to the advertisers increases, while the revenue from AdX subsides. Interestingly, starting from the baseline case that disregards AdX ($\gamma = \infty$), we observe that the revenue from AdX can be substantially increased by sacrificing a small fraction of the overall quality of the impressions assigned. For instance, by exploiting strategically the AdX, the publisher can increase AdX's revenue by 8\% by giving up only 1\% quality. Conversely, starting from the case that disregards the advertiser's quality ($\gamma = 0$), the publisher can raise the quality in a large amount at the expense of a small decrease in AdX's revenue.

Alternatively, the previous analysis can be understood in terms of the Pareto frontier. Results show that the Pareto frontier is highly concave, relatively horizontally flat around $\gamma = \infty$, and vertically flat around $\gamma = 0$. This explains the huge marginal improvements at the extremes. There are several advantages to the quality vs.\ revenue representation. First, the Pareto frontier allows for quick grasp of the nature of the operation. When the publisher's current operation is sub-optimal, its performance point should lie in the interior of the frontier. In this case, the Pareto frontier allows the publisher to measure its efficiency, and quantify the potential benefits an optimal policy may introduce. Second, when the choice of the trade-off parameter is not clear, the publisher may impose a lower bound on the overall quality of the impressions, and instead maximize the total revenue from AdX. The efficient frontier provides the maximum attainable revenue, and the proper $\gamma$ to achieve the quality constraint.

\subsection{Comparison with the Primal-Dual Approach}\label{sec:PD-compare}

In this second experiment we study the performance the our algorithm, and contrast it with a Primal-Dual (PD) method.
This experiment is discussed in detail in~\S\ref{sec:withoutAdX}, but we give the full experiments and discussion here for the benefit of the reader. Since no existing PD method is known yet for the AdX problem, we consider instead the case with no AdX. The Primal-Dual approach~\citep{DevenurHayes2009}, uses a sample from data to estimate the dual variables and uses it in a bid-price control policy. In contrast, our algorithm, as stated, assumes the parameters of the quality distribution are known, and uses that to estimate the dual variables. So we do not need to use a sample.  Of course in practice, the parameters need to be learned, and so we would need to use a sample of the data in order to learn them; but in many settings (including online advertising) it is reasonable to assume that we at least know the {\em form} of the distribution (e.g., normal, exponential, Zipf), albeit not the specific parameters (mean, variance, covariance, etc.).  The techniques in~\cite{DevenurHayes2009} are powerful {\em because} they don't need to assume anything about the distribution, but it is important to ask what can be gained from knowing the {\em form} of the distribution, which is what we do in the remainder of this section.

In order to objectively assess the performance of our algorithm we adopt the user type model described in \S\ref{sec:data-model} as a generative model. The generative model is used to generate sample data on which both our algorithm and a PD method are tested. The advantages of adopting a generative model are twofold. First, it allows us to compute the truly optimal policy $\mu^\OPT$. Second, the true performance of any policy can be evaluated efficiently using a fluid limit (see Section~\ref{sec:fluid-limit}).

The computational experiment is conducted as follows. First, a \emph{training} data set of $M$ impressions is generated. We denote the sampled quality vectors by $\{q_m\}_{m=1}^M$. Then, we estimate the parameters of the model on the training set as follows. For each type we estimate the type probabilities $\hat{\pi_T}$; and mean $\hat{\mu_T}$, and covariance matrix $\hat{\Sigma_T}$ of the logarithm of the qualities. Next, the dual problem \eqref{DUAL2} is solved on the estimated parametric model using a Gradient Descent Method as described in \S\ref{sec:computation}. Note that, since no AdX is considered, the maximum expected revenue function $R(\cdot)$ is the identity. Using the optimal solution $v^\EST$ we construct a policy, which be refer as $\mu^\EST$.

Simultaneously, we employ the PD method on the training data. The PD method amounts to solving a sample average approximation of problem \eqref{DUAL2}, which results in the following linear program
\begin{align}
    \min_{v,\lambda} & \;  \frac 1 M \sum_{m=1}^M \lambda_m  + \sum_{a \in \mathcal{A}} \rho_a v_a \label{eq:DevenurHayes2009}\\
    \text{s.t. } & \lambda_m + v_a \ge q_{m,a},	\qquad \forall m,a \nonumber \\
	& \lambda_m \ge 0 \qquad \forall m. \nonumber
\end{align}
The linear program is solved using CPLEX 12. Again, using the dual optimal solution $v^\PDual$ we construct a policy $\mu^\PDual$.

Afterwards, we assess the performance of both policies using a fluid limit. These steps are replicated on $50$ different training sets. Table \ref{table:PD-comp} reports the average results over the training sets for different sizes of training sets, and instances. Plots of the results for a given instance are shown in Figure \ref{fig:chart63}.

\begin{table}
\centering
\fontfamily{phv}\fontseries{mc}\selectfont
\renewcommand{\tabcolsep}{2ex}

\textbf{Instance 1} (A = 3, T = 4, OPT = 2075.09)\\
\vspace{1ex}
\begin{tabular}{c|cc|cc}
\multirow{2}{8ex}{Training Set Size}	& \multicolumn{2}{c|}{EST}	& 	\multicolumn{2}{c}{PD}\\
	& mean	& std.dev.	& mean.	& std.dev \\ \hline
100	& 2004.16 (3.42\%)	& 33.978	& 1990.32 (4.08\%)	& 37.552 \\
1000	& 2053.41 (1.04\%)	& 10.008	& 2047.92 (1.31\%)	& 12.365 \\
2500	& 2065.12 (0.48\%)	& 4.956	& 2062.76 (0.59\%)	& 5.838 \\
5000	& 2068.44 (0.32\%)	& 3.681	& 2066.99 (0.39\%)	& 4.224 \\
\end{tabular}
\vspace{4ex}

\textbf{Instance 2} (A = 6, T = 10, OPT = 907.44)\\
\vspace{1ex}
\begin{tabular}{c|cc|cc}
\multirow{2}{8ex}{Training Set Size}	& \multicolumn{2}{c|}{EST}	& 	\multicolumn{2}{c}{PD}\\
	& mean	& std.dev.	& mean.	& std.dev \\ \hline
1000	& 889.58 (1.97\%)	& 8.861	& 882.77 (2.72\%)	& 12.829\\
2500	& 894.43 (1.43\%)	& 7.485	& 887.64 (2.18\%)	& 10.418\\
5000	& 898.59 (0.98\%)	& 5.231	& 892.51 (1.65\%)	& 7.625\\
10000	& 901.13 (0.70\%)	& 3.588	& 897.42 (1.10\%)	& 4.692\\
25000	& 904.69 (0.30\%)	& 1.712	& 901.97 (0.60\%)	& 2.720\\
50000	& 905.03 (0.27\%)	& 1.267	& 903.44 (0.44\%)	& 1.567\\
\end{tabular}
\vspace{4ex}

\textbf{Instance 3} (A = 17, T = 15, OPT = 894.82)\\
\vspace{1ex}
\begin{tabular}{c|cc|cc}
\multirow{2}{8ex}{Training Set Size}	& \multicolumn{2}{c|}{EST}	& 	\multicolumn{2}{c}{PD}\\
	& mean	& std.dev.	& mean.	& std.dev \\ \hline
2500	& 859.83 (3.91\%)	& 9.937	& 849.44 (5.07\%)	& 14.615\\
5000	& 868.61 (2.93\%)	& 5.870	& 861.06 (3.77\%)	& 7.954\\
10000	& 877.59 (1.92\%)	& 5.226	& 873.46 (2.39\%)	& 6.577\\
25000	& 884.04 (1.20\%)	& 2.585	& 881.13 (1.53\%)	& 3.747\\
50000	& 887.34 (0.84\%)	& 1.926	& 885.11 (1.08\%)	& 2.728\\
\end{tabular}
\vspace{4ex}

\textbf{Instance 4} (A = 14, T = 10, OPT = 928.76)\\
\vspace{1ex}
\begin{tabular}{c|cc|cc}
\multirow{2}{8ex}{Training Set Size}	& \multicolumn{2}{c|}{EST}	& 	\multicolumn{2}{c}{PD}\\
	& mean	& std.dev.	& mean.	& std.dev \\ \hline
2500	& 892.55 (3.90\%)	& 12.886	& 888.88 (4.29\%)	& 13.427\\
5000	& 903.04 (2.77\%)	& 8.537	& 901.79 (2.90\%)	& 10.277\\
10000	& 911.25 (1.88\%)	& 6.951	& 909.96 (2.02\%)	& 6.935\\
25000	& 917.30 (1.23\%)	& 3.353	& 915.81 (1.39\%)	& 3.705\\
50000	& 921.36 (0.80\%)	& 2.668	& 920.11 (0.93\%)	& 2.716\\
\end{tabular}
\caption{Experimental results comparing the performance of our parametric method (EST) with the non-parametric primal-dual method (PD). No AdX present in this experiment.}
\label{table:PD-comp}
\end{table}

\begin{figure}[t]
    \centering
    \subfloat[Average Yield]{\includegraphics[width=0.45\textwidth]{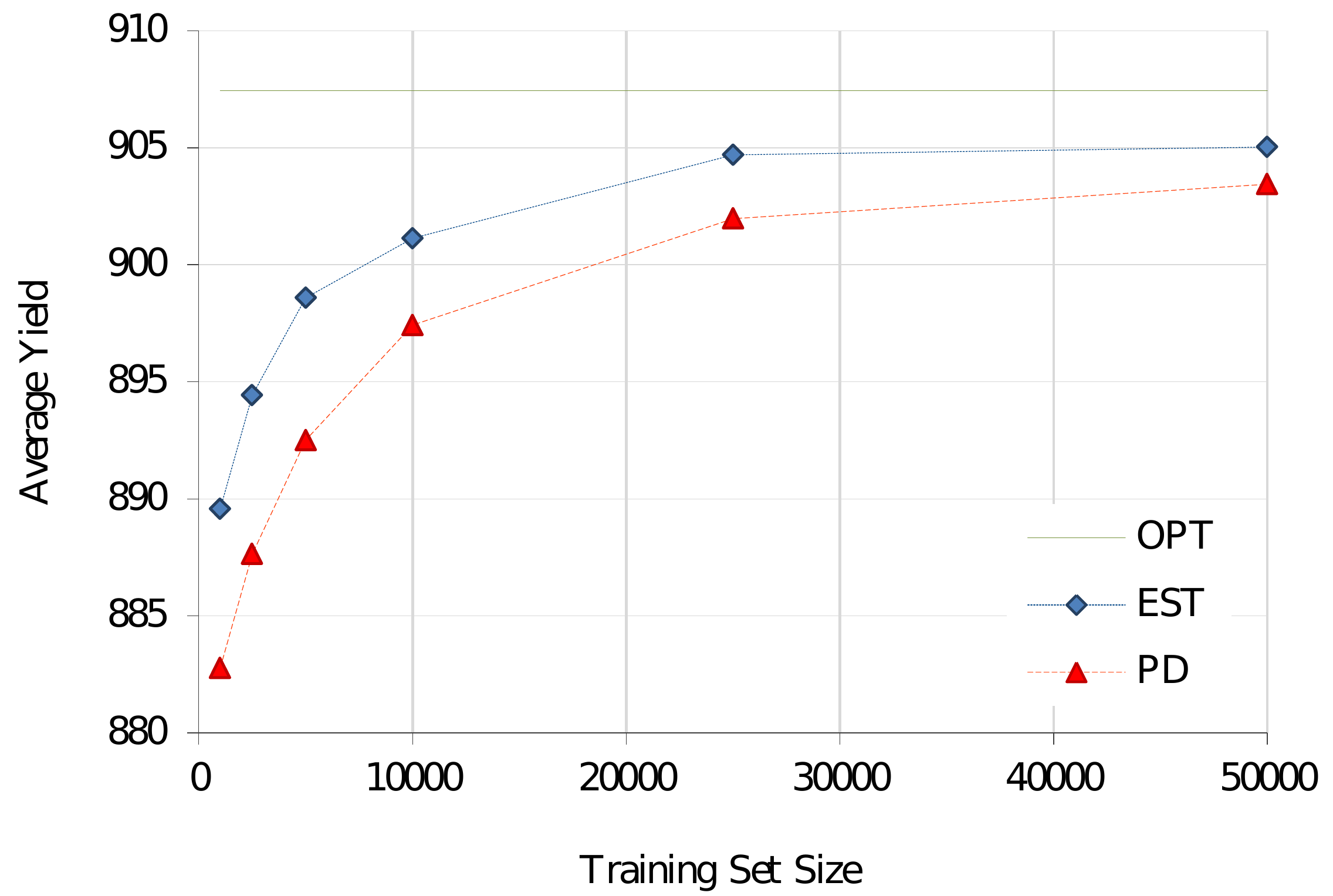}}
    \hspace{0.08\textwidth}
    \subfloat[Std. Dev. of Yield]{\includegraphics[width=0.45\textwidth]{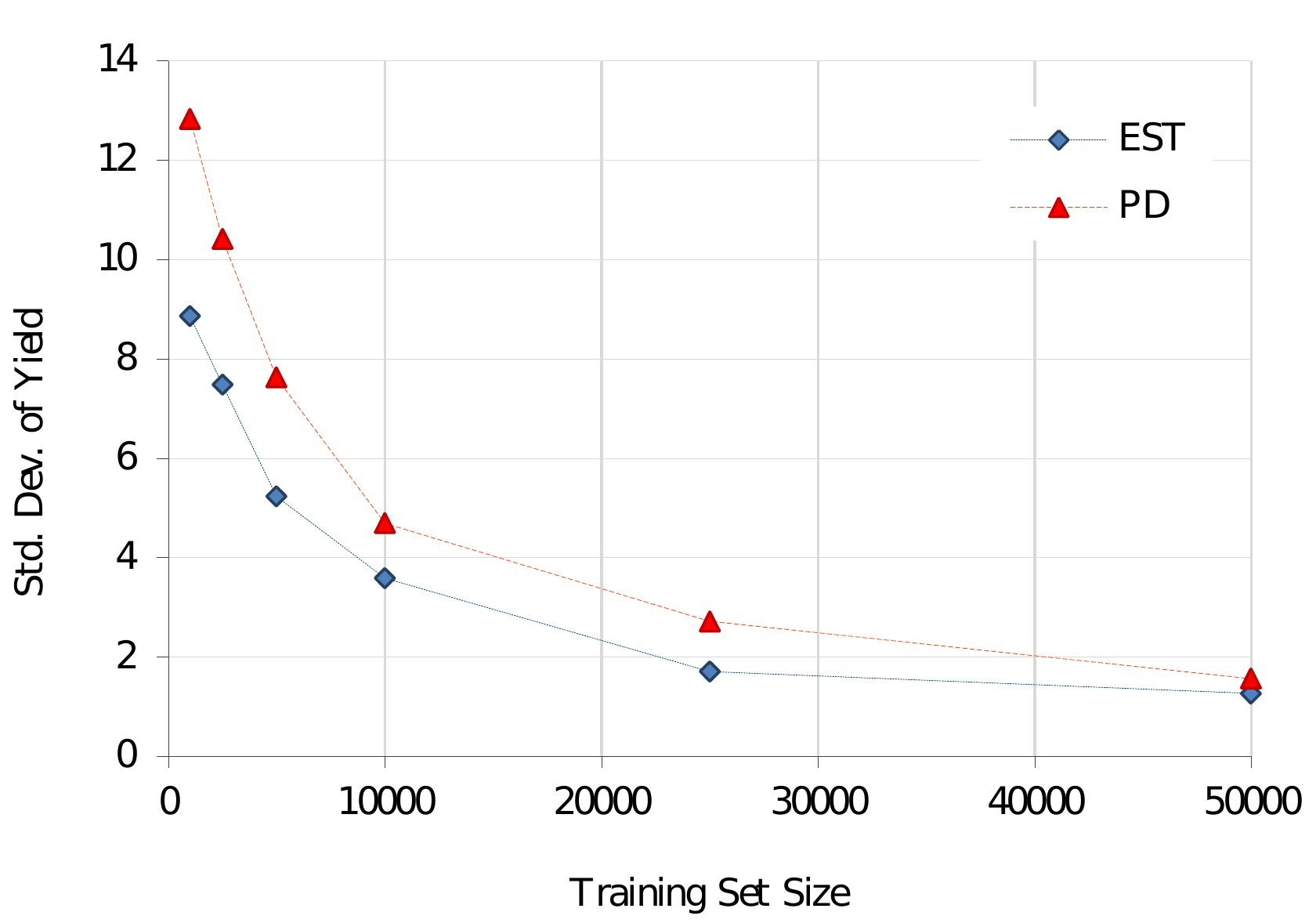}}
    \caption{Average (a) and standard deviation (b) of yield as a function of training set sample size;  results are shown for the parametric method (EST) based on our policy $\mu$ and the primal-dual method (PD) as in~\cite{DevenurHayes2009}. Both policies converge to the optimal yield, but EST converges faster, and with less variance.}
    \label{fig:chart63}
\end{figure}

\paragraphtitle{Discussion.} Results show that for both algorithms, as the size of the training set increases, the optimality gap decreases at a rate of $O(M^{\frac 1 2})$. However, the parametric method performs uniformly better that the non-parametric PD method. Additionally, the variability across different training sets diminishes as the size of the training set increases. Indeed, we observe that the standard deviation over training sets converges to zero for both methods, but the convergence is faster for the parametric one. In some sense this is expected, since the true data model follows exactly the distributional assumptions. However, the PD method is expected to be more robust to model misspecification.

Another experiment, though results are not reported, was conducted to test the strength of the parametric method on real data. We observed that, when the training set is small (around thousands), the parametric method performs better than the non-parametric one. However, as the sample size increases the non-parametric method outperforms the other. The rationale for this behavior is that, when data is scarce, the parametric method can exploit the distributional assumptions to reconstruct a fair representation of the data. However when the training set is larger, the fit of our model to real data is not perfect, and the non-parametric method can withstand deviations more robustly.

\section{Extensions}
In this section we consider a number of extensions of the model and policy from the previous section.

\subsection{Target Quality Constraints}\label{sec:quality-constraints}

In section~\ref{sec:obective} we discussed an alternate formulation in which the publisher imposes a minimum overall quality for the impressions assigned to the advertisers. This might be more natural for some publishers; they might feel more comfortable specifying target quality constraint than picking a Lagrange multiplier to weight the impact of quality in the objective. Additionally, in some settings the advertisers themselves might demand that certain level of quality is guaranteed.

In the following, we impose that the average quality of the impressions assigned to advertiser $a$ is larger or equal than a threshold value $\ell_a$. Now, the publisher would strive to maximize the revenue from AdX, while complying with the target quality constraints, and the contractual obligations. The one-impression DAP would be similar, except that the objective only accounts for AdX's revenue, and the inclusion of the constraints
\begin{align}
    \mathbb{E} \left[ i_{a}(Q) Q_{a} \right] \ge \ell_a, \qquad \forall a \in \mathcal{A}. \label{DAPQ:quality}
\end{align}

We attack the problem, as done before, by considering its dual. Let $\gamma_a \ge 0$ be the Lagrange multiplier associated to \eqref{DAPQ:quality}. As a side note, problem \eqref{DAP} can be interpreted as the Lagrange relaxation of our new problem w.r.t.~the target quality constraints, and the dual variables $\gamma$ as the shadow price of the target quality constraints. The new constraints preserve the convexity of the program, and strong duality still holds. Following the same steps, we obtain the new dual problem
\begin{align*} 
	\min_{\gamma\ge0,v} & \; \left\{ \mathbb{E} R\left(\max_{a \in \mathcal{A}_0} \{\gamma_a Q_a - v_a\}\right) + \sum_{a \in \mathcal{A}} v_a \rho_a - \gamma_a \ell_a \right\},
\end{align*}
which still is a convex minimization problem. The publisher might now jointly optimize over $v$, and $\gamma$ to construct a provably good policy. Additionally, in a similar fashion to Proposition~\ref{prop:gradient}, we may compute the directional derivative of the objective w.r.t.~the dual variables $\gamma$.

Regarding the performance the bid-price control $\mu^B$, Theorem~\ref{thm:main} still holds, and the policy asymptotically attains the optimal revenue from AdX, while complying with the delivery targets. However, we still need to argue about the expected average quality assigned to the advertisers. Unfortunately, for those advertisers whose constraint \eqref{DAPQ:quality} is binding, our algorithm might not attain the desired quality target. 
Nevertheless, from our asymptotic analysis we may show that the expected average quality is lower bounded by
\begin{align*}
	\mathbb{E} \left[ \frac 1 N \sum_{n=1}^N \left[1-s_n^\mu(Q_n)\right] I_{n,a}^\mu(Q_n) Q_{n,a} \right]
    \ge \frac {\mathbb{E} N^* } N \mathbb{E} \left[ i^*_{a}(Q) Q_{a} \right] \ge \left(1 - \frac 1 {\sqrt N} K(\rho)\right) \ell_a.
\end{align*}
Hence, for advertisers with binding constraint \eqref{DAPQ:quality}, albeit not feasible, the expected average quality becomes arbitrary close to the threshold value as the number of impressions in the horizon increases. On the other hand, for the remaining advertisers whose target quality constraint is not binding, the expected average quality will surpass the threshold for suitably large $N$.


\subsection{AdX with Multiple Bidders}\label{sec:AdX-multiple-bidders}

Here we generalize our results to the case where multiple buyers participate in the Ad Exchange. We model AdX as an auction with $K$ risk neutral buyers. The publisher believes that individual valuations are drawn independently from the same distribution with c.d.f $F(\cdot)$, density $f(\cdot)$, and support $[p_0,p_\infty]$. Moreover, we assume that the distribution of the values have increasing failure rates, are absolutely continuous and strictly monotonic. The publisher must choose the reservation price $p$ that maximizes her expected revenue given that her value for the impression is $c\ge0$. As before, we denote by $R(c)$ the optimal expected revenue of the publisher.

\cite{Myerson1981} argued that under our assumptions the optimal mechanism is a Vickrey or second-price sealed-bid auction. Moreover, it is known that in such auctions bidding the true valuation is a dominant strategy for the buyers, and that the optimal reservation price $p^*(c)$ is independent of the number of buyers~\citep{LaffontMaskin1980}.

Let $B_{1:K}$ and $B_{2:K}$ be the order statistics which denote the highest and the second highest bid respectively. Given a reserve price $p$, the item is sold if $B_{1:K} \ge p$, i.e., there is some bid higher than the reserve price. The winning buyer pays the second highest bid, or alternatively $\max\{B_{2:K},p\}$, since the seller should receive at least the reserve price $p$. Therefore, the publisher's maximization problem is
\begin{align*}
	R(c) &= \max_{p \ge 0} \mathbb{E} \left[ \mathbf{1}\{B_{1:K} \ge p\} \max\{B_{2:K},p\} + \mathbf{1}\{B_{1:K} < p\} c \right].
\end{align*}
Notice that the setup of Section \ref{sec:AdX} can be consider as a particular case of a second-price auction in which we have only one bidder and $B_{2:K} = 0$.

Recall that, instead of reserve prices, we casted our problem in terms of survival or winning probabilities. Then, letting $s$ be the probability than the impression is sold, we have that $s = \mathbb{P} \{ B_{1:K} \ge p \} = 1 - F^K(p)$ since valuations are i.i.d. Conversely, the reserve price as a function of the survival probability is given by $p(s) = \bar{F}^{-1} (1 - (1-s)^{1/K})$, which is well-defined due to the strict monotonicity of the c.d.f. In terms of survival probabilities, the problem is now
\begin{align*}
	R(c) &= \max_{0 \le s \le 1} r(s) + (1-s) c,
\end{align*}
where we defined the revenue function as $r(s) = r(p(s))$, and $r(p) = \mathbb{E} \left[ \mathbf{1}\{B_{1:K} \ge p\} \max\{B_{2:K},p\} \right]$.

The next proposition shows that the revenue function is regular, and as a consequence all previous results hold for the case with multiple bidders.

\begin{prop} Under the previous assumption, the revenue function $r(s)$ is regular. Moreover, the optimal reserve price $p^*(c)$ solves
	\begin{align*}
		\frac {\bar{F}(p)} {f(p)} = p - c,
	\end{align*}
when $c \in [p_0 - 1/f(p_0), p_\infty]$. When the opportunity cost is higher than the null price $(c > p_\infty)$, the publisher bypasses the exchange $(p^*(c) = p_\infty)$. Finally, when the opportunity cost is low enough $(c<p_0 - 1/f(p_0))$, the impression is kept by the highest bidder $(p^*(c) = p_0)$.
\end{prop}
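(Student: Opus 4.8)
The plan is to pass to the reserve-price variable, obtain a closed form for $dr/ds$, and read off from it both the regularity of $r$ and the three-case structure of $p^*(c)$. First I would rewrite the per-impression revenue, for a reserve $p$, as $r(p)=p\,\mathbb{P}\{B_{1:K}\ge p\}+\mathbb{E}[(B_{2:K}-p)^+]$; this holds because on $\{B_{1:K}\ge p\}$ one has $\max\{B_{2:K},p\}-p=(B_{2:K}-p)^+$, while $(B_{2:K}-p)^+=0$ whenever $B_{1:K}<p$. Since the bids are i.i.d.\ with c.d.f.\ $F$ on $[p_0,p_\infty]$, $\mathbb{P}\{B_{1:K}\ge p\}=1-F^K(p)=s$, and by the tail formula $\mathbb{E}[(B_{2:K}-p)^+]=\int_p^{p_\infty}\mathbb{P}\{B_{2:K}>t\}\,dt$, so in the variable $s$ (where $p=p(s)=\bar F^{-1}(1-(1-s)^{1/K})$ is a strictly decreasing $C^1$ bijection of $[0,1]$ onto $[p_0,p_\infty]$ by absolute continuity and strict monotonicity of $F$, with $p(0)=p_\infty$, $p(1)=p_0$),
$$
r(s)=p(s)\,s+\int_{p(s)}^{p_\infty}\mathbb{P}\{B_{2:K}>t\}\,dt .
$$
Differentiating by the Leibniz rule, the $p'(s)$-terms combine through the identity $\mathbb{P}\{B_{1:K}\ge p\}-\mathbb{P}\{B_{2:K}>p\}=\mathbb{P}\{\text{exactly one bid clears }p\}=K\bar F(p)F^{K-1}(p)$ together with $p'(s)=-1/(KF^{K-1}(p(s))f(p(s)))$, and everything collapses to
$$
\frac{dr}{ds}(s)=p(s)-\frac{\bar F(p(s))}{f(p(s))}=\varphi(p(s)),\qquad \varphi(p):=p-\frac{\bar F(p)}{f(p)},
$$
the Myerson virtual valuation evaluated at the reserve.

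Regularity of $r$ then follows quickly. Non-negativity and boundedness are immediate from $0\le r(p)\le p_\infty$; continuity of $r(s)$ is continuity of $r(p)$ composed with the continuous $p(s)$; and $\lim_{s\to0}r(s)=0$ because $s\to0$ forces $p(s)\to p_\infty$, so that both $\mathbb{P}\{B_{1:K}\ge p\}$ and $\mathbb{E}[(B_{2:K}-p)^+]$ vanish by dominated convergence. For concavity, the IFR hypothesis is precisely that $\bar F/f$ is non-increasing, hence $\varphi$ is non-decreasing; since $p(s)$ is strictly decreasing, $dr/ds=\varphi(p(s))$ is non-increasing in $s$, i.e.\ $r$ is concave. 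Thus $r$ is a regular revenue function, Proposition~\ref{thm:revenue-regularity} applies, and all of the earlier results carry over to the multi-bidder exchange.

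For the optimal reserve, the concave objective $r(s)+(1-s)c$ in $R(c)=\max_{s\in[0,1]}\{r(s)+(1-s)c\}$ has derivative $\varphi(p(s))-c$, which is non-increasing in $s$ and sweeps the interval $[\varphi(p_\infty),\varphi(p_0)]=[p_\infty,\,p_0-1/f(p_0)]$ as $s$ runs from $0$ to $1$ (using $\bar F(p_\infty)=0$ and $\bar F(p_0)=1$). Hence: if $c>p_\infty$ the objective is non-increasing and the maximizer is $s^*=0$, i.e.\ $p^*(c)=p_\infty$, so the exchange is bypassed; if $c<p_0-1/f(p_0)$ it is non-decreasing and $s^*=1$, i.e.\ $p^*(c)=p_0$, so the item always goes to the highest bidder; and for $c\in[p_0-1/f(p_0),p_\infty]$ the maximizer is interior, characterized by $\varphi(p^*(c))=c$, equivalently $\bar F(p^*)/f(p^*)=p^*-c$ (and since $\varphi$ does not involve $K$, this recovers the claimed independence of $p^*(c)$ from the number of bidders). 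The main obstacle I anticipate is the differentiation step: carrying out the Leibniz rule cleanly and verifying the exact cancellation that collapses $dr/ds$ to $\varphi(p(s))$, while ensuring the smoothness hypotheses on $F$ make $p(\cdot)$ a $C^1$ bijection and $\varphi$ continuous and strictly increasing; once that is in hand the rest is routine.
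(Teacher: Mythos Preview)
Your proposal is correct and follows essentially the same route as the paper: both arguments reduce to showing $dr/ds=p(s)-\bar F(p(s))/f(p(s))$, invoke the IFR assumption to get concavity, and then read off the three cases for $p^*(c)$ from the sign of $\varphi(p(s))-c$ at the endpoints $s=0,1$. The only cosmetic difference is that the paper writes $r(p)$ via the explicit joint density of $(B_{1:K},B_{2:K})$ and computes $dr/dp$ before chain-ruling, whereas your decomposition $r(p)=p\,\mathbb{P}\{B_{1:K}\ge p\}+\mathbb{E}[(B_{2:K}-p)^+]$ together with the tail formula lets you differentiate directly in $s$ and makes the cancellation a bit more transparent; the substance is identical.
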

\begin{proof}{Proof.}
The joint distribution of $B_{1:K}$ and $B_{2:K}$ has a density function~\citep{LaffontMaskin1980}
$$
	f(b_1,b_2) = \begin{cases} K(K-1) F(b_2)^{K-2}f(b_1)f(b_2) & \text{if } b_1 \ge b_2 \\ 0 & \text{otherwise} \end{cases}.
$$
Then, we have that
\begin{align*}
	r(p) &= \mathbb{E} \left[ \mathbf{1}\{B_{2:K} \ge p\} B_{2:K} + p  \mathbf{1}\{B_{1:K} \ge p, B_{2:K} < p\} \right] \\
	&= \int_p^\infty \int_p^{b_1} b_2 f(b_1, b_2) \text{ d}b_2 \text{ d}b_1 + p \int_p^\infty \int_0^p f(b_1, b_2)  \text{ d}b_2 \text{ d}b_1 \\
	&= K(K-1) \int_p^\infty b_2 F(b_2)^{K-2} f(b_2) (1 - F(b_2)) \text{ d}b_2 + K p F(p)^{K-1} (1-F(p))
\end{align*}
Continuity of $r(s)$ follows because the p.d.f. is continuous, and $p(s)$ is continuous (if F not strictly monotone, the inverse may have jumps). Additionally, we may bound the revenue by
\begin{align*}
	r(p) &\le \mathbb{E} \left[ \mathbf{1}\{B_{1:K} \ge p\} B_{1:K} \right] \le K \mathbb{E} \left[ \mathbf{1}\{B \ge p\} B \right] \le K \mathbb{E} B < \infty,
\end{align*}
the first inequality follows because $B_{1:K}$ is the maximum, the second because any order statistic is upper bounded by the sum of the bids, and the fourth because bids are integrable. Moreover, integrability of $B$ implies that $\lim_{p\rightarrow\infty} r(p) = 0$.

Next, we turn to the concavity of $r(s)$. Differentiating w.r.t to $p$ we get
\begin{align*}
	\frac {d r} {d p} = K F(p)^{K-1} ( \bar{F}(p) - p f(p) ).
\end{align*}
Then, using the fact that $\frac {d s} {d p} = - K F(p)^{1-k} / f(p)$ we get from the composition rule that
\begin{align*}
	\frac {d r} {d s} = \left. \frac {d r} {d p} \right|_{p(s)} \frac {d p} {d s} =  p(s) - \frac 1 {h(p(s))},
\end{align*}
where $h(p) = f(p) / \bar{F}(p)$ is the hazard rate of the bidder's valuation. Because $p(s)$ is non-increasing in $s$ and the $h(p)$ is non-decreasing in $p$, we conclude that $\frac {d r} {d s}$ is non-increasing. Thus, the revenue function is concave.


Finally, notice that the that derivative of the objective w.r.t to $s$ is
\begin{align}\label{R-deriv-obj}
    p(s) - \frac 1 {h(p(s))} - c,
\end{align}
which is non-increasing. When $c > p_\infty$ we have that \eqref{R-deriv-obj} is negative, so $s^*(c) = 0$ and $p^*(c) = p_\infty$. Similarly, when $c<p_0 - 1/h(p_0)$ we that \eqref{R-deriv-obj} is positive, so $s^*(c) = 1$ and $p^*(c) = p_0$.\Halmos
\end{proof}



\subsection{AdX with User Information}\label{sec:AdX-user-info}

In most systems, the publisher shares some user information with the exchange. In turn, the exchange may partially disclose the user information to their advertisers. The advertisers may react to this information, and bid strategically~\citep{Muthu2009}. In this section we extend our model to the case when the bids from AdX are correlated with the quality of the impression (a surrogate for user information). For simplicity we consider the case of one bidder. Nevertheless, our analysis can be easily extended to the general case.

Let $\bar{F}(p | Q) = \mathbb{P} \{ B \ge p \mid Q\}$ be the conditional probability that the bid from AdX is greater than $p$ given that the impression quality vector is $Q$. Additionally, we define the \emph{conditional revenue function} as $r(s|Q) = s \bar{F}^{-1}(s|Q)$. The publisher can exploit the correlation between user information and bids to update his prior on AdX bids. Conditioning on the impression quality, we obtain that the maximum expected revenue under opportunity cost $c$, denoted by $R(c|Q)$, is now
\begin{align}\label{R(c|Q)}
	R(c|Q) &= \max_{0 \le s \le 1} r(s|Q) + (1 - s) c.
\end{align}
In order to apply the results from the previous sections we require that the conditional revenue function $r(\cdot|Q)$ is regular for all qualities $Q$ almost surely.

The rest of the analysis follows in a straightforward way by conditioning on the impression quality. The dual problem \eqref{DUAL2} now reads
\begin{align*}
	\min_{v} & \; \left\{ \mathbb{E} R\left(\max_{a \in \mathcal{A}_0} \{Q_a - v_a\} \mid Q\right) + \sum_{a \in \mathcal{A}}v_a \rho_a \right\},
\end{align*}
where we replaced the maximum expected revenue by $R(\cdot|Q)$. It is worth noting that now the optimal reserve price to be submitted to AdX depends both on the maximum contract adjusted quality, and the actual realization of the quality vector.

\section{Conclusion}

Ad Exchanges are an emerging market for the real-time sale of online ad slots on the Internet. Despite the popularity of this emerging market, many publishers are currently not jointly optimizing their inventory over AdX and their traditional reservations. Instead they first aim to fulfill their reservations and then submit their remnant inventory to the exchange. In this work, we show that there are considerable advantages for the publishers from jointly optimization over both channels. Publishers may increase their revenue streams without giving away the quality of service of their reservations contracts, which still represents a significant portion of their advertising yield. Our approach helps publishers determine when and how to access AdX to complement their contract sales of impressions. In particular, we model the publishers' problem as a stochastic control program and derive an asymptotically optimal policy with a simple structure: a bid-price control extended with a pricing function for the exchange. We also hope our insights here will help understand ad allocation problems more deeply.

Internet advertising, and in particular AdX, is likely to prove to be a fertile area of research. There are several promising directions of research stemming from this work. One intuitive approach to improve the performance of a control, which is appealing for its simplicity, consist on resolving the deterministic approximation periodically throughout the horizon. In a follow-up work we intend to show that one can indeed improve on the static control and obtain sharper bounds by resolving the DAP. Another problem that needs further study is that of learning in the case of unknown distributions, which is of great importance given the fast-paced and changing nature of the Internet. There exists independent research on online algorithms for capacity allocation and online pricing for repeated auctions, but none on the joint optimization problem. Finally, as more publishers reach out for AdX, advertisers will have the opportunity to buy their inventory from either market. The existence of two competing channels, the exchange as a spot market and the reservations as future market, introduces several interesting research questions. For example, how should publishers price their contracts and allocate their inventory, and how should advertisers hedge their campaign between these two markets. We hope that this work pave the way for further research on this important topic.

\ACKNOWLEDGMENT{%
We thank Omar Besbes, Ciamac Moallemi, Daryl Pregibon, Ana Radovanovic, Nemo Semret, and Nicol\'as Stier for helpful discussions.
}


\vspace{2em}

\begin{APPENDICES}

\section{Proofs of Statements}

\small

\begin{proof}{Proof of Proposition~\ref{thm:revenue-regularity}.}
First, observe that for all $c$ the objective function of \eqref{R(c)} is concave and continuous in $s$, and the feasible set is compact. Hence, by Weierstrass Theorem the set of optimal solutions is non-empty and compact. Thus, both $R(c)$ and $s^*(c)$ are well-defined.

Second, $R(c) \ge c$ follows from letting $s=0$. To see that $R(c)$ is non-increasing, let $c < c^\prime$, and $s^*$ be the optimal solution under cost $c$. Then, $R(c) = r(s^*) + (1- s^*) c \le r(s^*) + (1- s^*) c^\prime \le R(c^\prime)$ where the first inequality follows because $s^* \le 1$, and the second because no solution is better than the optimal. To see that $R(c) - c$ is non-increasing, let $c^\prime < c$, and $s^*$ be the optimal solution under cost $c$. Then, by a similar argument we get that $R(c^\prime) - c^\prime \ge r(s^*) - s^* c^\prime \ge r(s^*) - s^* c = R(c) - c$. Convexity follows in a similar way (this is a standard result).

Third, observe that the objective function of \eqref{R(c)} is jointly continuous in $s$ and $c$. Thus, by the Maximum Theorem $R(c)$ is continuous in $c$, and $s^*(c)$ is upper-hemicontinuous.

Finally, because $r(s) + (1-s) c$ has decreasing differences in $(s,c)$ and the feasible set is a lattice, by Topkis's Theorem $s^*(c)$ is non-increasing in $c$. The result for $p^*(c)$ follows from the fact that $\bar{F}^{-1}(s)$ is non-increasing in $s$.\Halmos
\end{proof}

\vspace{1em}

\begin{proof}{Proof of Theorem~\ref{thm:DAPcontrol}.}
The optimality conditions of $v$ for problem \eqref{DUAL2} imply that the directional derivative of $\psi(v)$ along any direction is greater or equal to zero. In particular for each advertiser $a \in \mathcal{A}$ it should the case that $\nabla_{\mathbf{1}_a} \psi(v) \ge 0$, and $\nabla_{-\mathbf{1}_a} \psi(v) \ge 0$. Applying proposition \ref{prop:gradient} to both directions, together with the fact that there is zero probability of a tie occurring, we get that
\begin{align*}
	\mathbb{E} \left[ \left(1 - s^* \left(Q_a - v_a \right) \right) \mathbf{1} \{ Q_a - v_a > Q_{a^\prime} - v_{a^\prime} \: \forall a^\prime \in \mathcal{A}_0  \} \right] = \rho_a,
\end{align*}
and the result follows.\Halmos
\end{proof}

\vspace{1em}

\begin{proof}{Proof of Proposition \ref{homocontrols}.}
Let $\vec{s} = \{s_n(\cdot)\}_{n=1,\ldots,N}$ and $\vec{\imath}=\{i_n(\cdot)\}_{n=1,\ldots,N}$ be any feasible vectors of controls. Let $\bar{s}$ be the mean of the controls (in terms of prices $\bar{p}$ would be the generalized $\bar{F}$-mean), which is defined point-wise $\bar{s}\Q = \frac 1 N \sum_{n=1}^N s_n\Q$. Similarly, let $\bar{\imath}$ be such that $\bar{\imath}_a\Q = \frac 1 N \sum_{n=1}^N i_{n,a}\Q$ point-wise for all $a \in \mathcal{A}$. We will show that solution in which $(\bar{s},\bar{\imath})$ are used for all impressions is a feasible control with greater or equal revenue than the original one.

First, for the feasibility of $(\bar{s},\bar{\imath})$ observe that for each advertiser $a \in \mathcal{A}$
\begin{align*}
	C_a &= 
	\mathbb{E}_{Q}\left[ \sum_{n=1}^N  i_{n,a}\Q \right] = N \mathbb{E}_{Q}\left[ \bar{\imath}_a\Q \right],
\end{align*}
where the first equation follows from the feasibility of $(\vec{p},\vec{\imath})$ and the linearity of expectation, and the second from substituting $\bar{\imath}_a$ pointwise for $Q$. Clearly, from the convexity of $\mathcal{P}$ it follows that $(\bar{s},\bar{\imath}) \in \mathcal{P}$.

Second, we denote by $J^D(\vec{s},\vec{\imath})$ and $J^D(\bar{s},\bar{\imath})$ the objective value of the solutions $(\vec{s},\vec{\imath})$ and $(\vec{s},\vec{\imath})$ respectively. We have that
\begin{align*}
 	J^D(\vec{s},\vec{\imath}) & = \mathbb{E} \left[ \sum_{n=1}^N r \left( s_n\Q \right) + \sum_{a \in \mathcal{A}} Q_a \sum_{n=1}^N i_{n,a}\Q \right] 
    \le \mathbb{E} \left[  N r \left( \frac 1 N \sum_{n=1}^N s_n\Q \right) + \sum_{a \in \mathcal{A}} Q_a \sum_{n=1}^N i_{n,a}\Q \right] \\
	& = N \mathbb{E} \left[ r \left( \bar{s}\Q \right) + \sum_{a \in \mathcal{A}} Q_a \bar{\imath}_a\Q \right] = J^D(\bar{s},\bar{\imath}),
\end{align*}
where the inequality follows from the concavity of the revenue function, and the second equality from substituting $\bar{s}$ and $\bar{\imath}$ pointwise for all $Q$.\Halmos
\end{proof}

\vspace{1em}


Given a subset of the quality space $D \subseteq \Omega$, we define the measure $\mathbb{P}_R (D)$ as the probability that the quality vector belongs to that subset and the impression is rejected by the Ad Exchange when the optimal survival probability is used. More formally,
\begin{align*}
	\mathbb{P}_R (D) &= \mathbb{E} \left[ \left(1 - s^* \left(\max_{a \in \mathcal{A}_0} \{Q_a - v_a\} \right) \right) \mathbf{1} \{ Q \in D \} \right].
\end{align*}
Notice that the latter is not a probability measure since $\mathbb{P}_R (\Omega) \le 1$. Proposition \ref{prop:gradient} characterizes the directional derivative of the objective function of the dual along some directions that, as we will show later, are of particular interest. Results are given in terms of the measure $\mathbb{P}_R$.

\begin{prop}\label{prop:gradient} Given a subset $\alpha \in \mathcal{A}$, the directional derivative of the objective function of the dual w.r.t. directions $\mathbf{1}_\alpha$ and $-\mathbf{1}_\alpha$ are respectively
\begin{align*}
	\nabla_{\mathbf{1}_\alpha} \psi(v) &= -\mathbb{P}_R \left\{ \max_{a \in \alpha} \{Q_a - v_a\} > \max_{a \in \mathcal{A}_0 \setminus \alpha} \{Q_a - v_a \}  \right\} + \sum_{a \in \alpha} \rho_a, \\
	\nabla_{-\mathbf{1}_\alpha} \psi(v) &= \mathbb{P}_R \left\{ \max_{a \in \alpha} \{Q_a - v_a\} \ge \max_{a \in \mathcal{A}_0 \setminus \alpha} \{Q_a - v_a \}  \right\} - \sum_{a \in \alpha} \rho_a.
\end{align*}
\end{prop}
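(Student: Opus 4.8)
The plan is to differentiate the dual objective $\psi(v) = \mathbb{E}\,R\big(\lambda_v(Q)\big) + \sum_{a \in \mathcal{A}} v_a \rho_a$ directly along the rays $\pm\mathbf{1}_\alpha$, where I abbreviate $\lambda_v(Q) = \max_{a \in \mathcal{A}_0}\{Q_a - v_a\}$ (recall $Q_0 = v_0 = 0$). The linear term $\sum_a v_a\rho_a$ contributes exactly $\pm\sum_{a\in\alpha}\rho_a$ to $\nabla_{\pm\mathbf{1}_\alpha}\psi(v)$, so the whole task reduces to evaluating $\nabla_{\pm\mathbf{1}_\alpha}\mathbb{E}\,R(\lambda_v(Q))$. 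I would split this into (a) justifying that the directional derivative may be pulled inside the expectation, and (b) computing the pointwise directional derivative of $v\mapsto R(\lambda_v(Q))$.

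For (a), note that $v\mapsto\lambda_v(Q)$ is convex (a maximum of affine functions) and $R$ is convex and non-decreasing by Proposition~\ref{thm:revenue-regularity}, so $v\mapsto R(\lambda_v(Q))$ is convex; hence for each fixed $Q$ the difference quotients $t^{-1}\big[R(\lambda_{v+td}(Q))-R(\lambda_v(Q))\big]$ are monotone as $t\downarrow 0$. Moreover Proposition~\ref{thm:revenue-regularity} (``$R$ non-decreasing'' together with ``$R(c)-c$ non-increasing'') forces $0\le R'\le 1$, so $R$ is $1$-Lipschitz, while $\lambda_\cdot(Q)$ is $1$-Lipschitz in $\|\cdot\|_\infty$; thus the quotients are bounded by $\|d\|_\infty$ uniformly in $Q$ and $t$. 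Dominated convergence then gives $\nabla_d\mathbb{E}\,R(\lambda_v(Q)) = \mathbb{E}\big[\nabla_d R(\lambda_v(Q))\big]$.

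For (b), fix $Q$ and split off $\alpha$: set $M_\alpha = \max_{a\in\alpha}\{Q_a-v_a\}$ and $M_{\bar\alpha}=\max_{a\in\mathcal{A}_0\setminus\alpha}\{Q_a-v_a\}$, so $\lambda_v(Q)=\max(M_\alpha,M_{\bar\alpha})$. Then $\lambda_{v+t\mathbf{1}_\alpha}(Q)=\max(M_\alpha-t,M_{\bar\alpha})$, whose right derivative at $t=0$ is $-1$ when $M_\alpha>M_{\bar\alpha}$ and $0$ otherwise; symmetrically $\lambda_{v-t\mathbf{1}_\alpha}(Q)=\max(M_\alpha+t,M_{\bar\alpha})$ has right derivative $+1$ when $M_\alpha\ge M_{\bar\alpha}$ and $0$ otherwise. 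Composing with $R$ and using Danskin's theorem on $R(c)=\max_{s\in[0,1]}\{r(s)+(1-s)c\}$ — whose one-sided derivatives at $c$ are $1$ minus a maximizer of \eqref{R(c)}, the least maximizer $s^*(c)$ being the one selected by the right derivative — I obtain $\nabla_{\mathbf{1}_\alpha}R(\lambda_v(Q))=-(1-s^*(\lambda_v(Q)))\,\mathbf{1}\{M_\alpha>M_{\bar\alpha}\}$ and $\nabla_{-\mathbf{1}_\alpha}R(\lambda_v(Q))=(1-s^*(\lambda_v(Q)))\,\mathbf{1}\{M_\alpha\ge M_{\bar\alpha}\}$. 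Taking expectations, recognizing the definition of $\mathbb{P}_R$ stated just before the proposition, and restoring $\pm\sum_{a\in\alpha}\rho_a$ yields the two displayed identities.

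The delicate point — and what I expect to be the main obstacle — is reconciling two independent ``ties.'' First, the tie $M_\alpha=M_{\bar\alpha}$ is asymmetric: moving $v$ up along $\mathbf{1}_\alpha$ leaves $\lambda_v(Q)$ momentarily constant (the active maximizer slips into $\mathcal{A}_0\setminus\alpha$), while moving down along $-\mathbf{1}_\alpha$ immediately raises it, which is exactly why the first formula carries the strict event $\{M_\alpha>M_{\bar\alpha}\}$ and the second the weak event $\{M_\alpha\ge M_{\bar\alpha}\}$; tracking these inequalities correctly is the crux. Second, $R$ has a kink wherever \eqref{R(c)} admits several maximizers, so there its left derivative is $1$ minus the \emph{greatest} maximizer rather than $s^*$; since $\lambda_v(Q)$ decreases in the $+\mathbf{1}_\alpha$ direction it is the left derivative that appears, and the two selections coincide off the countable kink set of $R$, so I would fix at the outset the convention that $s^*(\cdot)$ denotes the maximizer attached to the relevant one-sided derivative, after which both displays hold verbatim. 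Measurability of $Q\mapsto(1-s^*(\lambda_v(Q)))\,\mathbf{1}\{\cdots\}$ is immediate, since $s^*$ is monotone and $\lambda_v, M_\alpha, M_{\bar\alpha}$ are continuous in $Q$.
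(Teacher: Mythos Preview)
Your proof is correct and follows essentially the same route as the paper's: reduce to the pointwise directional derivative of $v\mapsto R(\lambda_v(Q))$ via a chain rule (inner derivative of the max, outer derivative $R'(c)=1-s^*(c)$), then interchange with expectation. The only substantive difference is in how the interchange is justified: the paper bounds $R(c)\le M+|c|$ and invokes Theorem~7.46 of \citet{Shapiro2009} for convex random functions, whereas you argue directly via dominated convergence using the $1$-Lipschitz bound on $R$ (from $0\le R'\le 1$) composed with the $\|\cdot\|_\infty$-Lipschitz bound on $\lambda_v$. Your route is slightly more self-contained; the paper's is shorter by outsourcing to a reference. You are also more careful than the paper about the kink in $R$ (left vs.\ right derivative when \eqref{R(c)} has multiple maximizers), an issue the paper glosses over by simply writing $R'(c)=1-s^*(c)$.
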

\begin{proof}{Proof of Proposition \ref{prop:gradient}.}

We consider first the direction $\mathbf{1}_\alpha$. Notice that the random function $R\left(\max_{a \in \mathcal{A}_0} \{Q_a - v_a \}\right)$ is convex, and thus directionally differentiable. We first show that $\psi(v)$ is finite. From Assumption \ref{regular} we have that the revenue function is bounded by $r(s) \le M$, and thus $R(c) \le M + \max(c,0) \le M + |c|$. Therefore, using the triangle inequality we obtain that
$$
    \phi(v) \le M + \mathbb E |\max_{a \in \mathcal{A}_0} \{Q_a - v_a \}| \le M + \sum_{a \in \mathcal{A}} \mathbb E |Q_a| + |v_a| < \infty
$$
We can now apply Theorem 7.46 in \citet{Shapiro2009} and obtain that $\psi(v)$ is directionally differentiable at $v$ and that one can exchange expectation and directional derivative. Putting all together we get that
\begin{align*}
\nabla_{\mathbf{1}_\alpha} \psi(v) &= \mathbb{E} \left[ \nabla_{\mathbf{1}_\alpha} R\left(\max_{a \in \mathcal{A}_0} \{Q_a - v_a \}\right) \right] + \sum_{a \in \alpha} \rho_a \\
&= \mathbb{E} \left[R^\prime \left(\max_{a \in \mathcal{A}_0} \{Q_a - v_a \}\right) \nabla_{\mathbf{1}_\alpha} \left\{ \max_{a \in \mathcal{A}_0} \{Q_a - v_a \} \right\} \right] + \sum_{a \in \alpha} \rho_a,
\end{align*}
where the second equation follows from the chain rule. We conclude by the fact that $R^\prime(c) = 1 - s^*(c)$ and $\nabla_{\mathbf{1}_\alpha} \left\{ \max_{a \in \mathcal{A}_0} \{Q_a - v_a \} \right\} = - \mathbf 1 \left\{ \max_{a \in \alpha} \{Q_a - v_a\} > \max_{a \in \mathcal{A}_0 \setminus \alpha} \{Q_a - v_a \}  \right\}$. A similar result follows for the opposite direction $-\mathbf{1}_\alpha$ from the fact that $\nabla_{-\mathbf{1}_\alpha} \left\{ \max_{a \in \mathcal{A}_0} \{Q_a - v_a \} \right\} = \mathbf 1 \left\{ \max_{a \in \alpha} \{Q_a - v_a\} \ge \max_{a \in \mathcal{A}_0 \setminus \alpha} \{Q_a - v_a \}  \right\}$.\Halmos

\end{proof}

\vspace{1em}

\begin{proof}{Proof of Proposition \ref{prop:flow-feasible}.}
The proof proceeds by contradiction, that is, we assume that there is no feasible flow. First, we cast the feasible flow problem as a maximum flow problem. Feasibility would imply the existence of a flow with value $1 - \mathbb{P}(\emptyset\text{-tie})$. But since we assume that no such feasible flow exists, by the max-flow min-cut theorem there should exists a cut with value strictly less than $1 - \mathbb{P}(\emptyset\text{-tie})$. The contradiction arises because the optimality conditions of $v$ for the dual problem \eqref{DUAL2} imply that the every cut is lower bounded by $1 - \mathbb{P}(\emptyset\text{-tie})$.

In order to write the feasible flow problem as a maximum flow problem, we first add a source $s$ and a sink $t$. Second, we add one arc from $s$ to each node associated to a non-empty subset $S \subseteq \mathcal{A}_0$ (left-hand side nodes) with capacity $\mathbb{P}(S\text{-tie})$. Third, we add one arc from each advertiser $a \in \mathcal{A}_0$ (right-hand side nodes) to $t$ with capacity $\rho_a$. Lastly, we set the capacity of arcs from $S$ to $a \in S$ to infinity.

Now, since no feasible flow exists, by the max-flow min-cut theorem there should be a cut with value strictly less than $1 - \mathbb{P}(\emptyset\text{-tie})$. Let $\alpha \subseteq \mathcal{A}_0$ be the advertiser nodes (right-hand) belonging to the $t$ side of a \emph{minimum} cut. Figure \ref{fig:min-cut} shows the minimum cut. Next we argue that subset nodes in the $s$ side verify that $S \cap \alpha = \emptyset$, while those in the $t$ side verify that $S \cap \alpha \neq \emptyset$. First, because the cut has minimum value, there is no arc from a subset node to an advertiser node crossing the cut (those arcs have infinity capacity). Equivalently, within the $s$ side of the cut, all subsets nodes $S \subseteq \mathcal{A}_0$ should verify that $S \cap \alpha = \emptyset$. Second, observe that any subset node with $S \cap \alpha = \emptyset$ in the $t$ side of the cut could be moved to the $s$ side of the cut without increasing the value of the cut. Hence, with no loss of generality we can assume that all subset nodes in the $t$ side of the cut verify that $S \cap \alpha \neq \emptyset$.

As a consequence, the only arcs crossing the cut are those from the source to the subsets $S \cap \alpha \neq \emptyset$, and those from advertisers $\mathcal{A}_0 \setminus \alpha$ to the sink. The value of this cut is
\begin{align*}
	\sum_{S \subseteq \mathcal{A}_0 : S \cap \alpha \neq \emptyset} \mathbb{P}(S\text{-tie}) + \sum_{a \in \mathcal{A}_0 \setminus \alpha} \rho_a.
\end{align*}
Because the value is strictly less than $1 - \mathbb{P}(\emptyset\text{-tie})$ we get that
\begin{align} \label{min-cut-value}
	\sum_{S \subseteq \mathcal{A}_0 : S \cap \alpha \neq \emptyset} \mathbb{P}(S\text{-tie}) < \sum_{a \in \alpha} \rho_a,
\end{align}
where we used that $\sum_{a \in \mathcal{A}} \rho_a + \rho_0^\text{eff} = 1 -  \mathbb{P}(\emptyset\text{-tie})$.

\begin{figure}[t]
\centering
\includegraphics{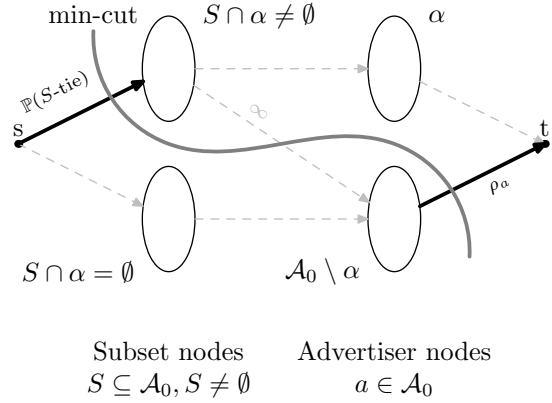}
\caption{The flow problem in the bipartite graph with a minimum cut. A source $s$ connected to the subset nodes and a sink $t$ connected to the advertisers nodes was included. $\alpha \subseteq \mathcal{A}_0$ is the subset of advertiser nodes (right-hand) belonging to the $t$ side. Note that no there is no arc from a subset node to an advertiser node crossing the cut.}\label{fig:min-cut}
\end{figure}

Next, we look at the optimality conditions of $v$ for the dual problem \eqref{DUAL2}. We distinguish between the case that $0 \notin \alpha$ and $0 \in \alpha$. First suppose that $0 \notin \alpha$, and consider the direction $-\mathbf{1}_\alpha$ that has a $-1$ if $a\in \alpha$ and $0$ elsewhere. According to proposition \ref{prop:gradient} the directional derivative of the objective at $v$ is
\begin{align*}
	\nabla_{-\mathbf{1}_\alpha} \psi(v) &= \mathbb{P}_R \left\{ \max_{a \in \alpha} \{Q_a - v_a\} \ge \max_{a \in \mathcal{A}_0 \setminus \alpha} \{Q_a - v_a \}  \right\} - \sum_{a \in \alpha} \rho_a \\
	& = \sum_{S \subseteq \mathcal{A}_0 : S \cap \alpha \neq \emptyset} \mathbb{P}(S\text{-tie})  - \sum_{a \in \alpha} \rho_a,
\end{align*}
where we have written the event that the maximum is verified non-exclusively by some advertiser $a \in \alpha$ as all $S$-ties in which some advertiser $a \in \alpha$ is involved. The optimality of $v$ implies that the directional derivative along that direction is greater or equal to zero, contradicting equation \eqref{min-cut-value}.

When $0 \in \alpha$ we consider the direction $\mathbf{1}_{\mathcal{A} \setminus \alpha}$ that has a $1$ if $a \notin \alpha$ and $0$ elsewhere. The direction derivative is now
\begin{align*}
	\nabla_{\mathbf{1}_{\mathcal{A} \setminus \alpha}} \psi(v) &= -\mathbb{P}_R \left\{ \max_{a \in \mathcal{A} \setminus \alpha} \{Q_a - v_a\} > \max_{a \in \alpha \cup \{0\}} \{Q_a - v_a \}  \right\} + \sum_{a \in \mathcal{A} \setminus \alpha} \rho_a, \\
	& = - \sum_{S \subseteq \mathcal{A}_0 : S \subseteq \mathcal{A} \setminus \alpha} \mathbb{P}(S\text{-tie}) + \sum_{a \in \mathcal{A} \setminus \alpha} \rho_a
	= \sum_{S \subseteq \mathcal{A}_0 : S \cap \alpha \neq \emptyset} \mathbb{P}(S\text{-tie})  - \sum_{a \in \alpha} \rho_a,
\end{align*}
where in the second equation we have written the event that the maximum is verified exclusively by some advertiser $a \in \alpha$ as all $S$-ties in which only advertisers in $\alpha$ are involved. Again, the optimality of $v$ implies that the directional derivative along that direction is greater or equal to zero, contradicting equation \eqref{min-cut-value}.\Halmos
\end{proof}

\vspace{1em}

\begin{proof}{Proof of Proposition \ref{thm:upperbound}.}
Let $\mu^*$ be the optimal policy for the stochastic control problem. Let $\hat{s} = \{\hat{s}_n(\cdot)\}_{n=1,\ldots,N}$ and $\hat{\imath} = \{\hat{\imath}_n(\cdot)\}_{n=1,\ldots,N}$ be deterministic vectors of controls defined as
\begin{align*}
	\hat{s}_n(Q) &=  \mathbb{E}_{\mathcal{F}_n} \left[s_n^{\mu^*}(Q) \mid Q \right]	\qquad \forall Q \: \text{pointwise},\\
	\hat{\imath}_{n,a}(Q) &=  \mathbb{E}_{\mathcal{F}_n} \left[(1-s_n^{\mu^*}(Q)) {I}_{n,a}^{\mu^*}(Q) \mid Q \right]	 \qquad \forall Q \: \text{pointwise}, a \in \mathcal{A},
\end{align*}
where the expectation is taken over the history of the system until $n$, which is denoted by $\mathcal{F}_n$, and conditional on a particular realization of $Q$. The resulting controls are independent of the history, and dependent only on the realization of $Q$ and the impression number $n$. Thus, they fulfills the first approximation and they are valid deterministic vectors of controls. We will show that $(\hat{s},\hat{\imath})$ is feasible for the DAP, and that its objective value (in the DAP) dominates the optimal objective value of the SCP. Then, we may conclude that $J_N^* \le J_N^D(\hat{s},\hat{\imath}) \le J_N^D$, because no feasible solution is better than the optimal.

First, for the contract fulfillment constraint we have that for each advertiser $a \in \mathcal{A}$
\begin{align*}
	C_a &= \mathbb{E} \left[\sum_{n=1}^N (1-X_n(s_n^{\mu^*}(Q_n))) I_{n,a}^{\mu^*}(Q_n) \right]\\ 
	& = \sum_{n=1}^N \mathbb{E} \left[ \mathbb{E}_{\mathcal{F}_n} \left[(1-s_n^{\mu^*}(Q_n)) {I}_{n,a}^{\mu^*}(Q_n) \mid Q_n \right] \right] = \sum_{n=1}^N \mathbb{E} \left[ \hat{\imath}_{n,a}(Q) \right],
\end{align*}
where the first equality follows from taking expectations to the almost sure contract fulfillment constraint of $\mu^*$, the second from the tower rule, and the third from substituting $\hat{s}$ and $\hat{\imath}$ pointwise for all $Q$ and the fact that impressions are i.i.d. Non-negativity of the controls follows trivially. Additionally, is it not hard to show that $\sum_{a \in \mathcal{A}} \hat{\imath}_{n,a}(\cdot) + s_n(\cdot) \le 1$ for all $n$. Thus, $(\hat{s},\hat{\imath})$ is a feasible deterministic control.

Second, the objective value of the optimal stochastic control is bounded by
\begin{align*}
	J_N^* 
	& = \mathbb{E} \left[ \sum_{n=1}^N \mathbb{E}_{\mathcal{F}_n} \left[ r \left(s_n^{\mu^*}(Q_n)\right) \mid Q_n \right] +
	 \sum_{a \in \mathcal{A}} Q_{n,a} \mathbb{E}_{\mathcal{F}_n} \left[(1-s_n^{\mu^*}(Q_n)) I_{n,a}^{\mu^*}(Q_n)  \mid Q_n \right] \right] \\
	& \le \mathbb{E} \left[ \sum_{n=1}^N r \left( \hat{s}_n(Q_n) \right) +
	 \sum_{a \in \mathcal{A}} Q_{n,a} \hat{\imath}_{n,a}(Q) \right] = J_N^D(\hat{s},\hat{\imath}),
\end{align*}
where the first equality follows from the tower rule and because $Q_n$ is measurable w.r.t. the conditional expectation, and the inequality from applying Jensen's inequality to the concave revenue function.\Halmos
\end{proof}

\vspace{1em}

\begin{proof}{Proof of Proposition~\ref{thm:DAPcontrol-noadx}.}
Because $B=0$, then it is not hard to show that $\bar{F}^{-1}(s) = 0$, and that the revenue function is $r(s)=0$. Hence, the revenue function is regular and satisfies Assumption \ref{regular}. Moreover, the optimal survival probability is $s^*(c) = 0$, and $R(c) = c$. The result follows from substituting these functions in Theorem \ref{thm:DAPcontrol}.\Halmos
\end{proof}

\end{APPENDICES}

\bibliographystyle{ormsv080}
\bibliography{onlinematching,../../../../common/balseiro}

\ECSwitch

\ECDisclaimer



\section{Comparison to the Primal-Dual Method}
\label{sec:withoutAdX}

Consider the allocation problem faced by a publisher in display advertising in which arriving impressions need to be assigned to advertisers, and there is no option of sending to an exchange.  This problem is a particular case of our model where the winning bid random variable is identically zero, i.e. $B=0$. The following proposition shows that the optimal controls admit simple analytical expressions.

\begin{prop}\label{thm:DAPcontrol-noadx} Suppose that ties have zero probability. Then, in the case without AdX the optimal controls are $I_a(Q)=\mathbf{1} \left\{Q_a - v_a \ge Q_{a^\prime} - v_{a^\prime} \: \forall a^\prime \in \mathcal{A}_0  \right\}$ where $v = \{v_a\}_{a \in \mathcal{A}_0}$ satisfies $v_0 = 0$ and
\begin{align*}
	\mathbb{P} \left\{Q_a - v_a \ge Q_{a^\prime} - v_{a^\prime} \: \forall a^\prime \in \mathcal{A}_0  \right\} = \rho_a \qquad \forall a \in \mathcal{A}.
\end{align*}
\end{prop}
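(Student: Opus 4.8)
The plan is to obtain this as a direct specialization of Theorem~\ref{thm:DAPcontrol} to the degenerate exchange $B\equiv 0$. First I would pin down the exchange primitives in this case: since $\bar F(p)=\mathbb{P}\{B\ge p\}$ equals $1$ at $p=0$ and $0$ for every $p>0$, the generalized inverse satisfies $\bar F^{-1}(s)=0$ for all $s\in(0,1]$, so the revenue function $r(s)=s\,\bar F^{-1}(s)$ is identically zero on $[0,1]$. A constant-zero function is continuous, concave, non-negative, bounded, and vanishes as $s\to 0$, so Assumption~\ref{regular} holds and the entire machinery of Section~\ref{sec:DAP} (in particular strong duality and Theorem~\ref{thm:DAPcontrol}) applies unchanged.

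Next I would evaluate the value function and its least maximizer. From \eqref{R(c)}, $R(c)=\max_{s\in[0,1]} r(s)+(1-s)c=\max_{s\in[0,1]}(1-s)c$, which for $c\ge0$ is attained at $s=0$; hence $R(c)=c$ and $s^*(c)=0$. This is the expected behavior: with no exchange the pricing function never holds an impression back.

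Then I would substitute $s^*\equiv 0$ into the conclusions of Theorem~\ref{thm:DAPcontrol}. The survival-probability control collapses to $s(Q)=s^*(\max_{a\in\mathcal A_0}\{Q_a-v_a\})=0$, and the assignment control becomes $I_a(Q)=\mathbf 1\{Q_a-v_a> Q_{a'}-v_{a'}\ \forall a'\in\mathcal A_0\}$; since ties have zero probability this indicator agrees almost surely with the one using $\ge$, which is the form stated in the proposition. Likewise the optimality equations $\mathbb{E}\!\left[(1-s^*(Q_a-v_a))I_a(Q)\right]=\rho_a$ reduce to $\mathbb{E}[I_a(Q)]=\rho_a$, i.e. $\mathbb{P}\{Q_a-v_a\ge Q_{a'}-v_{a'}\ \forall a'\in\mathcal A_0\}=\rho_a$ for every $a\in\mathcal A$. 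Finally, recalling that $Q_{n,0}\equiv 0$ and that the outside option $0$ carries no capacity constraint and hence no Lagrange multiplier, extending the notation by setting $v_0:=0$ is consistent and yields exactly the displayed system.

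The only genuinely delicate point is the boundary behavior at the degenerate distribution: one must check that $r\equiv 0$ really satisfies every clause of Assumption~\ref{regular}, so that Theorem~\ref{thm:DAPcontrol}---which presupposes regularity and uses strong duality---is legitimately invoked, and that the AdX variational step in the derivation of the dual remains valid when it is trivial. Both are immediate, so beyond this sanity check the proof is essentially a one-line substitution into Theorem~\ref{thm:DAPcontrol}.
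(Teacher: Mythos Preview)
Your proposal is correct and follows essentially the same route as the paper's own proof: specialize to $B\equiv 0$, observe that $\bar F^{-1}(s)=0$ and hence $r\equiv 0$ is regular, deduce $R(c)=c$ and $s^*(c)=0$, and then substitute into Theorem~\ref{thm:DAPcontrol}. You are more explicit than the paper about checking the regularity clauses, reconciling the strict versus non-strict inequality in the indicator via the zero-tie assumption, and the $v_0=0$ convention, but the underlying argument is identical.
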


The resulting decision rule $\arg\max_a\{Q_a - v_a\}$ is identical to
the rule studied in previous work (e.g.,~\cite{DevenurHayes2009}),
where $v_a$ is an optimal dual variable resulting from solving an
assignment problem on a sample of the data, where the distribution is
unknown.  Roughly speaking, in~\cite{DevenurHayes2009} (and similarly in other
work~\cite{Feldman2010,Vee2010,AWY09}), it
is shown that as long as the sample is of size $\approx~\epsilon n$,
the overall assignment will be $\approx \epsilon$ close to the optimal
offline solution.

In our model, the parameters of the quality distribution are known, so
we do not need to use a sample.  Of course in practice, the parameters
need to be learned, and so we would need to use a sample of the data
in order to learn them; but in many settings (including online
advertising) it is reasonable to assume that we at least know the {\em form} of
the distribution (e.g., normal, exponential, Zipf), albeit not the
specific parameters (mean, variance, covariance, etc.).  The
techniques in~\cite{DevenurHayes2009} are powerful {\em because} they
don't need to assume anything about the distribution, but it is
important to ask what can be gained from knowing the {\em form} of the
distribution, which is what we do in the remainder of this section,
both analytically and experimentally.

More formally, suppose the distribution of quality is not known with
certainty, but we have at our disposal a sample of $M$ quality vectors
$\{q_m\}_{m=1}^\M$ that may be used to pin-down the distribution.
Additionally, it is known that the qualities are drawn independently
from a population with continuous density $g(x|\theta)$, where
$\theta$ is an unknown parameter to be estimated. Let $G(x|\theta)$ be
the c.d.f.\, which we assume to be strictly monotonic.  For
simplicity, we deal with the case of one advertiser with capacity to impression ratio of $\rho$.  From Proposition~\ref{thm:DAPcontrol-noadx} the optimal DAP control is the
$(1-\rho)$-quantile of $Q$, that is, $v = \bar{G}^{-1}(\rho |
\theta)$.  We compare the asymptotic efficiency of a parametric and a
non-parametric estimation of the model.

\paragraphtitle{Parametric estimation method.} Let $\hat{\theta}_\M^\MLE$ be the maximum likelihood estimator (MLE) of the unknown parameter $\theta$. That is, $\hat{\theta}_\M^\MLE$ is an optimal solution of the program $\max_\theta \sum_{m=1}^\M \log g(q_m | \theta)$. Once we have our estimator, we plug-in the estimated distribution in the dual problem, and solve for the optimal dual variable $\hat{v}_\M^\MLE$. Again, from Proposition~\ref{thm:DAPcontrol-noadx} we have that the optimal dual variable, given our maximum likelihood estimation, is given by $\hat{v}_\M^\MLE = \bar{G}^{-1} (\rho | \hat{\theta}_\M^\MLE)$.

In turn, by the invariance property of the MLE, it is the case that $\hat{v}_\M^\MLE$ is the maximum likelihood estimator of the true optimally dual variable $v$ (see, e.g., \cite{CasellaBerger2001}). As a consequence, under some regularity conditions, we have that our new estimator is consistent, asymptotically efficient, and asymptotically normal
$$
    \sqrt{M} (\hat{v}_\M^\MLE - v) \Rightarrow \mathcal{N}(0,u(\theta)),
$$
where the $u(\theta)$ is the Cram\'er-Rao lower bound on the variance of any unbiased estimator. The Cram\'er-Rao lower bound is $u(\theta) = \left( \frac {\partial \bar{G}^{-1}} {\partial \theta} (\rho | \theta) \right)^2 I(\theta)^{-1}$, where $I(\theta) = \mathbb{E} \left[ \left(\frac{\partial}{\partial\theta} \ln g(Q|\theta)\right)^2 \right]$ is the Fisher information of parameter $\theta$.

\paragraphtitle{Non-parametric estimation method.} Considering the Sample Average Approximation (SAA) of the dual problem \eqref{DUAL2} we can obtain a non-parametric estimator of the truly optimal dual variable (see Chapter 5 from~\cite{Shapiro2009} for a review of the topic). In a SAA the expected value of the stochastic program is approximated by the sample average function over the observations $\{q_m\}_{m=1}^\M$. In our case, we have that
$$
    \hat{v}_\M = \arg\min_v \frac 1 M \sum_{m=1}^M \max\{q_m - v, 0\} + \rho v.
$$
Equivalently, the previous problem can be stated as a linear program, and in this case one obtains the training-based Primal-Dual method as described in~\cite{DevenurHayes2009}. It can be shown that, under some conditions, the non-parametric estimator $\hat{v}_\M$ is consistent and asymptotically normal~\citep{Shapiro2009}. As we shall see, this estimator is not necessarily efficient. It is not hard to prove that the sample $(1-\rho)$-quantile is an optimal solution to the SAA problem. Hence, from the asymptotic distribution of the $(1-\rho)$-quantile we have that
$$
    \sqrt {m} (\hat{v}_\M - v) \Rightarrow \mathcal{N}(0,u^\prime(\theta)),
$$
where the variance is $u^\prime(\theta) = \frac {\rho (1 - \rho)} {g(v|\theta)^2}$.

\paragraphtitle{Analysis.} Both the parametric and the non-parametric estimators converge, as the number of samples increases, to the true optimal solution. However, the non-parametric estimator is not as efficient as the parametric counterpart. Indeed, this is expected since the maximum likelihood estimator is known to be asymptotically efficient. We measure the \emph{relative efficiency} as the ratio of asymptotic variance of the non-parametric estimator to parametric one, i.e.\, $\varepsilon(\theta) = \frac {u^\prime(\theta)} {u(\theta)}$.

Until now, our analysis has been in terms of the optimal dual solution. The rationale is that the closer the dual variable is to the true value $v$, the better the performance of the policy should be. Next, we quantify analytically how does a deviation from the optimal solution impacts the performance of the policy. To assess the performance of the policy we look at the fluid limit as described in Section \ref{sec:fluid-limit}. The next proposition shows that the relative efficiency in terms of the performance is exactly equal to $\varepsilon(\theta)$. Hence, there is no loss in looking at the relative efficiency of the estimators instead.

\begin{prop} \label{thm:parametric}
The relative efficiency of the non-parametric estimator is
\begin{align}\label{releff}
    \varepsilon(\theta) = \rho (1 - \rho) I(\theta) \left( \frac {\partial \bar{G}} {\partial \theta} (v | \theta) \right)^{-2} \ge 1,
\end{align}
which is exactly equal to the relative efficiency in terms of the policies' performance.
\end{prop}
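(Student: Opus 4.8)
The plan is to establish the displayed identity in \eqref{releff} by a chain‑rule computation, deduce $\varepsilon(\theta)\ge 1$ from a Cauchy--Schwarz (equivalently, Cram\'er--Rao) argument, and then transfer the comparison from the dual variables to the fluid performance by a second‑order expansion. For the identity, write $v(\theta)=\bar{G}^{-1}(\rho\,|\,\theta)$ and differentiate the defining relation $\bar{G}(v(\theta)\,|\,\theta)\equiv\rho$ with respect to $\theta$; since $\partial_x\bar{G}(x\,|\,\theta)=-g(x\,|\,\theta)$ this gives $\partial_\theta\bar{G}^{-1}(\rho\,|\,\theta)=v'(\theta)=g(v\,|\,\theta)^{-1}\,\partial_\theta\bar{G}(v\,|\,\theta)$. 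Substituting this into $u(\theta)=\bigl(\partial_\theta\bar{G}^{-1}(\rho\,|\,\theta)\bigr)^{2} I(\theta)^{-1}$ and dividing $u'(\theta)=\rho(1-\rho)/g(v\,|\,\theta)^{2}$ by it, the factors $g(v\,|\,\theta)^{-2}$ cancel and \eqref{releff} drops out. This step uses only the standing regularity (differentiability of $\bar{G}$ in $\theta$, $g(v\,|\,\theta)>0$, differentiation under the integral sign, and the MLE invariance and asymptotics already invoked in the text).

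For the bound $\varepsilon(\theta)\ge 1$, the point is that $\partial_\theta\bar{G}(v\,|\,\theta)$ is a score covariance. Let $\dot{\ell}(x;\theta)=\partial_\theta\log g(x\,|\,\theta)$ be the score, so $\mathbb{E}[\dot{\ell}(Q;\theta)]=0$ and $\mathrm{Var}(\dot{\ell}(Q;\theta))=I(\theta)$. Differentiating $\bar{G}(v\,|\,\theta)=\int_v^\infty g(x\,|\,\theta)\,dx$ under the integral sign,
\[
  \frac{\partial\bar{G}}{\partial\theta}(v\,|\,\theta)=\mathbb{E}\bigl[\mathbf{1}\{Q>v\}\,\dot{\ell}(Q;\theta)\bigr]=\mathrm{Cov}\bigl(\mathbf{1}\{Q>v\},\dot{\ell}(Q;\theta)\bigr),
\]
the second equality because the score has mean zero. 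Since $\bar{G}(v\,|\,\theta)=\rho$ forces $\mathrm{Var}(\mathbf{1}\{Q>v\})=\rho(1-\rho)$, Cauchy--Schwarz gives $\bigl(\partial_\theta\bar{G}(v\,|\,\theta)\bigr)^{2}\le\rho(1-\rho)I(\theta)$, i.e.\ $\varepsilon(\theta)\ge 1$ by \eqref{releff}. (Equivalently, $\hat v_M$ is a regular—consistent and asymptotically normal—estimator of the smooth functional $v$, so its asymptotic variance $u'(\theta)$ cannot undercut the Cram\'er--Rao bound $u(\theta)$, which reads $u'(\theta)\ge u(\theta)$.)

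For the performance claim I would use the fluid evaluation of Section~\ref{sec:fluid-limit}: the per‑impression fluid‑limit objective of the bid‑price policy is a function $\Phi(\cdot)$ of the single dual variable, optimized at $v$, twice continuously differentiable near $v$, with $\Phi'(v)=0$ and $\Phi''(v)=\kappa(\theta)>0$ depending only on the true model; in the one‑advertiser, no‑AdX case this is the deterministic‑approximation objective $\Phi(w)=\mathbb{E}[(Q-w)^+]+\rho w$, for which $\Phi'(w)=\rho-\bar{G}(w\,|\,\theta)$ and $\kappa(\theta)=\Phi''(v)=g(v\,|\,\theta)$. For an estimator $\hat v_M$ with $\sqrt{M}(\hat v_M-v)\Rightarrow\mathcal{N}(0,\sigma^{2})$, a second‑order Taylor expansion gives $M\bigl(\Phi(\hat v_M)-\Phi(v)\bigr)=\frac{1}{2}\Phi''(v)\bigl(\sqrt{M}(\hat v_M-v)\bigr)^{2}+o_P(1)$, and, combining boundedness of $\Phi$ with the standard tail estimates for the MLE and for sample quantiles, this quantity is uniformly integrable, so $M\,\mathbb{E}[\Phi(\hat v_M)-\Phi(v)]\to\frac{1}{2}\kappa(\theta)\sigma^{2}$. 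Evaluating this for the parametric estimator ($\sigma^{2}=u(\theta)$) and the non‑parametric one ($\sigma^{2}=u'(\theta)$) and dividing, the curvature $\kappa(\theta)$ cancels and the relative efficiency measured in fluid performance equals $u'(\theta)/u(\theta)=\varepsilon(\theta)$.

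The routine parts are the chain rule in the first step and the observation in the second that $\partial_\theta\bar{G}(v\,|\,\theta)$ is a covariance against the score. I expect the main obstacle to be the last step: extracting from Section~\ref{sec:fluid-limit} that the fluid loss around $v$ is genuinely \emph{quadratic} rather than merely Lipschitz—this is exactly what makes the loss scale with the asymptotic variance $\sigma^{2}/M$, and hence the ratio equal $\varepsilon(\theta)$ rather than $\sqrt{\varepsilon(\theta)}$—together with justifying the interchange of limit and expectation, which requires uniform integrability of $M\,|\Phi(\hat v_M)-\Phi(v)|$.
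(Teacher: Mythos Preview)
Your first two steps are fine and match the paper: the chain-rule/implicit-function identity yielding \eqref{releff} is exactly what the paper does, and your Cauchy--Schwarz argument for $\varepsilon(\theta)\ge 1$ is just an explicit unpacking of the Cram\'er--Rao bound the paper invokes.

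The gap is in the third step, and it is precisely the obstacle you flagged. The function you call $\Phi(w)=\mathbb{E}[(Q-w)^+]+\rho w$ is the \emph{dual objective} $\psi(w)$, not the fluid-limit performance of the bid-price policy. These coincide at $w=v$ but differ elsewhere: the dual value is an upper bound on the yield, not the yield itself. The actual fluid performance $\bar J(w)$ must account for the policy hitting capacity early (when $w<v$, so $\bar G(w)>\rho$) or exhausting the discard budget and force-assigning leftover impressions (when $w>v$). Working this out from Section~\ref{sec:fluid-limit}, the paper obtains
\[
\bar J(w)=\begin{cases}\rho\,\mathbb{E}_\theta[Q\mid Q\ge w],& w<v,\\ \mathbb{E}_\theta[Q]-(1-\rho)\,\mathbb{E}_\theta[Q\mid Q\le w],& w\ge v,\end{cases}
\]
which is continuous but has a genuine \emph{kink} at $v$: the one-sided derivatives $\bar J_+'(v)$ and $\bar J_-'(v)$ are finite and of opposite sign, and there is no second-order smoothness to exploit. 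So your quadratic expansion $M(\Phi(\hat v_M)-\Phi(v))\to\tfrac12\kappa\sigma^2$ does not describe the performance loss; the loss $\bar J(v)-\bar J(\hat v_M)$ is first-order in $\hat v_M-v$ and scales like $1/\sqrt{M}$, not $1/M$.

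The paper handles this with the directional delta method of Shapiro (1991): from $\sqrt{M}(\hat v_M-v)\Rightarrow\mathcal{N}(0,\sigma^2)$ one gets $\sqrt{M}\bigl(\bar J(\hat v_M)-\bar J(v)\bigr)\Rightarrow d\bar J\bigl(v;\mathcal{N}(0,\sigma^2)\bigr)$, where $d\bar J(v;\xi)$ equals $\bar J_+'(v)\xi$ for $\xi\ge 0$ and $\bar J_-'(v)\xi$ for $\xi<0$. A direct computation then shows the asymptotic variance of this limit is $\sigma^2\cdot K$ with $K=\tfrac12(\bar J_+'(v)^2+\bar J_-'(v)^2)-\tfrac{1}{2\pi}(\bar J_+'(v)-\bar J_-'(v))^2$ depending only on the true model. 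Since $K$ is the same for both estimators, the ratio of asymptotic performance variances is again $u'(\theta)/u(\theta)=\varepsilon(\theta)$. So the conclusion you are aiming for survives, but the mechanism is a first-order directional delta method applied to a kinked $\bar J$, not a second-order expansion of a smooth $\Phi$.
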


\begin{proof}{Proof of Proposition~\ref{thm:parametric}.}
For \eqref{releff}, we use that $\frac {\partial \bar{G}^{-1}} {\partial \theta} (\rho | \theta) =  \frac {\partial \bar{G}} {\partial \theta} (v | \theta) / g(v|\theta)$, which follows from the implicit function theorem. In view of Cram\'er-Rao lower bound, we have that $\varepsilon(\theta) \ge 1$.

Next, we look at the average yield of the policy as the number of impressions grows to infinity when a bid price of $u$ is employed, denoted by $\bar{J}(u)$. The limiting performance is given by
\begin{align*}
    \bar{J}(u) = \begin{cases}
        \rho \mathbb E_\theta [ Q | Q \ge u], & \text{if } u < v,\\
        \mathbb E_\theta [ Q ] - (1-\rho) \mathbb E_\theta [ Q | Q \le u], & \text{if } u \ge v.\\
    \end{cases}
\end{align*}
Under our assumptions, the performance function is continuous $u$. One would be tempted to apply the Delta Method to derive the asymptotic distribution of the performance. Unfortunately, $\bar{J}(\cdot)$ is not differentiable at $v$. However, it is the case that the performance function is semi-differentiable at $v$ with finite right-derivative $\bar{J}_+^\prime(v)\ge0$ and left-derivative $\bar{J}_-^\prime(v)\le0$. Thus, we can apply an extension of the Delta Method for directionally differentiable functions proved by \cite{Shapiro1991}, and obtain
\begin{align*}
    \sqrt {m} (\bar{J}(\hat{v}_\M) - \bar{J}(v)) \Rightarrow d\bar{J}\Big(v; \mathcal{N}(0,u^\prime(\theta))\Big), \\
    \sqrt {m} (\bar{J}(\hat{v}_\M^\MLE) - \bar{J}(v)) \Rightarrow d\bar{J}\Big(v; \mathcal{N}(0,u(\theta))\Big),
\end{align*}
where $d\bar{J}(v; \xi)$ is the G\^ateaux derivative of $\bar{J}$ at the point $v$ along the direction $\xi$, which is given by $d\bar{J}(v; \xi) = \bar{J}_+^\prime(v) \xi$ when $\xi \ge 0$, and $d\bar{J}(v; \xi) = \bar{J}_-^\prime(v) \xi$ when $\xi < 0$. Note that the asymptotic variance of the performances are $u^\prime(\theta) \cdot K$, and $u(\theta) \cdot K$ respectively, where the performance scale factor is given by $K = \frac 1 2 (\bar{J}_+^\prime(v)^2 + \bar{J}_-^\prime(v)^2) - \frac 1 {2 \pi} (\bar{J}_+^\prime(v) - \bar{J}_-^\prime(v))^2$. Thus, the relative efficiency of the performance is identical to $\varepsilon(\theta)$.\Halmos
\end{proof}

\paragraphtitle{Examples.} To fix ideas we consider two simple examples. First, suppose that $Q \sim \exp(\theta)$. The maximum likelihood estimator is given by $\hat{\theta}_\M^\MLE = \left( \frac 1 M \sum_{m=1}^M q_m \right)^{-1}$, and the Fisher information is $I(\theta) = \theta^{-2}$. The optimal dual variable is $v = - \theta^{-1} \ln \rho$. Hence, the relative efficiency is $\varepsilon(\theta) = \lfrac {1 - \rho} {\rho \ln^2\rho}$. In this case, the relative efficiency is lower bounded by $\varepsilon(\theta) \ge 1.544$. The lower bound is tight, and attained at $\rho \approx 0.2032$. The relative efficiency as a function of the capacity to impression ratio is plotted in Figure~\ref{fig:rel-eff}. As shown in the figure, the relative efficiency may be arbitrarily bad as the capacity to impression ratio gets close to zero or one.

For the next example we assume that qualities are normal with known variance $\sigma^2$ and unknown mean, that is, $Q \sim \mathcal{N}(\theta,\sigma^2)$. The maximum likelihood estimator is the sample mean, $\hat{\theta}_\M^\MLE = \frac 1 M \sum_{m=M}^m q_m$, and the Fisher Information is $I(\theta) = \sigma^{-2}$. In this case the relative efficiency is given by $\varepsilon(\theta) = 2 \pi \rho (1 - \rho) \exp \left( \Phi^{-1}(1-\rho)^2 \right)$, with $\Phi^{-1}$ being the inverse of the standard normal c.d.f.  Here the relative efficiency is lower bounded by $\varepsilon(\theta) \ge \pi/2$, with the minimum attained at $\rho = 1/2$. Interestingly, the relative efficiency is invariant under monotonic transformations of any random variable. Hence, the previous result holds too for the log-normal distribution.

\begin{figure}[t]
\centering
\includegraphics[width=8cm]{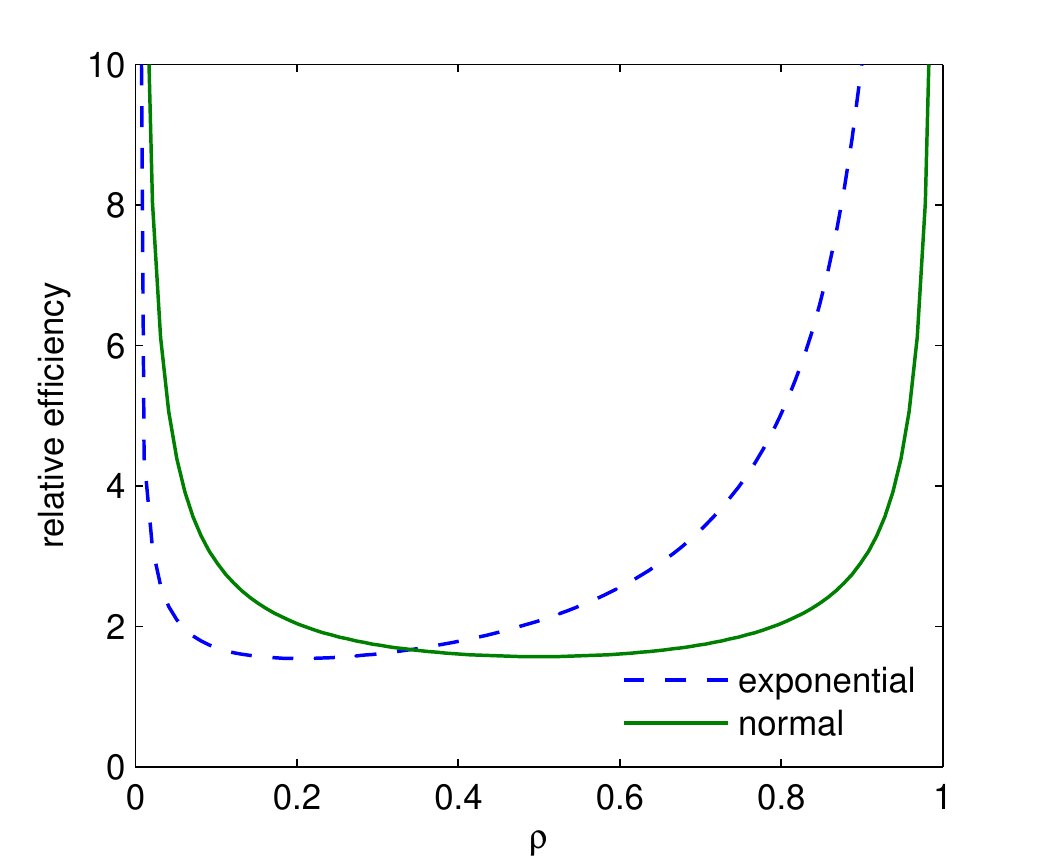}
\caption{Relative efficiency as a function of the capacity to impression ratio $\rho$ for the exponential distribution, and the normal distribution with known variance.}\label{fig:rel-eff}
\end{figure}

\paragraphtitle{Experiments.}
The previous analysis was in terms of a single advertiser;  in Section~\ref{sec:PD-compare} we show experimentally that the advantage of parametric estimation extends to multiple advertisers as well.

\section{Incorrect Assignments in the User Type Model}\label{sec:user-types}

In Section~\ref{sec:data-model} we introduced a user-type model with good-will penalties to accommodate the fact that advertisers have specific targeting criteria. If the contracts are feasible, that is, there is enough inventory to satisfy the targeting criteria; one would expect our policy to assign only impressions within the criteria. In this section we formalize the concept of a feasible operation, and give sufficient conditions under which the stochastic control policy does not assign any impressions outside of the targeting criteria.

It is straightforward to state the problem of determining whether contracts can be satisfied or not, as a feasible flow problem on a bipartite graph. The problem can be formulated on an graph with one node for each user type $T$ with a supply of $\pi(T)$, on the left side; and one node for each advertisers $a\in\mathcal{A}_0$ with a demand $\rho_a$, on the right side. Then, we say that the operation is \emph{feasible} if the user type-advertiser graph admits a feasible flow.

The feasibility of the operation, albeit necessary, does not suffice to guarantee that no impressions outside the targeting criteria are assigned to the advertisers. When advertisers compete for the same type, and one of them obtains a potentially unbounded reward for that type; it may be optimal to allow the latter advertiser to cannibalize the user type, and force the others advertisers to take types outside of their criteria. This may occur, surprisingly, for all conceivable penalties. However, if qualities are bounded, and penalties are set high enough, then the optimal policy would not recommend the assignment of impressions outside the targeting criteria. Even in this case some impressions may be incorrectly assigned in the left-over regime, but the probability of this event decays exponentially fast. We formalize this discussion in the following proposition.

\begin{prop}\label{prop:no-incorrect-assign} Suppose that the user type-advertiser graph admits a feasible flow, and that qualities and bids from AdX are bounded by $ \frac 1 A \min_a \tau_a$. Then, the stochastic control policy does not assign any impressions outside of the targeting criteria, except perhaps for the left-over regime.
\end{prop}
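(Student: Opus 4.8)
The plan is to reduce the statement to a bound on the optimal dual variables $v=\{v_a\}_{a\in\mathcal A}$ of the deterministic problem \eqref{DUAL2}, and then to exploit feasibility. Write $\kappa:=\tfrac1A\min_{a\in\mathcal A}\tau_a$ for the common bound on placement qualities and AdX bids. By Policy~\ref{policy:bp}, before the left-over regime (i.e.\ before the stopping time $N^*$ of the proof of Theorem~\ref{thm:main}, so that $\mathcal A_n=\mathcal A$) the policy $\mu^B$ routes a rejected impression only to an advertiser in $\arg\max_{a\in\mathcal A_0}\{Q_{n,a}-v_a\}$, and the tie-breaking rule of \S\ref{sec:ties} routes only to advertisers attaining that maximum; so it suffices to show that on an impression of type $T$ no advertiser $a\notin T$ attains $\max_{a'\in\mathcal A_0}\{Q_{a'}-v_{a'}\}$. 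For such an impression $Q_a=-\tau_a$ while the discard option has value $Q_0-v_0=0$, so it is enough to prove that $v_a>-\tau_a$ for every $a\in\mathcal A$: then $Q_a-v_a=-\tau_a-v_a<0=Q_0-v_0$ and $a$ cannot be maximal. I will also use that, since AdX bids are bounded by $\kappa$, the null price satisfies $p_\infty\le\kappa$, hence $s^*(c)=0$ for every $c>\kappa$; consequently every impression whose maximal contract-adjusted quality exceeds $\kappa$ is rejected by AdX with probability one and is therefore handed to some advertiser.

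To prove $v_a>-\tau_a$, suppose for contradiction that $v_{\hat a}\le-\tau_{\hat a}$ for some $\hat a$. On an impression whose type contains $\hat a$, since $Q_{\hat a}\ge0$ we get $Q_{\hat a}-v_{\hat a}\ge-v_{\hat a}\ge\tau_{\hat a}\ge A\kappa>\kappa$, so all such impressions are handed to an advertiser. Sort the dual values $v_{(1)}\le\cdots\le v_{(A)}$, so $v_{(1)}\le v_{\hat a}\le-A\kappa$. If there is a ``gap'' index $k\in\{1,\dots,A-1\}$ with $v_{(k)}\le-\kappa$ and $v_{(k+1)}\ge v_{(k)}+\kappa$, let $U$ be the set of advertisers with dual value $\le v_{(k)}$, so $\emptyset\neq U\subsetneq\mathcal A$ and $\hat a\in U$. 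On any impression whose type $T$ meets $U$, pick $a\in T\cap U$: since $a\in T$ and the quality distributions are continuous, $Q_a>0$ almost surely, so $Q_a-v_a>-v_a\ge-v_{(k)}\ge\kappa$; moreover every $b\in\mathcal A_0\setminus U$ has either $b=0$ (value $0<-v_a\le Q_a-v_a$) or $v_b\ge v_{(k+1)}\ge v_a+\kappa$, so $Q_b-v_b\le\kappa-v_b\le-v_a<Q_a-v_a$. Thus $a$ strictly dominates every element of $\mathcal A_0\setminus U$, the maximizer lies in $U$, and since $Q_a-v_a>\kappa$ the impression is rejected by AdX and goes to an advertiser in $U$. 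Hence $U$ receives \emph{every} impression whose type meets $U$, so $\sum_{a\in U}\rho_a\ge\sum_{T:\,T\cap U\neq\emptyset}\pi(T)$, contradicting the strict Hall inequality supplied by feasibility of the type--advertiser graph. If instead no such gap exists, then starting from $v_{(1)}\le-A\kappa\le-\kappa$ and repeatedly applying $v_{(k+1)}<v_{(k)}+\kappa$ (valid as long as $v_{(k)}\le-\kappa$, which one checks inductively holds for all $k\le A$) we obtain $v_{(A)}<v_{(1)}+(A-1)\kappa\le-\kappa$, so $v_a\le-\kappa$ for \emph{every} advertiser; then by the same domination argument every impression of nonempty type is handed to an advertiser, so $\sum_{a\in\mathcal A}\rho_a\ge\sum_{T\neq\emptyset}\pi(T)$, again contradicting strict Hall (take $W=\mathcal A$). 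In either case we reach a contradiction, so $v_a>-\tau_a$ for all $a$, and the proposition follows.

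The step I expect to be the main obstacle is this last contradiction argument: one must control \emph{all} of the dual variables simultaneously, not just $v_{\hat a}$, because a different advertiser may itself carry a very negative bid-price and so remain a strong competitor on $\hat a$'s eligible types --- this is precisely what forces the clustering into the set $U$ and the gap/no-gap dichotomy. The argument also relies on feasibility in its strict form (equivalently, on selecting the optimal dual appropriately when \eqref{DUAL2} has several optima, which does not occur for the continuous distributions of \S\ref{sec:data-model}) and on continuity of the placement qualities (to obtain the strict domination and to rule out an atom at zero); without these one can only conclude that incorrect assignments are confined to a set of measure zero.
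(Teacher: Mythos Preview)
Your approach is genuinely different from the paper's. The paper argues at the \emph{primal} level: it takes an optimal DAP allocation that contains an incorrect assignment, builds a second feasible allocation from a flow in the type--advertiser graph, and pushes flow around an augmenting cycle through the offending arc. The quality bound $\kappa=\tfrac{1}{A}\min_a\tau_a$ guarantees that the at most $A$ ``good'' arcs being decreased on the cycle lose at most $A\kappa\le\tau_a$, which is recouped by removing the incorrect arc of value $-\tau_a$; so the objective does not drop, and iterating removes all incorrect assignments. You instead try to argue at the \emph{dual} level, reducing the statement to the bound $v_a>-\tau_a$ on the bid-prices.

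There is, however, a real gap in your argument. Your contradiction in both the gap and the no-gap case ends at
\[
\sum_{a\in U}\rho_a \;\ge\; \sum_{T:\,T\cap U\neq\emptyset}\pi(T),
\]
which together with Hall's condition (the necessary condition for feasibility of the type--advertiser flow) yields only \emph{equality}, not a contradiction. You appeal to ``strict Hall,'' but the proposition assumes only that a feasible flow exists; it does not assume slack. Continuity of the quality distributions does not rescue this: the type masses $\pi(T)$ are discrete, and Hall can bind. For a concrete instance, take two advertisers and two types $T_1=\{1\}$, $T_2=\{2\}$ with $\pi(T_1)=\pi(T_2)=\rho_1=\rho_2=\tfrac12$ and any continuous positive qualities bounded by $\kappa$. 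Then every $v_1\in[-\tau_1,0]$ (with $v_2=0$) is optimal for \eqref{DUAL2}, so the boundary value $v_1=-\tau_1$ is an optimal dual and your hypothesis $v_{\hat a}\le-\tau_{\hat a}$ is realised without contradiction. In that example the policy still makes no incorrect assignments (advertiser $2$ strictly dominates on $T_2$), but your reduction to the \emph{strict} inequality $v_a>-\tau_a$ fails. More generally, your scheme would have to (i) relax the target to $v_a\ge-\tau_a$, (ii) handle the resulting ties with the discard option via the tie-breaking rule of \S\ref{sec:ties}, and (iii) still deal with sets $U$ on which Hall is tight. The paper's augmenting-cycle argument avoids all three issues by never touching the dual.
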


\begin{proof}{Proof Sketch of Proposition \ref{prop:no-incorrect-assign}.}
For simplicity we consider the case without AdX. Let $i$ be an optimal solution to the DAP, and suppose that some advertisers are assigned some types outside of their targeting criteria. We will construct another solution with greater or equal yield in which no incorrect assignments are made.

An optimal solution to the DAP is a vector of functions $i_{T,a} : \Omega_T \rightarrow [0,1]$ for $a \in \mathcal{A}_0$ and $T \in \mathcal{T}$ such that
\begin{align*}
    \sum_{T \in \mathcal{T}} \mathbb E i_{T,a} (Q) = \rho_a, \quad \forall a \in \mathcal{A}_0,\\
    \sum_{a \in \mathcal{A}_0} i_{T,a} (Q) = \pi(T), \quad \text{(a.s.)} \; \forall T \in \mathcal{T} ,
\end{align*}
We refer to each of the functions in a solution as components.

Next, we construct a feasible solution $i^0$ to the DAP from a feasible flow of the user type-advertiser graph. Take the difference $\Delta i = i^0 - i$, which is a circulation in the user type-advertiser graph. The circulation $\Delta$ may have components of mixed signs. 
Because $i^0$ has no incorrect assignments, if advertiser $a$ is assigned a type $T \not\ni a$ not in her criteria, then the circulation verifies that $\Delta i_{T,a}(Q) = - i_{T,a}(Q)$. Hence, the components with incorrect assignments are negative.

Let $a$ be an advertiser that is assigned a type $T \not\ni a$ not in her criteria, that is, $\mathbb E [i_{T,a}(Q)] > 0$. We may find an augmenting cycle $w$ containing the incorrect assignment, such that if we push some flow along this cycle, we construct another solution $i+w$ with fewer incorrect assignments. The cycle $w$ has at most $A+1$ positive components, and at most $A+1$ negative components. The cost associated to the negative components is at most $A \frac 1 A \min_{a^\prime} \tau_{a^\prime} - \tau_a \le 0$. All positive components arcs have a cost of at least zero, and the total cost of this cycle is positive. Thus, the new solution $i + w$ has greater or equal yield. Moreover, $\mathbb E [(i+w)_{T,a}(Q)] < \mathbb E [i_{T,a}(Q)]$, and no new incorrect assignments are introduced. Repeating this procedure, we may construct a solution with no incorrect assignments.\Halmos
\end{proof}

\begin{figure}[t]
\centering
    \includegraphics{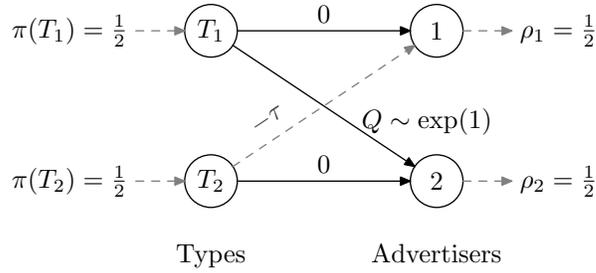}
\caption{Example with two user types, and two advertisers.}\label{fig:unbounded-example}
\end{figure}

Note that since the left-over regime is vanishingly small in proportion to the length of the horizon
(Cor.~\ref{thm:left-over}) this implies that the number of unassigned
impressions is small.  Thus in practice, a publisher may set $C_a' =
C_a + \epsilon$, discard any impressions assigned by the policy
outside the targeting criteria, and ensure that contracts are filled
properly.

Next, we prove by example that the requirement that qualities are bounded is necessary for the previous result to hold. Consider a publisher who contracts with two advertisers, and agrees to deliver one half of the arriving impressions to each one of them. Additionally, there are two impression types, denoted by $T_1$ and $T_2$, each occurring 50\% of the time. The first advertiser only cares about the first type. She obtains a reward of zero for $T_1$, and the advertisers pays a positive penalty $\tau$ each time a $T_2$ impression is assigned to her. The second advertisers admits both types, but only obtains a positive reward $Q\sim \exp(1)$ for the first type. The setup is shown in Figure~\ref{fig:unbounded-example}.

A feasible policy could assign all $T_1$ impressions to the first advertiser, and $T_2$ impressions to the second advertiser. However, such policy is not optimal. Notice that both advertisers compete for the $T_1$ impressions, and the first advertiser could extract a potentially high quality from them. It is not hard to see that the optimal dual variables are $v_1 = -\tau$, and $v_2 = 0$; and the optimal objective value is $\frac 1 2 \mathbb{E} [Q - \tau]^+ = \frac 1 2 e^{-\tau}$. Hence, it is optimal to assign those $T_1$ impressions with quality greater than $\tau$ to the second advertiser. Thus, no matter the value of the penalty, a fraction $e^{-\tau}$ of the total impression assigned to the first advertiser are undesired.

\section{Computation}\label{sec:computation}

In this section we describe show to compute the optimal policy for our data model. The main problem resides in the computation of the dual objective in \eqref{DUAL2} and its gradient given a vector of dual variables.

\paragraphtitle{Objective.} The first term of the objective can be written as
\begin{align*}
	\mathbb{E} R\left(\max_{a \in \mathcal{A}_0} \{Q_a - v_a\}\right) &=
    \sum_{\forall T} \pi(T) \mathbb{E} \left[ R\left(\max_{a \in \mathcal{A}_0} \{Q_a - v_a\} \right) \mid T \right] \\
    &= \sum_{\forall T} \pi(T) \sum_{a \in T \cup T^c } \mathbb{E} \left[ R\left(Q_a - v_a\right) \mathbf{1} \{ Q_a - v_a \ge Q_{a^\prime} - v_{a^\prime} \; \forall a^\prime \neq a \} \mid T \right] \\
    &= \sum_{\forall T} \pi(T) \left( I_{T,0}(v) + \sum_{a \in T} I_{T,a}(v) \right)
\end{align*}
where the first equation follows by conditioning on the type, and the second because the events are a partition of the sample space. Next, we show to compute the expectations $I_{T,a}(v)$.

Let $M_T(v) = \max_{a \in \mathcal{A}_0 \setminus T} \{ - \tau_a - v_a \}$ be the maximum contract adjusted quality of the advertisers (including the outside option) that are not in the type, and $\alpha_T(v)$ the set of advertisers that verify the maximum. Then, we have that
\begin{align*}
    I_{T,0}(v) &= R\left(M_T(v)\right) \mathbb{P} \{ Q_a - v_a \le M_T(v)  \; \forall a^\prime \in T \} \\
    & = R\left(M_T(v)\right) G_T (M_T(v) + v_T ),
\end{align*}
where $G_T(\cdot)$ is the c.d.f.\ of $Q_T$, and $v_T$ is the vector of dual variables for the advertisers in the type.

For $a\in T$, we compute the expectation by conditioning on the continuous random variable $Q_a$. Further, suppose that we partition the mean vector and covariance matrix in a corresponding manner. That is, $\mu_T = \left(\begin{smallmatrix} \mu_a \\ \mu_{-a} \end{smallmatrix}\right)$, and $\Sigma_T = \left(\begin{smallmatrix} \Sigma_{a,a} & \Sigma_{a,-a} \\ \Sigma_{-a,a} & \Sigma_{-a,-a}\end{smallmatrix}\right)$. For instance, $\mu_{-a}$ gives the means for the variables in $T \setminus \{a\}$, and $\Sigma_{-a,-a}$ gives variances and covariances for the same variables. The matrix $\Sigma_{-a,a}$ gives covariances between variables in $T \setminus \{a\}$ and set $a$ (as does matrix $\Sigma_{a,-a}$). Because the marginal distribution of a multivariate normal is an univariate normal, we have that $Q_a \sim \ln \mathcal{N}(\mu_{a}, \Sigma_{a,a})$. We denote by $g_{T,a}(\cdot)$ the p.d.f.\ of $Q_a$. Similarly, let $Q_{-a}$ be the vector of qualities for advertisers in $T \setminus \{a\}$. Conditioning on $Q_a = q_a$, the distribution of $Q_{-a}$ is log-normal with mean vector $\mu_{-a} - \Sigma_{-a,a} \lfrac {q_a - \mu_a} {\Sigma_{a,a}}$, and covariance matrix $\Sigma_{-a,-a} - \lfrac {\Sigma_{-a,a} \Sigma_{a,-a}} {\Sigma_{a,a}}$. We denote its c.d.f.\ by $G_{T,-a}(\cdot)$. Putting all together, we have that
\begin{align*}
    I_{T,a}(v) &= \mathbb{E} \left[ R\left(Q_a - v_a\right) \mathbb{P} \{ Q_{a^\prime} - v_{a^\prime} \le Q_a - v_a \; \forall a^\prime \neq a \mid Q_a \} \mid T \right] \\
    &= \int_{v_a + M_T(v)}^\infty R(q_a - v_a) G_{T,-a}(q_a - v_a + v_{-a}) g_{T,a}(q_a) \text{ d} q_a,
\end{align*}
where $v_{-a}$ is the vector of dual variables for advertisers in $T \setminus \{a\}$.

\paragraphtitle{Gradient.} The forward derivative of the dual objective can be written as
\begin{align*}
	\nabla_{a} \psi(v) &= -\mathbb{P}_R \left\{ Q_a - v_a > \max_{a \in \mathcal{A}_0 \setminus a} \{Q_{a^\prime} - v_a^\prime \}  \right\} + \rho_a\\
    & = - \sum_{\forall T} \pi(T) \mathbb E \left[ \left(1 - s^*(Q_a - v_a)\right) \mathbf 1 \left\{ Q_a - v_a > \max_{a \in \mathcal{A}_0 \setminus a} \{Q_{a^\prime} - v_a^\prime \}  \right\} \mid T \right] + \rho_a\\
    &= - \sum_{T: a \in T} \pi(T) P_{T,a}(v) - \sum_{\substack{T: a \not\in T \\ a \in \alpha_T(v), |\alpha_T(v)|=1}} \pi(T) P_{T,a}(v) + \rho_a,
\end{align*}
where the contributing types for the forward derivative are those where $a$ is in, and those where $a$ is not in but verifies exclusively the maximum of the types not in ($M_T(v)$). If two or more advertisers verify the maximum $M_T(v)$, then increasing $v_a$ does not have an impact of the type's contribution to the objective. When $a\not\in T$, the expectation is given by
\begin{align*}
    P_{T,a}(v) = \big(1 - s^*(M_T(v))\big) G_T (M_T(v) + v_T).
\end{align*}
Similarly to the objective, when $a\in T$ we have that
\begin{align*}
    P_{T,a}(v) &= \int_{v_a + M_T(v)}^\infty  \big(1 - s^*(q_a - v_a)\big) G_{T,-a}(q_a - v_a + v_{-a}) g_{T,a}(q_a) \text{ d} q_a.
\end{align*}

The backward derivative is computed in a similar fashion. The only exception is that, when $a \not\in T$, and $a$ verifies the maximum $M_T(v)$,the advertiser always contributes to the derivative regardless of the number of advertisers that attain the maximum. Hence,
\begin{align*}
	\nabla_{-a} \psi(v) &= \sum_{T: a \in T} \pi(T) P_{T,a}(v) + \sum_{\substack{T: a \not\in T \\ a \in \alpha_T(v)}} \pi(T) P_{T,a}(v) - \rho_a.
\end{align*}

\paragraphtitle{Optimization.} We solve the dual problem \eqref{DUAL2} using a Gradient Descent Method. At each step the objective and its objective are computed as described previously. Notice that, when multiple advertisers verify a tie, the objective is not differentiable. In this case a descent direction is constructed using the forward and backward derivatives (if possible).

\paragraphtitle{Ties.} For the following, we assume that the instance is not \emph{degenerate}, that is, the variances within the types are positive, and no two advertisers are perfectly correlated. Then, within each type, non-trivial ties can only occur between the advertisers that are not in the type (we refer to the non-trivial ties as those in which multiple advertisers attain the same contract adjusted quality). Moreover, there can be at most one tie within each type, and this happens when the maximum $M_T(v)$ is verified by many advertisers, that is $|\alpha_T(v)| > 1$. With some abuse of notation, the probability of such a tie is given by $\pi(T) P_{T,\alpha_T(v)}$ and it should be split among the advertisers $\alpha_T(v)$. Note that the number of non-trivial ties is $O(T)$, and the tie-breaking rule can be computed efficiently by solving a feasible flow problem.

\section{Fluid Limit}\label{sec:fluid-limit}

Exploiting our generative model we can construct a fluid model, and obtain the limiting performance of an arbitrary bid-price policy as the number of impressions grows to infinity. We first describe the fluid equations governing the dynamics of the fluid model, then we construct a solution to such system, and then prove that the stochastic algorithm satisfies the fluid equation in the limit.
For simplicity, we focus our analysis on the case with no AdX, and no ties; though, a similar analysis applies to the more general case.

In the following we analyze the performance of the stochastic control policy when implementing some (sub-optimal) bid-prices $v$. Let $\omega$ be a sample path, 
$J_n(\omega)$ be the cumulative yield collected up to impression $n$, and $S_{n,a}(\omega)$ the number of impressions assigned to advertiser $a$ up to time $n$. We extended the previous definitions for an arbitrary time, by taking their linear interpolations, so that they are continuous. The previous functions are random elements on $\mathbb{C}[0,\infty)$. We shall construct the fluid limit by scaling capacity and time proportionally to infinity, and considering a continuous flow of impressions arriving during an horizon of length 1. More formally, we define $\bar{S}_a(t) = \lim_{N \rightarrow \infty} N^{-1} S_{tN,a}(\omega)$, which can be interpreted as the fraction of impressions assigned to advertiser $a$ by time $t$. Similarly, we define $\bar{J}(t) = \lim_{N \rightarrow \infty} N^{-1} J_{tN}(\omega)$ as the cumulative yield up to time $t$. We are interested in computing $\bar{J}(1)$, the total limiting yield of the algorithm under bid-prices $v$.

When capacity is scaled, each advertiser has a capacity of $\rho_a$, and the fluid model should satisfy the following differential equations
\begin{subequations}\label{fluid}
\begin{align}
    {J}^\prime(t) &= \displaystyle \sum_{a \in \mathcal{A}(t)} \mathbb{E} \Big[ Q_a \mathbf{1} \big\{ a = \mathop{\arg\max}_{a^\prime \in \mathcal{A}(t)} \{Q_{a^\prime} - v_{a^\prime}\}  \big\} \Big], \label{fluid3}\\
    {S}_a^\prime(t) &= \displaystyle \mathbb{P} \Big\{ a = \mathop{\arg\max}_{a^\prime \in \mathcal{A}(t)} \{Q_{a^\prime} - v_{a^\prime}\} \Big\}, \quad \forall a \in \mathcal{A}_0 \label{fluid2}\\
    \mathcal{A}(t) &= \left\{ a\in\mathcal{A}_0 : \bar{S}_a(t) < \rho_a \right\}, \label{fluid1}
\end{align}
\end{subequations}
with the initial conditions ${S}_a(0) = 0$, and ${J}(0) = 0$. In \eqref{fluid1}, $\mathcal{A}(t)$ is the set of advertisers that are yet to be fulfilled (including advertiser 0, which is fulfilled when the time comes all impressions should be assigned directly to the advertisers), and \eqref{fluid2} determines the rate at which impressions are assigned to each advertiser. When one advertiser is fulfilled and the fraction of impressions $\bar{S}_a^\prime(t)$ reaches its capacity $\rho_a$, it is excluded from $\mathcal{A}(t)$, and its rate is driven to zero. Finally, \eqref{fluid3} determines the rate at which yield is generated.

It is not hard to see that the solution to the fluid equations \eqref{fluid} is piecewise linear, and continuous. We construct a solution as follows. Let an \emph{epoch}, denoted by $t^k$, be the time in which the contract of any advertiser is fulfilled (including advertiser 0). The horizon $[0,1]$ is partitioned in consecutive \emph{pieces}, each culminating with an epoch. The $k^{\rm th}$ piece spans the interval $[t^k,t^{k+1})$, and has a length of $\Delta^k = t^{k+1}-t^k$. Since one advertiser is fulfilled at each epoch, there are at most $A+1$ pieces.

Let $\mathcal{A}^k$ be set of advertisers yet to be satisfied at the beginning of stage $k$ as given by \eqref{fluid1}, $r_a^k$ be the service rate for advertiser $a$ during stage $k$ as given by \eqref{fluid2}, and $y^k$ be the yield rate during stage $k$ as given by \eqref{fluid3}. The length of a stage is determined by the advertiser that is first fulfilled. Once determined, the solution is constructed recursively as follows
\begin{subequations}
\begin{align}
    \Delta^k &= \min_{a \in \mathcal{A}^k} \left\{ \frac {\rho_a - S_a(t_k)} {r_a^k} \right\}, \label{fluid-alg1}\\
    t^{k+1} &= \Delta^k + t^k, \nonumber\\
    S_a(t^{k+1}) &= S_a(t^k) + r_a^k \Delta^k, \nonumber\\
    J(t^{k+1}) &= J(t^k) + y^k \Delta^k, \nonumber\\
    \mathcal{A}^{k+1} &= \mathcal{A}^k \setminus {a^k}, \nonumber
\end{align}
\end{subequations}
where $a^k$ is the advertiser that verifies the minimum in \eqref{fluid-alg1}, and initially $t^0 = 0$, and $\mathcal{A}^{0} = \mathcal{A}_0$. The functions $S_a$ and $J$ are obtained as the linear interpolation of the values at the endpoints of the interval. Fortunately, the rates can be easily obtained by evaluating the dual objective and its gradient. Let $\psi^k(v)$ be the objective in \eqref{DUAL2} when restricting to the set of advertisers $\mathcal{A}^k$. Then, we have that $r^k = \rho - \nabla \psi^k(v)$, and $y^k = \psi^k(v) - v \cdot \nabla\psi^k(v)$.

We conclude by showing that functions obtained are actually the fluid limit of the stochastic process induced by the algorithm.

\begin{prop} The fluid limits $\bar{S}_a(t)$, and $\bar{J}(t)$ are a solution to the fluid equations \eqref{fluid}.
\end{prop}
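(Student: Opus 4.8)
The plan is to show that the rescaled, linearly interpolated processes $\bar S^N_a(t)=N^{-1}S_{\lfloor tN\rfloor,a}$ and $\bar J^N(t)=N^{-1}J_{\lfloor tN\rfloor}$ converge, almost surely and uniformly on $[0,1]$, to the piecewise-linear functions produced by the recursion~\eqref{fluid-alg1}, and then to verify that those functions satisfy~\eqref{fluid}; this also settles the implicit point that the limits defining $\bar S_a$ and $\bar J$ exist. I would start with a Doob decomposition. Let $\mathcal F_n=\sigma(Q_1,\dots,Q_n)$ and let $\mathcal A_n\subseteq\mathcal A_0$ be the set of advertisers (including the discard option, while disposable capacity remains) to which impression $n$ may be routed under $\mu^B$; this set is $\mathcal F_{n-1}$-measurable. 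Writing $g_a(\mathcal B)=\mathbb P\{a=\arg\max_{a'\in\mathcal B}\{Q_{a'}-v_{a'}\}\}$, we have $\mathbb E[S_{n,a}-S_{n-1,a}\mid\mathcal F_{n-1}]=g_a(\mathcal A_n)$, so $M_{n,a}=S_{n,a}-\sum_{i=1}^n g_a(\mathcal A_i)$ is a martingale with increments in $[-1,1]$, whence $\mathbb E M_{N,a}^2\le N$ and, by Doob's $L^2$ inequality, $N^{-1}\sup_{n\le N}|M_{n,a}|\to0$ a.s. The same decomposition applies to $J_n=\sum_{i=1}^n Q_{i,a_i^*}\mathbf 1\{a_i^*\in\mathcal A\}$, with square-integrable martingale increments (since $Q$ has finite second moments) and compensator $\sum_{i\le n}y(\mathcal A_i)$, $y(\mathcal B)=\sum_{a\in\mathcal B\cap\mathcal A}\mathbb E[Q_a\mathbf 1\{a=\arg\max_{a'\in\mathcal B}\{Q_{a'}-v_{a'}\}\}]$. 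Tightness in $\mathbb C[0,1]$ is immediate because each compensator is Lipschitz (as $g_a\le1$ and $y$ is bounded) while the martingale parts vanish uniformly after rescaling.

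The crux is to resolve the feedback through $\mathcal A_n$. The decisive structural fact is that $n\mapsto\mathcal A_n$ is nonincreasing and changes at most $A+1$ times, at the random epochs when a contract (or the discard capacity) first saturates. On the $k$-th stretch, where $\mathcal A_n\equiv\mathcal A^k$ is frozen, the increments $S_{n,a}-S_{n-1,a}$ are i.i.d.\ Bernoulli with mean $r_a^k:=g_a(\mathcal A^k)$ because the $Q_n$ are i.i.d.; the strong law then forces the rescaled length of the stretch to converge to $\Delta^k=\min_{a\in\mathcal A^k}(\rho_a-\bar S_a(t^k))/r_a^k$, and the no-ties/nondegeneracy hypothesis of this section makes the minimiser $a^k$ a.s.\ unique, so exactly one advertiser leaves: $\mathcal A^{k+1}=\mathcal A^k\setminus\{a^k\}$. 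Inducting over the at most $A+1$ stretches from $\mathcal A^0=\mathcal A_0$, $t^0=0$ gives the claimed a.s.\ uniform convergence to the deterministic functions of~\eqref{fluid-alg1}, and since that construction is uniquely determined, every subsequential limit coincides with it. Verifying~\eqref{fluid} is then routine: on each piece $\mathcal A(t)\equiv\mathcal A^k$ by~\eqref{fluid1}, $\bar S_a^\prime(t)=r_a^k$ is~\eqref{fluid2} and $\bar J^\prime(t)=y^k$ is~\eqref{fluid3}, and the identities $r^k=\rho-\nabla\psi^k(v)$, $y^k=\psi^k(v)-v\cdot\nabla\psi^k(v)$ follow from Proposition~\ref{prop:gradient} in the no-AdX case ($s^*\equiv0$, so $\mathbb P_R=\mathbb P$).

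The main obstacle is precisely this feedback: the drift at time $t$ depends on which contracts are still open, which depends on the realised trajectory, so one cannot invoke a single functional law of large numbers. The remedy is the induction over the finitely many pieces together with the a.s.\ uniqueness of the successive epoch times and their minimisers. Two points deserve care: one must rule out ``chattering'' at an epoch — once $\bar S_a$ attains $\rho_a$, the rate $r_a$ drops to $0$ and stays there, which is exactly what~\eqref{fluid1} enforces — and one must check that the pre-limit fluctuations of the epoch times, of order $O(\sqrt N)$ as in the proof of Theorem~\ref{thm:main}, indeed vanish after dividing by $N$.
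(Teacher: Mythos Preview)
Your proposal is correct and follows essentially the same route as the paper: both arguments identify the limit by induction over the at most $A+1$ epochs, using that on each stretch the active set is frozen so the increments are i.i.d., and the (triangular) strong law then pins down the rescaled epoch lengths and the successive sets $\mathcal A^k$. The only difference is in the packaging of the fluctuation control: the paper proves tightness directly (monotonicity of $S_{n,a}$ for the modulus of continuity, Markov's inequality for $J$) and then applies a Triangular SLLN piece by piece, whereas you frontload a global Doob decomposition and kill the martingale part uniformly before running the induction on the compensator. One small imprecision: Doob's $L^2$ maximal inequality by itself gives only $L^2$/in-probability control of $N^{-1}\sup_{n\le N}|M_{n,a}|$; for the almost-sure statement you want the martingale SLLN (bounded increments give $\sum_n \mathbb E[(\Delta M_n)^2]/n^2<\infty$) or Azuma--Hoeffding together with Borel--Cantelli.
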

\begin{proof}{Proof.}
Consider the sequence of random elements $\bar{S}_{N,a} (t,\omega) = N^{-1} S_{tN,a}(\omega)$, and $\bar{J}_{N} (t,\omega) = N^{-1} J_{tN}(\omega)$. We would like to show that the previous sequences are tight. By Theorem 8.3 in \cite{Billingsley1968}, a sequence of random elements $\{X_N\}$ in $\mathbb{C}[0,\infty)$ is tight iff (i) $\{X_N(0)\}$ is tight, and (ii) for all $\epsilon>0$ and $\eta>0$, there is a $\delta>0$ and an integer $N_0$ such that $\mathbb{P}\{ \sup_{t\le t^\prime\le t+\delta} |X_{N}(t^\prime) - X_{N}(t)| \ge \epsilon \} \le \eta$ for $N\ge N_0$.

The first condition is trivially satisfied for both sequences. Disregarding integrality issues, which are not important in the analysis, and using the fact that the processes are non-decreasing, we have that
$$
    \sup_{t\le t^\prime\le t+\delta} |\bar{S}_{N,a}(t^\prime) - \bar{S}_{N,a}(t)|
    \le \frac 1 N \left(S_{(t+\delta)N,a} - S_{tN,a}\right) \le \delta.
$$
Thus, by picking $\delta < \epsilon$ the second condition is satisfied for the number of impressions assigned. For the yield processes, employing Markov's inequality, and the bound $\mathbb{E}R_n^2 \le A^2 \max_a \{\mathbb{E} Q_a^2\}$ for the yield in any single period, we obtain
$$
    \frac 1 \delta \mathbb{P}\left\{ J_{(t+\delta)N} - J_{tN} \ge N\epsilon \right\}
    \le \frac 1 {\delta N^2 \epsilon^2} \mathbb{E} \Big[\sum_{n=tN}^{(t+\delta)N} R_n \Big]^2
    \le \delta \frac{ A^2 \max_a \{\mathbb{E} Q_a^2\}} {\epsilon^2},
$$
which can be bounded from above by $\eta$ by picking a small enough $\delta$, and $N>1/\delta$.


Next, we show that the fluid limit converges to the solution of the equation \eqref{fluid2}. Before proceeding we state some definitions. Let the stopping time $n_N^k = \inf\{n : S_{n,a} \ge C_a \text{ for some } a \in \mathcal{A}_N^{k-1}\}$ be the time in which the contract of the $k^\text{th}$ advertisers is fulfilled. In our previous terminology, $n_N^k$ is the  $k^\text{th}$ epoch and the beginning of the $k^\text{th}$ piece. Similarly, we let $\mathcal{A}_N^{k} = \{a \in \mathcal{A}_0 : S_{n_N^k, a} < C_a\}$ to be the set of advertisers that are active during the $k^\text{th}$ piece. The initial values are given by $n_N^0 = 1$ and $\mathcal{A}_N^{0} = \mathcal{A}_0$.

We intend to show that $N^{-1} n_N^k \rightarrow t^k$, $\mathcal{A}_N^k \rightarrow \mathcal{A}^k$, and $N^{-1} S_{n_N^k,a} \rightarrow S_a(t^k)$ as $N \rightarrow \infty$ in an almost sure sense. We proceed by induction in $k$. The base case follows trivially. Suppose that our claim holds for $k$, we intend to show that it holds for $k+1$. By the definition of the $(k+1)^\text{th}$ epoch, it should be the case that $S_{n_N^{k+1} - 1,a} < C_a$ for all $a \in \mathcal{A}_N^{k}$ and $S_{n_N^{k+1},a} \ge C_a$ for exactly one advertiser $a \in \mathcal{A}_N^{k}$. Notice that the number of impressions assigned to ad $a$ up to the stopping time can be written as
\begin{align}\label{sumofa}
    \frac {S_{n_N^{k+1},a}} N =  \frac {S_{n_N^k,a}} {N} +
    \frac {n_N^{k+1} - n_N^k} {N} \frac {\sum_{n=n_N^k+1}^{n_N^{k+1}} I_{n,a}} {n_N^{k+1} - n_N^k},
\end{align}
where the summands on the right-hand size are i.i.d.~bernoulli random variables with success probability $\mathbb{P} \Big\{ a=\mathop{\arg\max}_{a^\prime \in \mathcal{A}_N^k} \{Q_{a^\prime} - v_{a^\prime}\} \Big\}$. From the induction hypothesis, together with the Triangular Strong Law of Large Numbers, we get that \eqref{sumofa} goes to $S_a(t^k) + (\lim N^{-1} n_N^{k+1} - t^k) r_a^k$ since the success probability converges to $r_a^k$. It is not hard to see that the same limit holds for $N^{-1} S_{n_N^{k+1}-1,a}$. Thus, we have that for all advertisers yet to be satisfied it should be the case that $S_a(t^k) + (\lim N^{-1} n_N^{k+1} - t^k) r_a^k \le \rho_a$, and for at least one advertiser $S_a(t^k) + (\lim N^{-1} n_N^{k+1} - t^k) r_a^k \ge \rho_a$. Combining this expressions we get that in the limit the stopping time should satisfy
$$
    \lim \frac {n_N^{k+1}} {N} = t^k + \min_{a \in \mathcal{A}^k} \left\{ \frac {\rho_a - S_a(t_k)} {r_a^k} \right\} = t^{k+1}.
$$
In turn this implies that $N^{-1} S_{n_N^{k+1},a} \rightarrow S_a(t^{k+1})$ and $\mathcal{A}_N^{k+1} \rightarrow \mathcal{A}^{k+1}$, thus concluding the inductive step.
\end{proof}

\end{document}